\tikzset{string/.style={ultra thick}}
\tikzset{smallstring/.style={thick,scale=0.75,every node/.style={transform shape}}}
\definecolor{dark-red}{rgb}{0.7,0.25,0.25}
\definecolor{dark-blue}{rgb}{0.15,0.15,0.55}
\definecolor{medium-blue}{rgb}{0,0,0.65}
\definecolor{DarkGreen}{RGB}{0,150,0}
\newcommand{\googlebooks}[1]{(preview at \href{https://books.google.com/books?id=#1}{google books})}
\newcommand{\numdam}[1]{}
\theoremstyle{plain}
\newtheorem{prop}{Proposition}[section]
\newtheorem{thm}[prop]{Theorem}
\newtheorem{lem}[prop]{Lemma}
\newtheorem{cor}[prop]{Corollary}
\numberwithin{equation}{section}
\theoremstyle{remark}
\newtheorem{example}[prop]{Example}
\newtheorem{remark}[prop]{Remark}
\theoremstyle{definition}
\newtheorem{defn}[prop]{Definition}         
\newcommand{\sslash}{\mathbin{/\mkern-6mu/}}
\tikzstyle{mid>}=[decoration={markings, mark=at position 0.5 with {\arrow{>}}}, postaction={decorate}]
\tikzstyle{mid<}=[decoration={markings, mark=at position 0.5 with {\arrow{<}}}, postaction={decorate}]
\tikzstyle{upper>}=[decoration={markings, mark=at position 0.8 with {\arrow{>}}}, postaction={decorate}]
\tikzstyle{upper<}=[decoration={markings, mark=at position 0.8 with {\arrow{<}}}, postaction={decorate}]
\tikzstyle{lower>}=[decoration={markings, mark=at position 0.25 with {\arrow{>}}}, postaction={decorate}]
\tikzstyle{lower<}=[decoration={markings, mark=at position 0.25 with {\arrow{<}}}, postaction={decorate}]
\tikzstyle{tag}=[decoration={markings, mark=at position 0.5 with {\arrow{Rays[n=1]}}}, postaction={decorate}]
\DeclareMathOperator{\ev}{ev}
\DeclareMathOperator{\Inv}{Inv}
\newcommand{\id}{\operatorname{id}}
\newcommand{\cC}{\mathcal{C}}
\newcommand{\cF}{\mathcal{F}}
\newcommand{\cZ}{\mathcal{Z}}
\newcommand{\cH}{\mathcal{H}}
\newcommand{\cc}[1]{\mathcal{#1}}
\newcommand{\eq}{\quad = \quad}
\newcommand{\Id}{\operatorname{Id}}
\renewcommand{\Vec}{{\mathsf {Vec}}}
\newcommand{\Aut}{{\operatorname{Aut}}}
\newcommand{\ad}{{\operatorname{Ad}}}
\newcommand{\Z}[1]{\mathbb{Z}_{#1}}
\newcommand{\Ext}{{ \operatorname{Ext}_G(\mathcal{C}) }}
\newcommand{\BrPic}{{\operatorname{BrPic}}}
\newcommand{\BBrPic}{{\underline{\operatorname{BrPic}}}}
\newcommand{\BBBrPic}{{\underline{\underline{\operatorname{BrPic}}}}}
\def\semicolon{;}
\def\applytolist#1{
    \expandafter\def\csname multi#1\endcsname##1{
        \def\multiack{##1}\ifx\multiack\semicolon
            \def\next{\relax}
        \else
            \csname #1\endcsname{##1}
            \def\next{\csname multi#1\endcsname}
        \fi
        \next}
    \csname multi#1\endcsname}
\def\calc#1{\expandafter\def\csname c#1\endcsname{{\mathcal #1}}}
\def\bbc#1{\expandafter\def\csname bb#1\endcsname{{\mathbb #1}}}
\def\bfc#1{\expandafter\def\csname bf#1\endcsname{{\mathbf #1}}}
\newlength{\L@UnitsRaiseDisplaystyle}
\newlength{\L@UnitsRaiseTextstyle}
\newlength{\L@UnitsRaiseScriptstyle}
\DeclareRobustCommand*{\@UnitsNiceFrac}[3][]{%
  \ifthenelse{\boolean{mmode}}{%
    \settoheight{\L@UnitsRaiseDisplaystyle}{%
      \ensuremath{\displaystyle#1{M}}%
    }%
    \settoheight{\L@UnitsRaiseTextstyle}{%
      \ensuremath{\textstyle#1{M}}%
    }%
    \settoheight{\L@UnitsRaiseScriptstyle}{%
      \ensuremath{\scriptstyle#1{M}}%
    }%
    \settoheight{\@tempdima}{%
      \ensuremath{\scriptscriptstyle#1{M}}%
    }%
    \addtolength{\L@UnitsRaiseDisplaystyle}{%
      -\L@UnitsRaiseScriptstyle%
    }%
    \addtolength{\L@UnitsRaiseTextstyle}{%
      -\L@UnitsRaiseScriptstyle%
    }%
    \addtolength{\L@UnitsRaiseScriptstyle}{-\@tempdima}%
    \mathchoice
      {%
        \raisebox{\L@UnitsRaiseDisplaystyle}{%
          \ensuremath{\scriptstyle#1{#2}}%
        }%
      }%
      {%
        \raisebox{\L@UnitsRaiseTextstyle}{%
          \ensuremath{\scriptstyle#1{#2}}%
        }%
      }%
      {%
        \raisebox{\L@UnitsRaiseScriptstyle}{%
          \ensuremath{\scriptscriptstyle#1{#2}}%
        }%
      }%
      {%
        \raisebox{\L@UnitsRaiseScriptstyle}{%
          \ensuremath{\scriptscriptstyle#1{#2}}%
        }%
      }%
    \mkern-2mu{\sslash}\mkern-1mu%
    \bgroup
      \mathchoice
        {\scriptstyle}%
        {\scriptstyle}%
        {\scriptscriptstyle}%
        {\scriptscriptstyle}%
      #1{#3}%
    \egroup
  }%
  {%
    \settoheight{\L@UnitsRaiseTextstyle}{#1{M}}%
    \settoheight{\@tempdima}{%
      \ensuremath{%
        \mbox{\fontsize\sf@size\z@\selectfont#1{M}}%
      }%
    }%
    \addtolength{\L@UnitsRaiseTextstyle}{-\@tempdima}%
    \raisebox{\L@UnitsRaiseTextstyle}{%
      \ensuremath{%
        \mbox{\fontsize\sf@size\z@\selectfont#1{#2}}%
      }%
    }%
    \ensuremath{\mkern-2mu}{\sslash}\ensuremath{\mkern-1mu}%
    \ensuremath{%
      \mbox{\fontsize\sf@size\z@\selectfont#1{#3}}%
    }%
  }%
}
\DeclareRobustCommand*{\nicefrac}{\@UnitsNiceFrac}%
\newcommand{\highlight}[1]{\textcolor{blue}{#1}}
\author{Cain Edie-Michell}
\address{Cain Edie-Michell\\
Department of Mathematics, Vanderbilt University\\
Nashville\
USA}
\email{cain.edie-michell@vanderbilt.edu}
\title{Equivalences of Graded Fusion Categories}
\begin{document}
\begin{abstract}
We further the techniques developed by Etingof, Nikshych, and Ostrik in \cite{MR2677836} to classify the $\cC$-based equivalences between two $G$-graded extensions of $\cC$. The main result of this paper (which follows from this classification) shows that there is an action of the group $\Aut(G)\times \Aut_\otimes(\cC)$ on the set of all $G$-graded extensions of $\cC$, and further, any two extensions in the same orbit of this action are monoidally equivalent. As a warm up for the proof of our classification result we reprove the classification of graded extensions of a fusion category, making extensive use of graphical calculus. Aside from our main result, we provide several other practical applications of our classification of $\cC$-based equivalences.
\end{abstract}
\maketitle

\section{Introduction}

For any fusion category $\cC$, there is an important invariant $\BBBrPic(\cC)$, the Brauer-Picard 3-category of $\cC$. This invariant is defined as the 3-group of invertible bimodules, bimodule equivalences, and bimodule functor natural isomorphisms. The 3-category $\BBBrPic(\cC)$ has many important applications to various areas of mathematics, and thus there exists significant literature dedicated to computing $\BBBrPic(\cC)$ for various examples \cite{MR3808052, 1810.09469,MR3373393, MR3778972,MR3449240}.

The application of the Brauer-Picard 3-category we focus on for this paper is the classification of $G$-graded extensions of a fusion category $\cC$. Introducing new notation, we write $\Ext$ for the set of all $G$-graded extensions of $\cC$, up to \textit{equivalence of extensions}. In \cite{MR2677836} it is shown that $G$-graded extensions of $\cC$, up to equivalence of extensions, are classified by triples $(c,M,A)$, where 
\begin{itemize}
\item $c$ is a group homomorphism $G\to \BrPic(\cC)$, such that a certain obstruction \[o_3(c) \in H^3(G, \Inv(\cZ(\cC)))\] is trivial,
\item $M$ is an element of a certain $H^2(G, \Inv(\cZ(\cC)))$-torsor, such that a certain obstruction \[o_4(c,M) \in H^4(G, \mathbb{C}^\times)\] is trivial, and
\item $A$ is an element of a certain $H^3(G, \mathbb{C}^\times)$-torsor.
\end{itemize}
Hence there is a bijection between $\Ext$ and the set of such triples $(c,M,A)$. More detail can be found in the mentioned paper, or in Section~\ref{sec:gradedex} of this paper where we reprove this classification using graphical calculus. A detail that is often overlooked with this classification result is that categories are only classified up to equivalence of extensions. An equivalence of extensions between two $G$-graded extensions of $\cC$ is a monoidal equivalence between the two categories that restricts to the identity on $\cC$, and preserves the grading group $G$. Equivalence of extensions is a strictly weaker condition than monoidal equivalence. As the above classification is only up to equivalence of extensions, it can be less than ideal using this classification of graded extensions to prove classification results for fusion categories, where typically one wishes to classify categories up to monoidal equivalence. The Authors motivation behind this paper was exactly this problem of refining the ENO classification of $G$-graded extensions of $\cC$, to make the classification suitable for classification results for fusion categories.

This paper is an attempt to fix the issue of the classification of graded extensions over-counting the number of monoidally inequivalent categories that are $G$-graded extensions of $\cC$. Our tool to fix this problem will be to study \textit{$\cC$-based equivalences} between $G$-graded extensions of $\cC$. These are monoidal equivalences that simply map the trivial component to the trivial component. This is a significantly weaker condition than equivalence of extensions. In fact if $\cC$ is its own adjoint subcategory, then every monoidal equivalence between $G$-graded extensions of $\cC$ is $\cC$-based. A large portion of this paper will be devoted to proving Theorem~\ref{thm:mainclass}, which gives a classification of $\cC$-based equivalences between two $G$-graded extensions of $\cC$.

The main motivation behind this paper is to develop practical techniques to determine when two $G$-graded extensions of $\cC$ are monoidally equivalent. While in theory Theorem~\ref{thm:mainclass} gives a way to quotient out $\Ext$ to get the set of $G$-graded extensions of $\cC$, up to $\cC$-based equivalence, this Theorem is near impossible to work with in practice, as it requires intimate knowledge of the higher layers of the 3-group $\BBBrPic(\cC)$. The main issue being that in order to apply Theorem~\ref{thm:mainclass}, we have to show a certain obstruction living in $H^3(G, \mathbb{C}^\times)$ is trivial. As the group $H^3(G, \mathbb{C}^\times)$ is always non-trivial for $G$ non-trivial, we thus have to compute this obstruction which involves 3-morphisms in $\BBBrPic(\cC)$, and thus is near impossible to compute in practice.

In order to get a more practical way to determine when two $G$-graded extensions of $\cC$ are monoidally equivalent, we apply Theorem~\ref{thm:mainclass} to prove the following Theorem, which we regard as the main result of this paper.
\begin{thm}\label{thm:mainprac}
The group $\Aut_{\otimes}(\cC) \times \Aut(G)$ acts on the set $\text{Ext}_{G}(\cC)$ in a specified way, and any two extensions in the same orbit are monoidally equivalent.
\end{thm}
For details on the action of $\Aut_{\otimes}(\cC) \times \Aut(G)$ on $\text{Ext}_{G}(\cC)$ see Lemma~\ref{lem:Gact}. 

The power of this Theorem comes from the fact that the action of $\Aut_{\otimes}(\cC) \times \Aut(G)$ on the $c$ component of a triple $(c,M,A)$ is straightforward, and can be computed purely from knowing the structure of the group $\BrPic(\cC)$. This gives us the following Corollary.
\begin{cor}\label{cor:cor1}
In order to get a representative from each monoidal equivalence class in $\text{Ext}_{G}(\cC)$, one only has to take triples $(c,M,A)$, where $c:G \to \BrPic(\cC)$ is considered up to pre-composition by automorphisms of $G$, and post-composition by inner automorphisms of $\BrPic(\cC)$ induced by the elements in the image of the homomorphism 
\begin{align*} 
\Aut_\otimes(\cC) &\to \BrPic(\cC)   \\
\mathcal{F} &\mapsto _\cF\cC.
\end{align*}
\end{cor}

Determining how the group $\Aut_{\otimes}(\cC) \times \Aut(G)$ acts on the higher layers $M$ and $A$ is more involved, as it depends intricately on how the group $\Aut_{\otimes}(\cC) \times \Aut(G)$ acts on $c$. However, assuming some fairly restrictive conditions on $\cC$ and $G$, we can say something very computable about the action of $\Aut_{\otimes}(\cC) \times \Aut(G)$ on the $M$ component of a triple $(c,M,A)$.
\begin{cor}\label{cor:cor2}
Let $\cH \in \Aut_\otimes(\cC)$, and $\psi \in \Aut(G)$. Fix $c: G \to \BrPic(\cC)$ such that $c$ is a fixed point under the action of $\cH\times \psi$. Assume that the order of the group $\Aut_\otimes(\cC)$ is equal to some prime p, and that $p$ does not divide the order of the group $H^2(G, \Inv(\cZ(\cC)))$. Let $T \in  H^2(G,\Inv(\cZ(\cC)))$, then 
\[    (T \triangleright M)^{\cH\times \psi} = T^{\cH\times \psi} \triangleright M,\]
where $\Aut(G)$ acts on $T$ in the obvious way, and $\Aut_{\otimes}(\cC)$ acts on $T$ via the map $\Aut_\otimes(\cC) \to \BrPic(\cC)$ and then by the standard action of $\BrPic(\cC)$ on $\Inv(\cZ(\cC))$.
\end{cor}
In practice this Corollary allows to give a far better upper bound on the possible number of $G$-graded extensions, up to monoidal equivalence, realising a fixed homomorphism $c$, assuming all the above conditions are satisfied. Naively we have a bound given by 
\[ |  H^2(G,\Inv(\cZ(\cC))) | \cdot | H^3(G,\mathbb{C}^\times)|.    \]
But with an application of Corollary~\ref{cor:cor2}, this bound improves to 
\[ |  H^2(G,\Inv(\cZ(\cC))) /  (\Aut_{\otimes}(\cC) \times \Aut(G))  | \cdot | H^3(G,\mathbb{C}^\times)|.    \]
Computing the size of these sets is a straightforward exercise in group cohomology, provided we know how the group $\BrPic(\cC)$ acts on $\Inv(\cZ(\cC))$. While there are certainly a bunch more results along the lines of Corollary~\ref{cor:cor2} that could be proven, the results we have so far are sufficient for all applications the Author currently has in mind.

Our paper is structured as follows:

In Section~\ref{sec:pre} we recall the basics of $G$-graded categories, and define $\BBBrPic(\cC)$, the Brauer-Picard 3-category of a fusion category. In particular we describe a 3-dimensional graphical calculus for $\BBBrPic(\cC)$, and explain what moves we are allowed to perform in this graphical calculus. This graphical calculus will be one of the key tools for this paper.

In Section~\ref{sec:gradedex} we re-derive the ENO classification of $G$-graded extensions of a fusion category $\cC$. We include this re-derivation for two reasons. The first is that our the proof of the classification of $\cC$-based equivalences borrows many ideas from the ENO proof. Thus the re-derivation of the ENO proof gives a great warm-up to our proof of $\cC$-based equivalences. The second is that our re-derivation of the ENO classification is done via the graphical calculus for $\BBBrPic(\cC)$. While the high level arguments of the proofs are the same, we believe that interpreting these arguments in a graphical manner clarifies many of the subtleties of the ENO classification result, and hence adds to the literature.

In Section~\ref{sec:twist} we define the abstract nonsense of twisted bimodule functors. This is a straightforward generalisation of bimodule functors, where now we require the functor to preserve the bimodule action up to some action of a fixed monoidal auto-equivalence. The motivation behind such a definition is that the restriction of a $\cC$-based equivalence to a graded piece gives exactly a twisted bimodule equivalence. Unfortunately twisted bimodule functors are in practice difficult to work with, as they have no nice graphical calculus. To fix this issue we prove Theorem~\ref{lem:untwisting}, which shows that we can find all the information we need for twisted bimodule equivalences (and their natural isomorphisms), within the Brauer-Picard 3-category. The proof of this theorem is somewhat long and technical. We hide the proof in Appendix~\ref{app:proof}, which the reader can view at their own risk.

In Section~\ref{sec:data} we de-construct a $\cC$-based equivalence between two $G$-graded extensions of a fixed fusion category $\cC$. We show that from a $\cC$-based equivalence one can extract a quadruple of data $(\cF_e, \phi, \widehat{F}, \widehat{T})$. Here $\cF_e$ is a monoidal auto-equivalence of $\cC$, and $\phi$ is an automorphism of the grading group $G$. The third piece of data $\widehat{F}$ is a certain collection of bimodule equivalences, which we call a \textit{system of equivalences for $(\cF_e, \phi)$} (defined in Definition~\ref{def:syseq}). The final piece of data $\widehat{T}$ is a certain collection of bimodule natural isomorphisms, which we call a \textit{system of tensorators for $(\cF_e, \phi,\widehat{F})$} (defined in Definition~\ref{def:systen}). Conversely, given a quadruple of the above data, we show how to reconstruct the initial $\cC$-based equivalence. Hence $\cC$-based equivalences are classified by such quadruples. We remark that our classification of $\cC$-based equivalences is up to \textit{natural isomorphism of $\cC$-based equivalences} (Definition~\ref{def:basediso}).

In Section~\ref{sec:quasi} we fix $\cF_e$ and $\phi$, and classify systems of equivalences for $(\cF_e, \phi)$. We define two obstructions. The first is an equation of two group homomorphisms $G\to \BrPic(\cC)$. The second is 
\[o_2(\cF, \phi) \in H^2(G, \Inv(\cZ(\cC))). \]
We show that there exists a system of equivalences for $(\cF_e, \phi)$ if and only if the equation is satisfied and $o_2(\cF, \phi)$ is trivial. Further, if the equation is satisfied and $o_2(\cF, \phi)$ is trivial, then we show that systems of equivalences for $(\cF_e, \phi)$ form a torsor over $Z^1(G, \Inv(\cZ(\cC)))$.

In Section~\ref{sec:ten} we fix $\widehat{F}$ a system of equivalences for $(\cF_e, \phi)$, and classify systems of tensorators for $(\cF_e, \phi,\widehat{F})$. We define an obstruction 
\[o_3(\cF, \phi, \widehat{F}) \in H^3(G, \mathbb{C}^\times ) \]
and show that there exists a system of tensorators for $(\cF_e, \phi, \widehat{F})$ if and only if $o_3(\cF, \phi, \widehat{F})$ is trivial. Further, if $o_3(\cF, \phi, \widehat{F})$ is trivial, we prove that systems of equivalences for $(\cF_e, \phi,\widehat{F})$ form a torsor over $H^2(G, \mathbb{C}^\times)$.

In Section~\ref{sec:together} we tie together the results of Sections~\ref{sec:data} through \ref{sec:ten} to state and prove Theorem~\ref{thm:mainclass}, which classifies $\cC$-based equivalences between two $G$-graded extensions of a fusion category $\cC$.

We finish our paper with Section~\ref{sec:examples} by providing several practical applications of Theorem~\ref{thm:mainclass}. These are
\begin{itemize}
\item A classification of cyclic pointed fusion categories, up to monoidal equivalence,
\item A theorem to help us classify gauge auto-equivalences of a $G$-graded fusion category,
\item The proofs of Theorem~\ref{thm:mainprac}, along with Corollaries~\ref{cor:cor1} and \ref{cor:cor2},
\item A result constructing a monoidal auto-equivalence of any modular category with distinguished boson or fermion.
\end{itemize}

This paper is fairly lengthy, and several parts of this paper are independent from each other. Thus this paper can be read in several different ways. If one just wishes to see the ENO classification of $G$-graded extensions from a graphical perspective then one can read Sections~\ref{sec:pre} and \ref{sec:gradedex}. If one just wishes to see the practical applications of Theorem~\ref{thm:mainclass}, then one can read Section~\ref{sec:examples} without reading the proofs. If the reader wishes to read the entire paper, then we recommend that they read Section~\ref{sec:examples} without the proofs, and then read this paper from Section~\ref{sec:pre} till the end.

This paper overlaps with the results of Ian Marshall's thesis \cite{ianPhD}. In this thesis a classification of the kernel and image of the restriction map
\[ \Aut_\otimes(\bigoplus \cC_g) \to \Aut_\otimes(\cC_e) \]
is given. In the language of Theorem~\ref{thm:mainclass}, computing the kernel corresponds to classifying all quadruples such that $\cF = \Id_{\cC_e}$, and computing the image corresponds to characterizing when, for a fixed $\cF \in \Aut_\otimes(\cC)$, there exists a quadruple $(\cF,\phi,\widehat{F},\widehat{T})$. Our result is more general as it classifies all quadruples, and it applies to $\cC$-based equivalences between different $G$-graded extension of $\cC$, not just auto-equivalences.

\subsection*{Acknowledgments}
We would like to thank Corey Jones and Scott Morrison for many helpful conversations. This research is supported by an Australian Government Research Training Program (RTP) Scholarship. The author was partially supported by the Discovery Project 'Subfactors and symmetries' DP140100732 and 'Low dimensional categories' DP160103479. Parts of this paper were written during a visit to the Mathematical Sciences Research Institute, who we thank for their hospitality.

\section{Preliminaries}\label{sec:pre}
We refer the reader to \cite{MR3242743} for the basics of fusion categories, and bimodule categories.
\subsection*{$G$-graded fusion categories and functors}
Let $\cD$ be a fusion category, and $G$ a finite group. We say $\cD$ is $G$-graded if we can write
\[  \cD=\bigoplus \cD_g  \]
with $\cD_g$ abelian subcategories of $\cD$, such that the tensor product restricted to $\cD_g \times \cD_h$ has image in $\cD_{gh}$.

We say $\cD$ is a $G$-graded extension of $\cC$ if $\cD$ is $G$-graded, and $\cD_e = \cC$. Every fusion category $\cD$ is a $G$-graded extension of its \textit{adjoint subcategory}.
\begin{defn}
Let $\cD$ be a fusion category. The adjoint subcategory of $\cD$ is the fusion subcategory generated by the objects
\[   \{   X\otimes X^* : X \in \cD \}. \]
\end{defn}

The standard notion of equivalence of extensions between two $G$-graded extensions is given as follows.

\begin{defn}
Let $\cD_1$ and $\cD_2$ be two $G$-graded extensions of $\cC$. An equivalence of $G$-graded extensions $\cD_1 \to \cD_2$ is a monoidal functor $\mathcal{F}: \cD_1 \to \cD_2$ such that $\mathcal{F}|_{\cC} =  \Id_{\cC}$, and $\mathcal{F}$ induces the identity automorphism on the grading group $G$.
\end{defn}

An equivalence of extensions is a much weaker condition than plain monoidal equivalence. We illustrate this fact with the following example.

\begin{example}\label{ex:Z9}
Consider the fusion category $\Vec(\Z{9})$. There are two ways we can regard $\Vec(\Z{9})$ as a $\Z{3}$-graded extension of $\Vec(\Z{3})$. In both cases we have that the 0-graded piece contains the objects $\{0,3,6\}$, the 1-graded piece contains the objects $\{1,4,7\}$, and the 2-graded piece contains the objects $\{2,5,8\}$. However on one hand we can identify the $\Vec(\Z{3})$ sub-category of $\Vec(\Z{9})$ via
\[
0 \to 0, \quad 1\to 3, \quad \text {and } \quad 2 \to 6
\]
while on the other hand we can identity the $\Vec(\Z{3})$ subcategory of $\Vec(\Z{9})$ via
\[
0 \to 0, \quad 1\to 6, \quad \text {and } \quad 2 \to 3.
\]
These two identifications give $\Vec(\Z{9})$ the structure of a $\Z{3}$-graded extension of $\Vec(\Z{3})$ in two ways.

An equivalence of $\Z{3}$-graded extensions between these two extensions would have to map $3 \mapsto 6$, as this equivalence must restrict to the identity on the identified $\Vec(\Z{3})$ subcategories. But then the object $1$ would have to map to one of the objects $2$, $5$, or $8$, which does not preserve the grading group. Hence these two extensions are non-equivalent as extensions of $\Vec(\Z{3})$. However clearly $\Vec(\Z{9})$ is monoidally equivalent to itself.
\end{example}

\begin{defn}
We write $\Ext$ for the set of all $G$-graded extensions of $\cC$, up to equivalences of extensions.
\end{defn}

For this paper we introduce a stronger notion of equivalence between $G$-graded extensions.
\begin{defn}\label{def:basedeq}
Let $\cD_1$ and $\cD_2$ be two $G$-graded extensions of $\cC$. A $\cC$-based equivalence $\cD_1 \to \cD_2$ is a monoidal equivalence $\mathcal{F}: \cD_1 \to \cD_2$ such that $\cF(\cC) \simeq \cC$.
\end{defn}
It is straightforward to see that an equivalence of $G$-graded extensions of $\cC$ is also a $\cC$-based equivalence. However the converse is not true. For example, there exists a $\Vec(\Z{3})$-based equivalence between the two different $\Z{3}$-graded extensions constructed in Example~\ref{ex:Z9}. Thus $\cC$-based equivalence is a strictly weaker condition than equivalence of extensions. In fact if a fusion category $\cC$ is equivalent to its own adjoint subcategory, then every monoidal equivalence between $G$-graded extensions of $\cC$ is a $\cC$-based equivalence.

We also have the several different notions of natural isomorphisms between $\cC$-based equivalences. The following notion we will use for this paper.

 \begin{defn}\label{def:basediso}
Let $\cD_1$ and $\cD_2$ be two $G$-graded extensions of $\cC$, and $\cF_1, \cF_2 : \cD_1 \to \cD_2$ be $\cC$-based equivalences. We say $\cF_1, \cF_2$ are naturally isomorphic as $\cC$-based equivalences if 
 \[ \cF_1|_\cC = \cF_2|_\cC,   \]
 and there exists a monoidal natural isomorphism $\mu : \cF_1 \to  \cF_2$ such that 
 \[  \mu|_{\cC} = \id_{\cF_1|_\cC }   \]
\end{defn}

\subsection*{Quasi-monoidal categories, functors, and natural transformations}
Crucial to our main classification result will be the intermediate classification of graded quasi-monoidal equivalences. We briefly define the basic details of quasi-monoidal fusion categories, functors, and natural transformations. 

A \textit{quasi-monoidal fusion category} is defined exactly the same as a fusion category, except there is no data of an associator, we only require the existence (but not choice) of a natural isomorphism $(X\otimes Y) \otimes Z \to X\otimes (Y\otimes Z)$.

A \textit{quasi-monoidal functor} $\cF$ between two quasi-monoidal categories is a functor that admits a natural isomorphism $\cF(X) \otimes \cF(Y)\to \cF(X\otimes Y)$. Note that the information of quasi-monoidal functor is just the functor itself, and not the choice of this natural isomorphism.

A \textit{quasi-monoidal natural transformation} between quasi-monoidal functors is simply a natural transformation with no extra conditions.

Clearly every monoidal category, functor and natural transformation is also quasi, via forgetting. 

\subsection*{The 3-category of invertible bimodules over a fusion category}

This subsection recalls the information of $\BBBrPic(\cC)$, the 3-category of invertible bimodules over a fusion category $\cC$. More details can be found in the papers \cite{MR2678824,MR2677836}. 

We have in $\BBBrPic(\cC)$ that morphisms are given by
\begin{itemize}
\item 0-morphism : The category $\cC$
\item 1-morphisms $\cC\to \cC$ : Invertible $\cC$-bimodules $\mathcal{M}$
\item 2-morphisms $\mathcal{M} \to \mathcal{N}$ : Bimodule equivalences $\cF$
\item 3-morphisms $\cF \to \mathcal{G}$ : Natural isomorphisms of bimodule functors $\mu$
\end{itemize}

Composition at the 2-level is the composition $\circ$ of bimodule functors, and composition at the 3-level is the vertical composition $\cdot$ of natural transformations. Composition at the 1-level is more complicated, and is given by $\boxtimes$, the so called relative tensor product over $\cC$. 

\begin{remark}
It is very important to note that we are using very non-standard notation for the Deligne product of bimodules and for the relative tensor product over $\cC$. Typical notation is to write $\boxtimes$ for the Deligne product, and $\boxtimes_\cC$ for the relative tensor product over $\cC$. Due to the sheer number of relative tensor products in this paper, we were forced to drop the standard notation, as it made many equations unreadable. Instead we write $\times$ for the Deligne product, and $\boxtimes$ for the relavtive tensor product over $\cC$.
\end{remark}

To define this relative tensor product we first define the \textit{balancing category}, as was done in \cite{MR2678824}. Let $\cc{M}_1,\cc{M}_2$, and $\cc{N}$ be $\cC$-bimodules, then we define $\operatorname{Fun}^\text{bal}(\cc{M}_1\times \cc{M}_2 \to \cc{N})$.

The objects of $\operatorname{Fun}^\text{bal}(\cc{M}_1\times \cc{M}_2 \to \cc{N})$ are bimodule functors

\[ \cc{F}: \cc{M}_1\times \cc{M}_2 \to \cc{N},\]

 along with a collection of balancing isomorphisms 

\[b^\cc{F}_{m_1,c,m_2} : \cc{F}(m_1 \triangleleft c \times m_2) \to \cc{F}(m_1, c \triangleright m_2),\] 
satisfying the consistency condition:

 \begin{tikzpicture}[baseline= (a).base]
\node[scale=.8] (a) at (0,0){
 \begin{tikzcd}[column sep = .1cm, row sep = 3cm]
 & \cF(m_1 \triangleleft c_1 \triangleleft c_2 \times m_2)  \arrow[dl, "\cF(r^{\cM_1}_{m_1,c_1,c_2} \times \id_{m_2})"]  \arrow[rr, "b^\cF_{m_1\triangleleft c_1,c_2,m_2}"] & &\cF(m_1 \triangleleft c_1 \times c_2 \triangleright m_2)\arrow[dr, "b^\cF_{m_1,c_1,c_2\triangleright m_2}"]  \\
\cF(m_1 \triangleleft( c_1 \otimes c_2) \times m_2) \arrow[rr, "b^\cF_{m_1,c_1\otimes c_2,m_2}"]& & \cF(m_1  \times ( c_1 \otimes c_2) \triangleright m_2)\arrow[rr, "\cF(\id_{m_1} \times {l^{M_2}_{c_1,c_2,m_2}}^{-1})"] & & \cF(m_1 \times c_1\triangleright c_2 \triangleright m_2)
\end{tikzcd}   };
\end{tikzpicture}
We call such a pair $(\cc{F},b^\cc{F})$ a $\cc{C}$-balanced bimodule functor.

The morphisms $(\cc{F},b^\cc{F}) \to (\cc{G},b^\cc{G})$ of $\operatorname{Fun}^\text{bal}(\cc{M}_1\times \cc{M}_2 \to \cc{N})$ are natural transformations of bimodule functors $\mu: \cc{F} \to \cc{G}$ that commute with the balancing morphisms in the following sense:

\begin{center}\begin{tikzcd}[row sep = 1cm, column sep = 1.5cm]
\cF(m_1 \triangleleft c \times m_2) \arrow[ d, "b^\cF_{m_1,c,m_2}"]  \arrow[r , "\mu_{ m \triangleleft c \times m_2}" ]& \cc{G}(m_1 \triangleleft c \times m_2)\arrow[ d, "b^\cc{G}_{m_1,c,m_2}"] \\
\cF(m_1  \times c \triangleright m_2) \arrow[r , "\mu_{ m  \times c\triangleright m_2}" ]& \cc{G}(m_1 \times c \triangleright m_2)
\end{tikzcd}\end{center}
We call such a morphism $\mu : (\cc{F},b^\cc{F}) \to (\cc{G},b^\cc{G})$ a $\cC$-balanced natural transformation.

With the category $\operatorname{Fun}^\text{bal}(\cM_1\times \cM_2 \to \cc{N})$ in hand, we can now define the composition of bimodules in the 3-category $\BBBrPic(\cC)$. 

\begin{defn}
Let $\cM_1$ and $\cM_2$ be bimodule categories. The relative tensor product of $\cM_1$ and $\cM_2$, which we denote by $\cM_1 \boxtimes \cM_2$, is the unique bimodule category inducing, for every bimodule category $\cc{N}$, an equivalence of categories:
\[ \operatorname{Fun}^\text{bal}(\cc{M}_1\times \cM_2 \to \cc{N}) \simeq \operatorname{Fun}(\cM_1 \boxtimes \cM_2 \to \cc{N}). \]
\end{defn}

From the above definition it isn't clear that such a category $\cM_1 \boxtimes \cM_2$ exists. In \cite{MR2677836} it is shown that the category $\cM_1 \boxtimes \cM_2$ can be explicitly realised as $\cZ_\cC(\cM_1 \times \cM_2)$, the relative Drinfeld centre of $\cM_1 \times \cM_2$.

Towards generalising the above definition of the relative tensor product of bimodules over $\cC$, we can define $\operatorname{Fun}^\text{bal}(\cM_1 \times \hdots \times \cM_n\to \cc{N})$, the category of bimodule functors, balanced in each position, in a similar fashion. Here we now have objects a bimodule functor, along with $n-1$ balancing isomorphisms that are consistent between each other. We require natural transformations in this category to commute with each balancing isomorphism.

The same way as we defined the relative tensor product $\cM_1 \boxtimes \cM_2$, we can define the $n$-fold relative tensor product as the unique bimodule category $\cM_1 \boxtimes \hdots \boxtimes \cM_n$ inducing an equivalence:
\begin{equation}\label{eq:funball}
\operatorname{Fun}^\text{bal}(\cM_1 \times \hdots \times \cM_n\to \cc{N}) \simeq \operatorname{Fun}(\cM_1 \boxtimes \hdots \boxtimes \cM_n\to \cc{N}).
\end{equation}

An important class of 1-morphisms in the category $\BBBrPic(\cC)$ are given by the following.

\begin{defn}\label{def:righttwist}
 Let $(\cF,\tau)$ be a monoidal auto-equivalence of $\cC$. We define the bimodule $\cC_\cF$, which is just $\cC$ as an abelian category, with actions are given by
 \[    X \triangleright Y \triangleleft Z := X\otimes Y \otimes \cF(Z).\]
The left and central bimodule structure maps are identities, and the right bimodule structure maps are given by $\tau$, the tensor structure map for $\cF$.
 
 Similarly we can define the bimodule $_\cF\cC$, which is again just $\cC$ as an abelian category, but with actions now given by
 \[    X \triangleright Y \triangleleft Z := \cF(X) \otimes Y \otimes Z,\]
Now the central and right bimodule structure maps are identities, and the left bimodule structure map is given by $\tau$.
 \end{defn}
 
It is shown in \cite[Proposition 3.1.]{MR3210925} that $\cC \simeq \cC_\cF$ and $\cC \simeq _\cF\cC$ if and only if $\cF$ is an inner auto-equivalence of $\cC$. Thus the two maps 
\begin{align*} 
\operatorname{Out}_\otimes(\cC) &\to \BrPic(\cC) \\
\cF &\mapsto   \cC_\cF, \\
\cF &\mapsto   _\cF\cC
 \end{align*}
 are injections. The latter is a homomorphism of groups, while the former is an anti-homomorphism.

\subsection*{Graphical calculus for $\BBBrPic(\cC)$}

As $\BBBrPic(\cC)$ is a 3-category, we have a 3-dimensional graphical calculus. While the results of \cite{1312.7188} show that $\BBBrPic(\cC)$ is in fact a rigid category allowing us a fully fledged 3-dimensional rigid graphical calculus, this rigid graphical calculus turns out to be not so useful for this paper. 

Instead we use a much weaker plain 3-category graphical calculus. We pick a coordinate system:
\[\raisebox{-.5\height}{ \includegraphics[scale = .45]{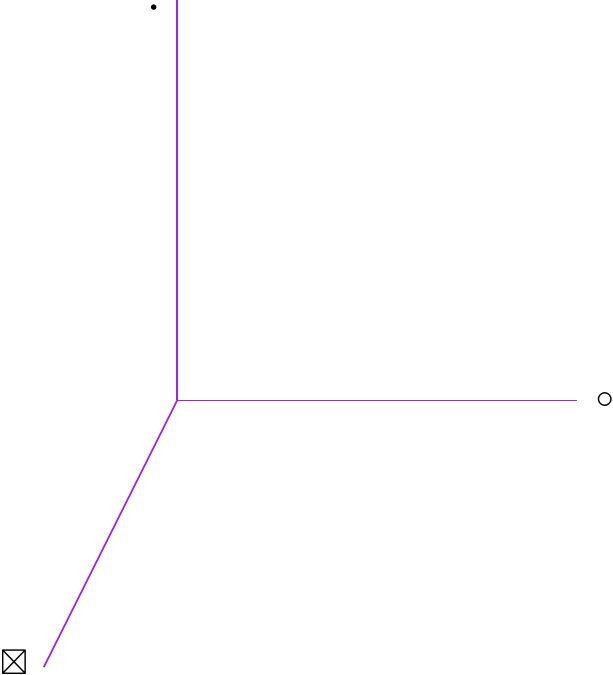}}\]
Recall that in a 3-category we have the interchange law, which allows us to commute suitably disjoint 3-morphisms. In our graphical calculus this interchange allows us isotopy that preserves orientation of 3-morphisms with respect to these axis. For example we have the relation:
\[\raisebox{-.5\height}{ \includegraphics[scale = .45]{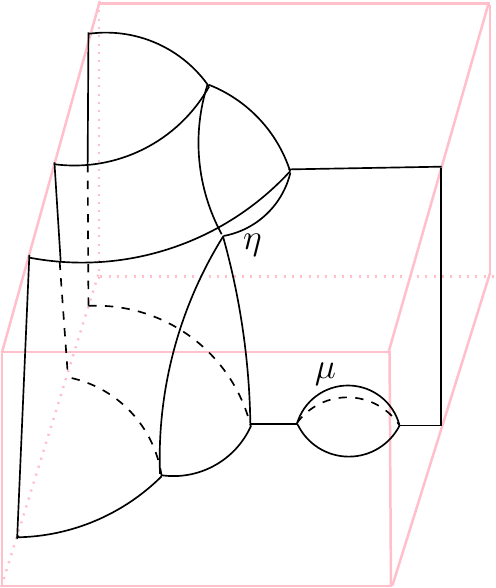}} \eq  \raisebox{-.5\height}{ \includegraphics[scale = .45]{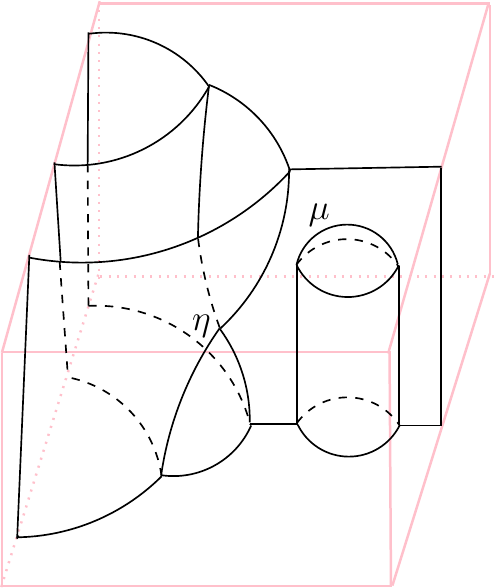}} \]
as the orientations of the 3-morphisms $\eta$ and $\mu$ are not changed. We are not allowed to perform the isotopy:
\[\raisebox{-.5\height}{ \includegraphics[scale = .45]{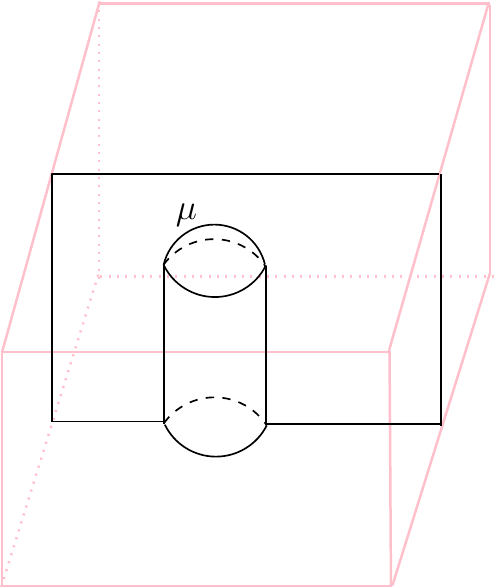}} \eq  \raisebox{-.5\height}{ \includegraphics[scale = .45]{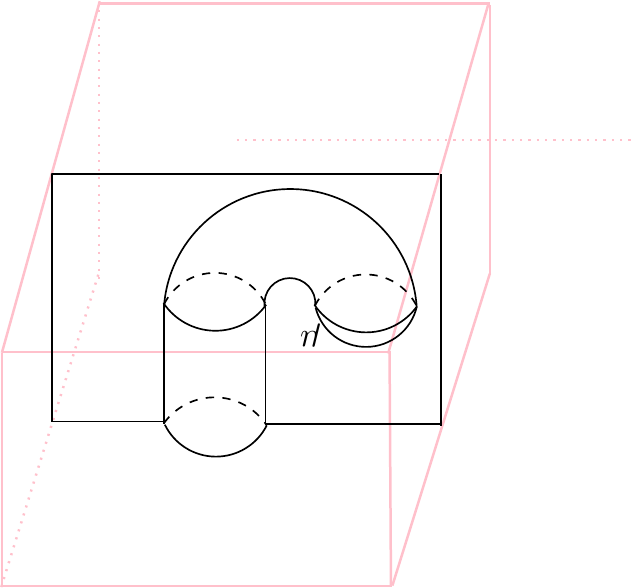}} \]
as it rotates the 3-morphism $\mu$.
The orientation preserving isotopies coming from the interchange law will be the only moves we allow ourselves in the 3-category $\BBBrPic(\cC)$.


We can truncate $\BBBrPic(\cC)$ to a 2-category $\BBrPic(\cC)$ by collapsing natural isomorphisms to identities. Thus we also have a 2-dimensional calculus for $\BBrPic(\cC)$. Again we don't use the fact that $\BBrPic(\cC)$ is rigid, and just use the plain 2-dimensional graphical calculus. We pick a coordinate system:
\[\raisebox{-.5\height}{ \includegraphics[scale = .45]{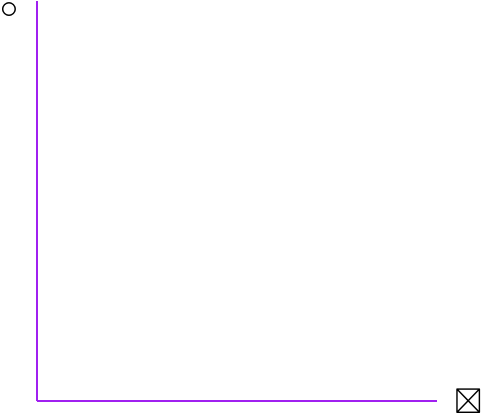}}\]
and again only allow isotopy that preserves orientation of 2-morphisms with respect to these axis. Hence we are allowed to perform moves such as
\[\raisebox{-.5\height}{ \includegraphics[scale = .65]{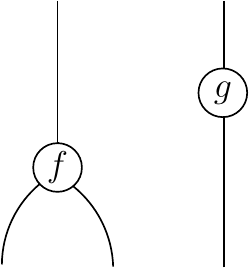}} \eq  \raisebox{-.5\height}{ \includegraphics[scale = .65]{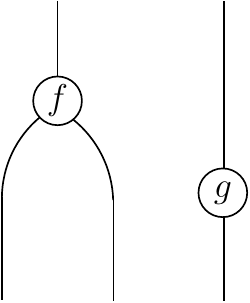}} \]
but not moves such as
\[\raisebox{-.5\height}{ \includegraphics[scale = .65]{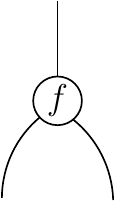}} \eq  \raisebox{-.5\height}{ \includegraphics[scale = .65]{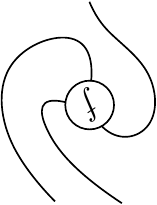}} \]

We will make heavy use of these graphical calculi in this paper. To help with computations we introduce some distinguished morphisms and relations between them.

Let's start with the 2-category $\BBrPic(\cC)$. Let $\cM$ be an invertible $\cC$ bimodule. As $\cM$ is invertible, there exists a bimodule $\cM^{op}$, and (non-unique) bimodule equivalences::
\[    \operatorname{L}_\cM : \cM^{op} \boxtimes \cM \to \cC \quad \text{ and } \quad \operatorname{R}_\cM : \cM \boxtimes \cM^{op} \to \cC\] 
These bimodule equivalences give us 2-morphisms in the category $\BBrPic(\cC)$. Graphically we draw these 2-morphisms as:
\[ \raisebox{-.5\height}{ \includegraphics[scale = .45]{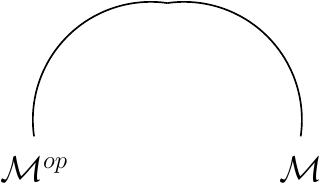}}\quad  \text{ and } \quad \raisebox{-.5\height}{ \includegraphics[scale = .45]{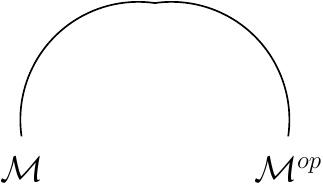}} \]
respectively.
As the morphisms $\operatorname{L}_\cM$ and $\operatorname{R}_\cM$ are equivalences, we also have their inverses
\[  \operatorname{L}_\cM^{-1} : \cC \to \cM^{op} \boxtimes \cM  \quad \text{ and } \quad 	 \operatorname{R}_\cM^{-1} : \cC \to \cM \boxtimes \cM^{op},\]
which we draw as 
\[ \raisebox{-.5\height}{ \includegraphics[scale = .45]{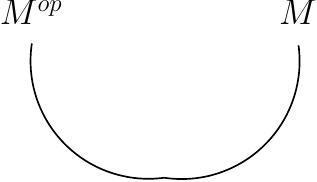}} \quad \text{ and }  \quad \raisebox{-.5\height}{ \includegraphics[scale = .45]{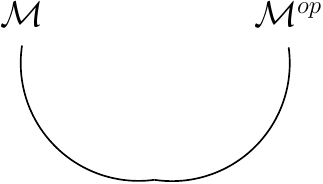}} \]
respectively.

These distinguished 2-morphisms satisfy the relations 
\[ \operatorname{L}_\cM \circ \operatorname{L}_\cM^{-1} = \Id_\cC = \operatorname{R}_\cM \circ \operatorname{R}_\cM^{-1},\] 
\[ \operatorname{L}_\cM^{-1} \circ \operatorname{L}_\cM = \Id_{\cM^\text{op}} \boxtimes \Id_{\cM},\]
and 
\[ \operatorname{R}_\cM^{-1} \circ \operatorname{R}_\cM = \Id_{\cM} \boxtimes \Id_{\cM^\text{op}}.\]
 Hence we get the graphical relations:
\[ \raisebox{-.5\height}{ \includegraphics[scale = .45]{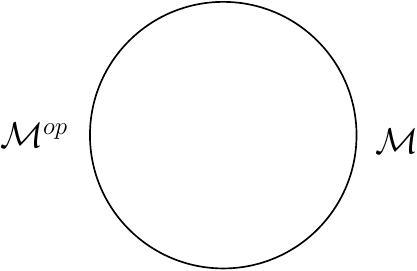}}\quad  = \quad  1 \quad  = \quad \raisebox{-.5\height}{ \includegraphics[scale = .45]{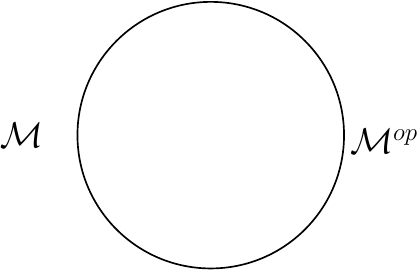}}, \]
\vspace{1em}
\[       \raisebox{-.5\height}{ \includegraphics[scale = .45]{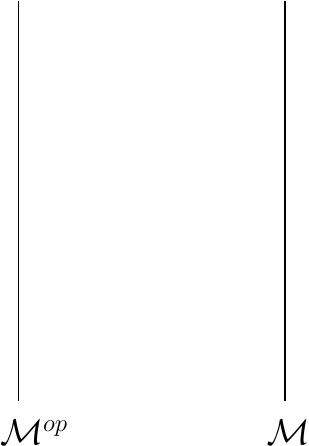}} \quad = \quad \raisebox{-.5\height}{ \includegraphics[scale = .45]{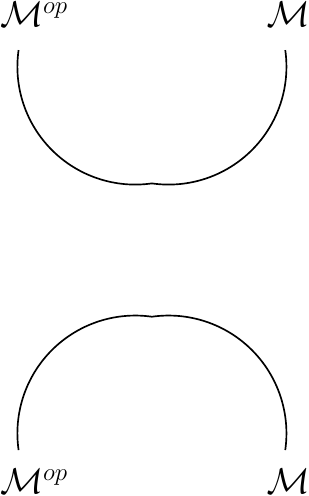}} \quad \text{ and }\quad  \raisebox{-.5\height}{ \includegraphics[scale = .45]{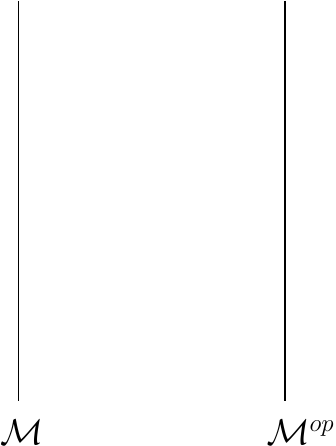}} \quad = \quad \raisebox{-.5\height}{ \includegraphics[scale = .45]{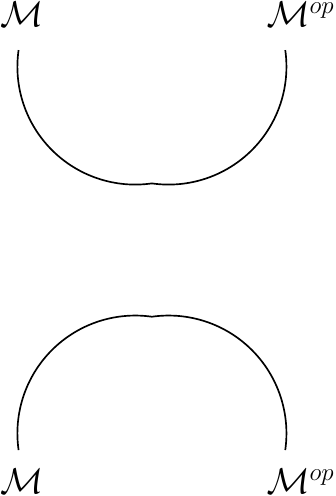}}\]
Note that we do not have graphical relations to straighten out zig-zags in this graphical calculus. That is
\[   \raisebox{-.5\height}{ \includegraphics[scale = .45]{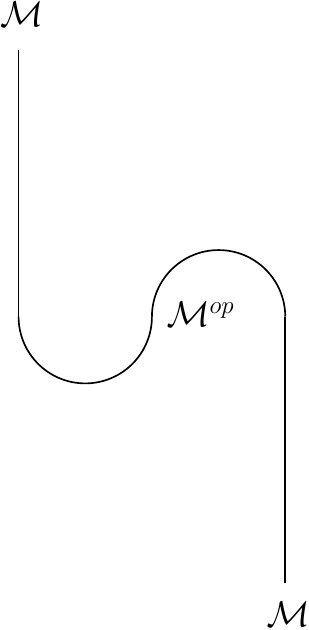}}  \quad  \neq \quad  \raisebox{-.5\height}{ \includegraphics[scale = .45]{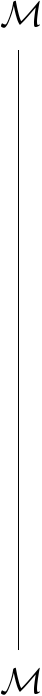}}\]

However as a substitute we do have the following relations which are of key importance for this paper. 
\begin{lem}\label{lem:anue}
Let $\cM$ an invertible $\cC$-bimodule, and $\mathcal{F}$ a bimodule auto-equivalence of $\cM$. We have the following relations of 2-morphisms in $\BBrPic(\cC)$:
\[        \raisebox{-.5\height}{ \includegraphics[scale = .45]{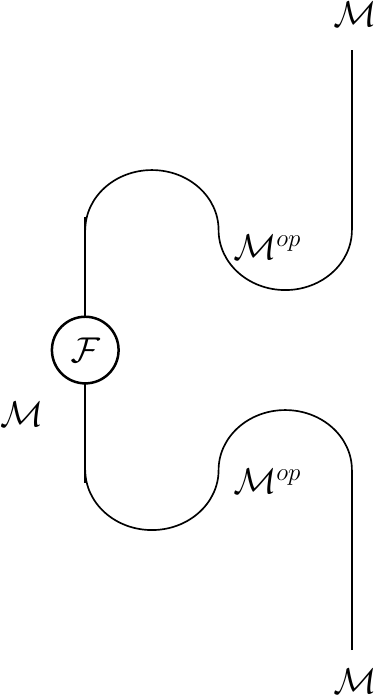}} \eq \raisebox{-.5\height}{ \includegraphics[scale = .45]{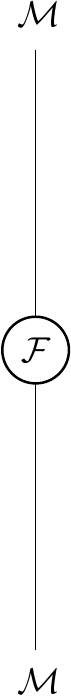}}  \eq \raisebox{-.5\height}{ \includegraphics[scale = .45]{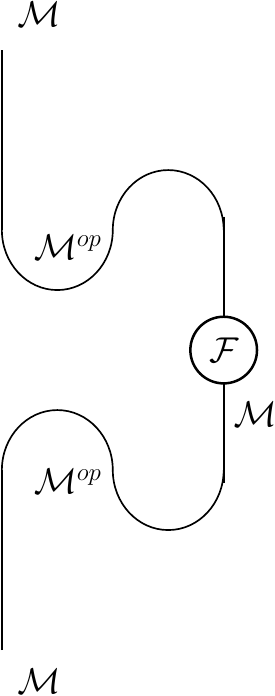}}  \]
\end{lem}
\begin{proof}
The first relations holds as:
\[        \raisebox{-.5\height}{ \includegraphics[scale = .45]{anue}} \eq \raisebox{-.5\height}{ \includegraphics[scale = .45]{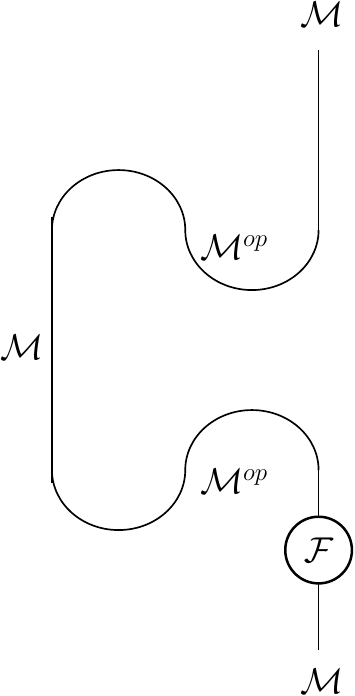}} \eq    \raisebox{-.5\height}{ \includegraphics[scale = .45]{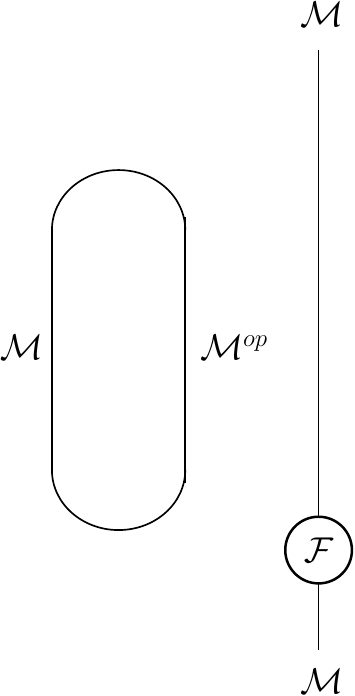}} \quad  = \quad \raisebox{-.5\height}{ \includegraphics[scale = .45]{idM2}},\]
where the first equality follows as $\Aut(\cM)$ is abelian, the second equality follows by recoupling the adjacent cups and caps, and the third equality is from popping the bubble.

The proof of the second relation is near identical.
\end{proof}
%
These distinguished morphisms in $\BBrPic(\cC)$, along with the relations we have described above are sufficient to prove everything regarding $\BBrPic(\cC)$ in this paper. 

We now focus on the 3-dimensional graphical calculus for $\BBBrPic(\cC)$. As with the 2-dimensional calculus for $\BBrPic(\cC)$ we pick out some distinguished 3-morphisms, and give graphical interpretations of the relations they satisfy.

Let $\mathcal{F}:\cM_1\boxtimes \cM_2 \to \cc{N}$ be a bimodule equivalence, then there exists a bimodule equivalence $\cF^{-1} : \cc{N} \to \cM_1\boxtimes \cM_2$ along with (non-unique) natural isomorphisms:
\[ \operatorname{l}_\cF : \cF^{-1}\circ \cF \to \Id_{\cM_1} \boxtimes \Id_{\cM_2} \quad \text{ and }\quad   \operatorname{r}_\cF : \cF\circ \cF^{-1} \to \Id_{\cc{N}}.\]

Graphically we draw these natural isomorphisms as:
\[ \raisebox{-.5\height}{ \includegraphics[scale = .45]{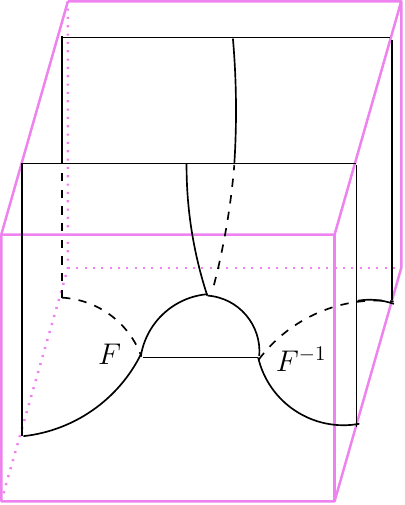}} \qquad  \text{ and } \qquad \raisebox{-.5\height}{ \includegraphics[scale = .45]{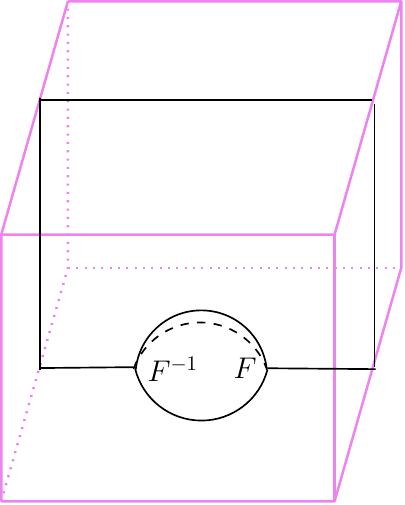}} \]
respectively.
As $\operatorname{l}_\cF$ and $\operatorname{r}_\cF$ are isomorphisms, they have inverses 
\[ \operatorname{l}^{-1}_\cF : \Id_{\cM_1}\boxtimes \Id_{\cM_2} \to  \cF^{-1}\circ \cF \quad \text{ and }\quad   \operatorname{r}^{-1}_\cF :  \Id_{\cc{N}}\to  \cF\circ \cF^{-1},\]
which we draw as:
\[ \raisebox{-.5\height}{ \includegraphics[scale = .45]{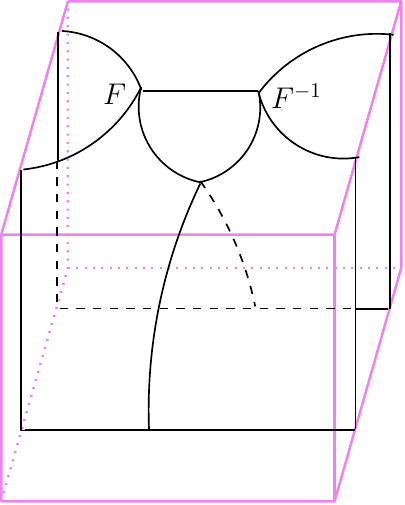}} \qquad  \text{ and } \qquad \raisebox{-.5\height}{ \includegraphics[scale = .45]{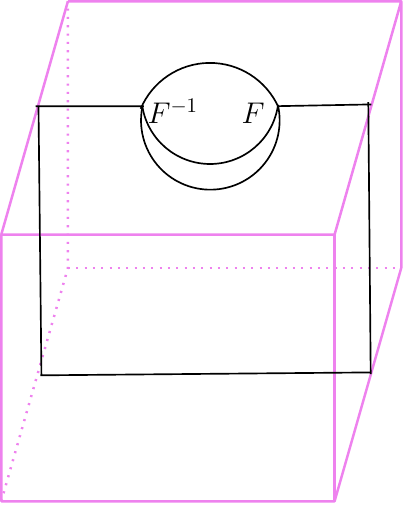}} \]

These distinguished 3-morphisms satisfy the relations 
\begin{align*}
 \operatorname{l}_\cF \circ \operatorname{l}_{\cF}^{-1} &= \Id_{\cF^{-1}} \circ \id_\cF   \qquad \qquad \operatorname{l}^{-1}_\cF \circ \operatorname{l}_{\cF} = \id_{\Id_{\cM_1}} \boxtimes \id_{\Id_{\cM_2}}\\
\operatorname{r}_\cF \circ \operatorname{r}_{\cF}^{-1} &=\Id_{\cF} \circ \id_{\cF^{-1}}  \qquad \qquad \operatorname{r}^{-1}_\cF \circ \operatorname{r}_{\cF} = \id_{\Id_{\cc{N}}}.
\end{align*}
We leave it to the reader to draw the resulting graphical relations we get from the four equations above. These general relations will not be so important for this paper. What is important are the specialisations of $\mathcal{F}$ to the bimodule equivalences $L_\cM: \cM^{op} \boxtimes \cM \to \cC$ and $R_{\cM}: \cM \boxtimes \cM^{op}\to \cC$. For these cases we get the following graphical relations (here $X$ can be either $L_\cM$ or $R_\cM$):
\[ \raisebox{-.5\height}{ \includegraphics[scale = .45]{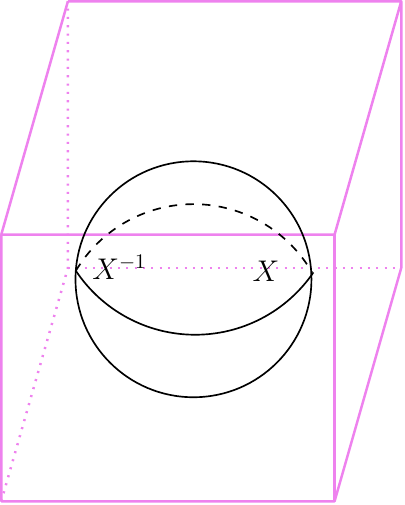}} \eq 1  \qquad , \qquad  \raisebox{-.5\height}{ \includegraphics[scale = .45]{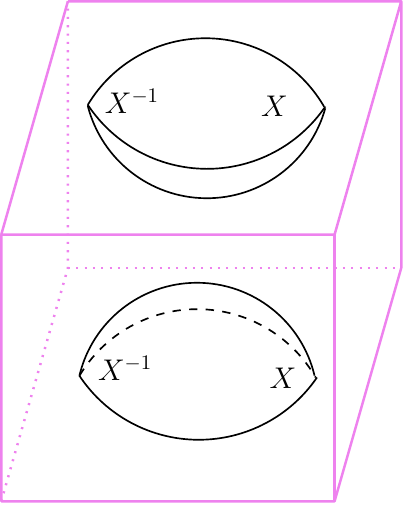}} \quad = \quad \raisebox{-.5\height}{ \includegraphics[scale = .45]{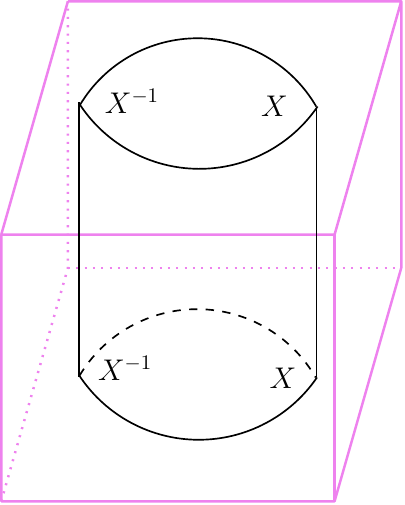}},\]
\[\raisebox{-.5\height}{ \includegraphics[scale = .45]{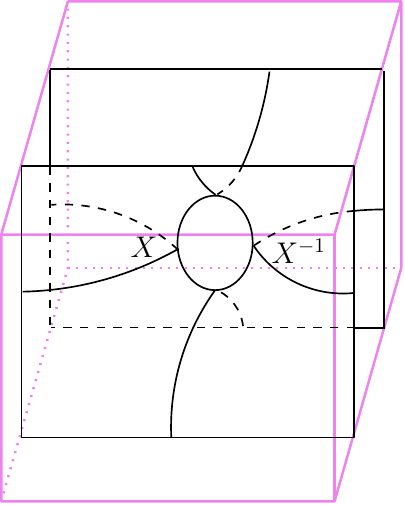}}  \quad=\quad  \raisebox{-.5\height}{ \includegraphics[scale = .45]{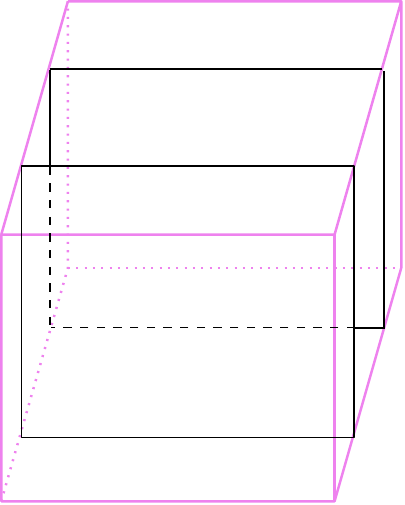}} \qquad \text{and}\qquad \raisebox{-.5\height}{ \includegraphics[scale = .45]{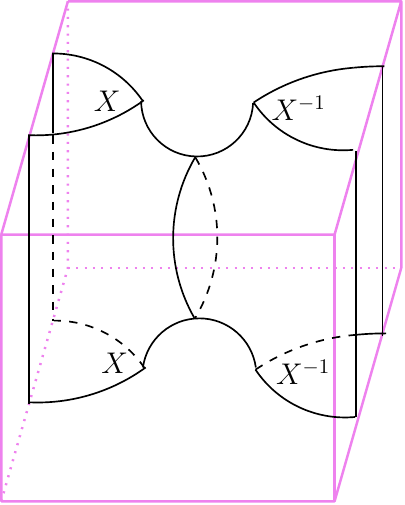}}  \quad=\quad  \raisebox{-.5\height}{ \includegraphics[scale = .45]{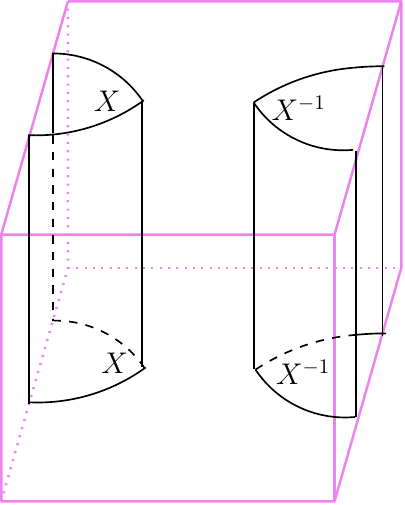}}. \]

These distinguished 3-morphisms and relations will be sufficient for this paper to prove everything regarding $\BBBrPic(\cC)$ we need for this paper.

\subsection*{An action of $\operatorname{BrPic}(\cC)$ on $\Inv(\cZ(\cC))$}
It is shown in \cite{MR2677836} that the group of invertible elements of the centre of $\cC$ and the group of bimodule auto-equivalences of $\cC$ are isomorphic. This isomorphism sends the invertible central object $(X,\gamma_{X,-})$ to the bimodule auto-equivalence $ X\triangleright ?$. The left bimodule structure map is given by $\gamma_{X,-}$, and the right bimodule structure map is trivial. We will implicitly use this isomorphism throughout this paper, regarding elements of $\Inv(\cZ(\cC))$ as auto-equivalences of the trivial bimodule $\cC$, and vice versa.

Given an invertible bimodule $\cM$, and $z\in \Inv(\cZ(\cC))$, we can define

\[   z^{\cM} := \operatorname{R}_\cM \circ ( \id_\cM \boxtimes z \boxtimes \id_{\cM^{op}}) \circ \operatorname{R}_{\cM}^{-1}: \cC \to \cC,  \]
a new element of $\Inv(\cZ(\cC))$. Using the graphical calculus for $\BBrPic(\cC)$ we draw:
\[z^{\cM} = \raisebox{-.5\height}{ \includegraphics[scale = .7]{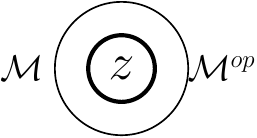}}\]

It is easy to check using the graphical relations that this determines an action of the group $\BrPic(\cC)$ on the group $\Inv(\cZ(\cC))$. In fact this is exactly the action defined in \cite{MR2677836}, which they prove is independent of the choice of equivalence $\operatorname{R}_\cM: \cM \boxtimes \cM^{op} \to \cC$. The following Lemma about this action is repeatedly used throughout this paper.

\begin{lem}\label{lem:lact}
Let $z \in \Inv(\cZ(\cC))$ and $\mathcal{M}$ an invertible $\cC$-bimodule. Then we have the following relation in $\BBrPic(\cC)$: 
\[\raisebox{-.5\height}{ \includegraphics[scale = .7]{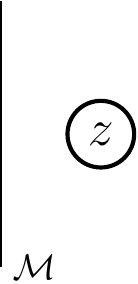}} \eq  \raisebox{-.5\height}{ \includegraphics[scale = .7]{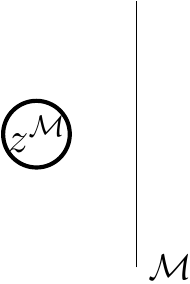}}\]
\end{lem}
\begin{proof}
We directly compute using the string recoupling and bubble popping relations:

\[\raisebox{-.5\height}{ \includegraphics[scale = .7]{actionlem0}} \eq \raisebox{-.5\height}{ \includegraphics[scale = .7]{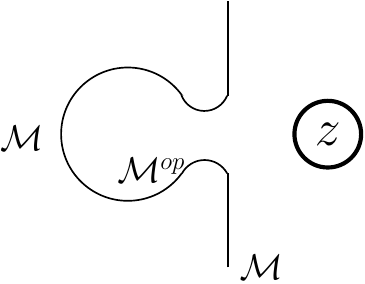}}  \eq \raisebox{-.5\height}{ \includegraphics[scale = .7]{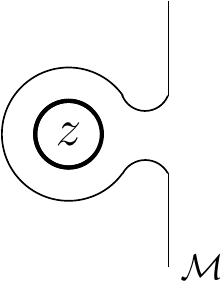}}\eq \raisebox{-.5\height}{ \includegraphics[scale = .7]{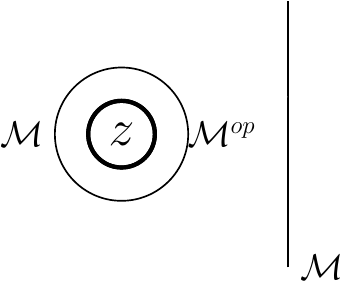}} \] 
The first equality is Lemma~\ref{lem:anue}, the second is from an isotopy, the third is from recoupling the adjacent cup and caps. 
\end{proof}
In fact the object $z^\cM$ can be defined as the unique object of $\Inv(\cZ(\cC))$ such that Lemma~\ref{lem:lact} holds.

\section{Classification of graded extensions}\label{sec:gradedex}

Let $\cC$ be a fusion category, and $G$ a finite group. Recall from \cite{MR2677836} that $\Ext$, the set of all $G$-graded extensions of $\cC$ (up to equivalence of extensions) is in bijection with the set of triples $(c,M,A)$, where: 
\begin{itemize}
\item $c: G \to \BrPic(\cC)$ is a group homomorphism such that a certain element 
\[o_3(c) \in H^3(G,\Inv(\cZ(\cC)))\]
 is trivial,
\item $M$ is an element of a $H^2(G,\Inv(\cZ(\cC)))$-torsor, such that a certain element 
\[o_4(c,M) \in H^3(G,\Inv(\cZ(\cC)))\]
 is trivial,
\item $A$ is an element of an $H^3(G,\mathbb{C}^\times)$-torsor.
\end{itemize}
The proof of our classification of $\cC$-based equivalences makes use of the techniques developed in the proof of this classification of $G$-graded extensions. To warm the reader up to these techniques, we quickly reprove this classification of Etingof, Nikshyck, and Ostrik, putting an emphasis on string diagrams.

Let $c :G \to \BrPic(\cC)$ be a group homomorphism. Then we can form the $\cC$-bimodule category
\[ \cc{D} := \bigoplus c_g. \]
We wish to make $\cD$ a graded quasi-monoidal category. Recall this is a category with a tensor product satisfying $(X\otimes Y) \otimes Z \cong X \otimes (Y\otimes Z)$ via an unspecified isomorphism. As $c$ is a homomorphism there exists a collection of bimodule equivalences:
\[ M:= \{ M_{g,h}: c_g \boxtimes c_h \to c_{gh}\} = \left\{  \raisebox{-.5\height}{ \includegraphics[scale = .6]{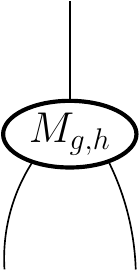}} \right \}  .\] 

Via Equation~\eqref{eq:funball} the equivalences $M_{g,h}$ induce $\cC$-balanced bimodule functors $m_{g,h}: c_g \times c_h \to c_{gh}$. We define a tensor product on $\cD$ by:
\[    X_g \otimes Y_h := m_{g,h}(X_g,Y_h). \]
This tensor product on $\cc{D}$ very much depends on the choice of $M$.

For the category $\cD$ with tensor product coming from the collection $M$ to be quasi-monoidal we exactly need $\cC$-balanced natural isomorphisms of bimodule functors: 
\[m_{fg,h}\circ(m_{f,g} \times \id_{c_h}) \cong m_{f,gh}\circ(\id_{c_f} \times m_{g,h}), \]
which via Equation~\eqref{eq:funball} is equivalent to the existence of bimodule natural isomorphisms:
\[M_{fg,h}\circ(M_{f,g} \boxtimes \id_{c_h}) \cong M_{f,gh}\circ(\id_{c_f} \boxtimes M_{g,h}). \]
To help us determine when we have such bimodule isomorphisms we define 
\[ T(c,M)_{f,g,h} :=\operatorname{R}_{c_{fgh}}\circ ( [M_{fg,h}\circ(M_{f,g} \boxtimes \id_{c_h}) \circ(\id_{c_f} \boxtimes M_{g,h}^{-1}) \circ M_{f,gh}^{-1}]\boxtimes \Id_{c_{fgh}^{op}})\circ \operatorname{R}^{-1}_{c_{fgh}}: G\times G \times G \to \Inv(\cZ(\cC)),\]
which we draw graphically as 
\begin{center} $T(c,M)_{f,g,h} =\raisebox{-.5\height}{ \includegraphics[scale = .6]{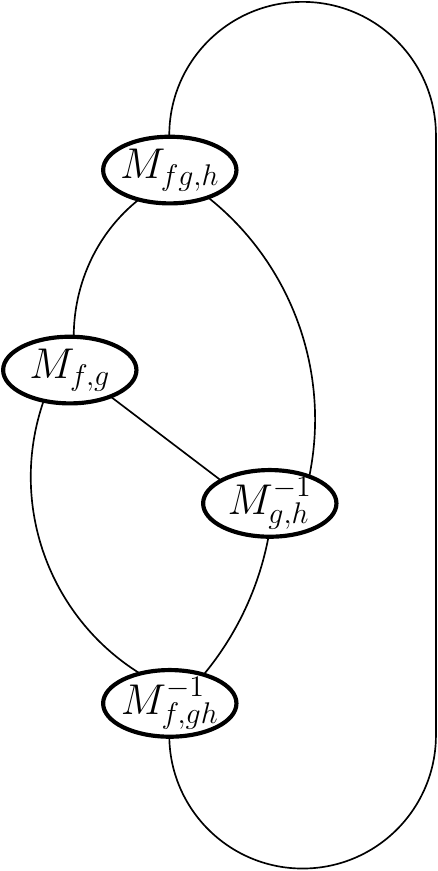}}$  \end{center} 
The category $\cD$ with tensor product coming from $M$ is quasi-monoidal if and only if $T(c,M)_{f,g,h} = \Id_\cC$ for all $f,g,h \in G$. We call a collection of bimodule equivalences $M$ such that $T(c,M)$ vanishes a \textit{system of products for $c$}. We consider systems of products for $c$ up to quasi-monoidal equivalence of extensions of the corresponding quasi-monoidal extensions of $\cC$. 

We directly verify that $T(c,M)$ is a 3-cocycle valued in $\Inv(\cZ(\cC))$.
\begin{lem}
We have 
\[T(c,M)_{f,g,h}T(c,M)^{c_f}_{g,h,k}T(c,M)_{f,gh,k}  = T(c,M)_{f,g,hk}T(c,M)_{fg,h,k}.\]
\end{lem}
\begin{proof}
We prove that $T(c,M)$ is a 3-cocycle via the graphical calculus. At each stage in this calculation, we indicate with a red box the region we will modify, and explain the modification in written words below the picture proof.

\begin{align*}
&T(c,M)_{f,g,h}T(c,M)^{c_f}_{g,h,k}T(c,M)_{f,gh,k} =  \\
&\raisebox{-.5\height}{ \includegraphics[scale = .45]{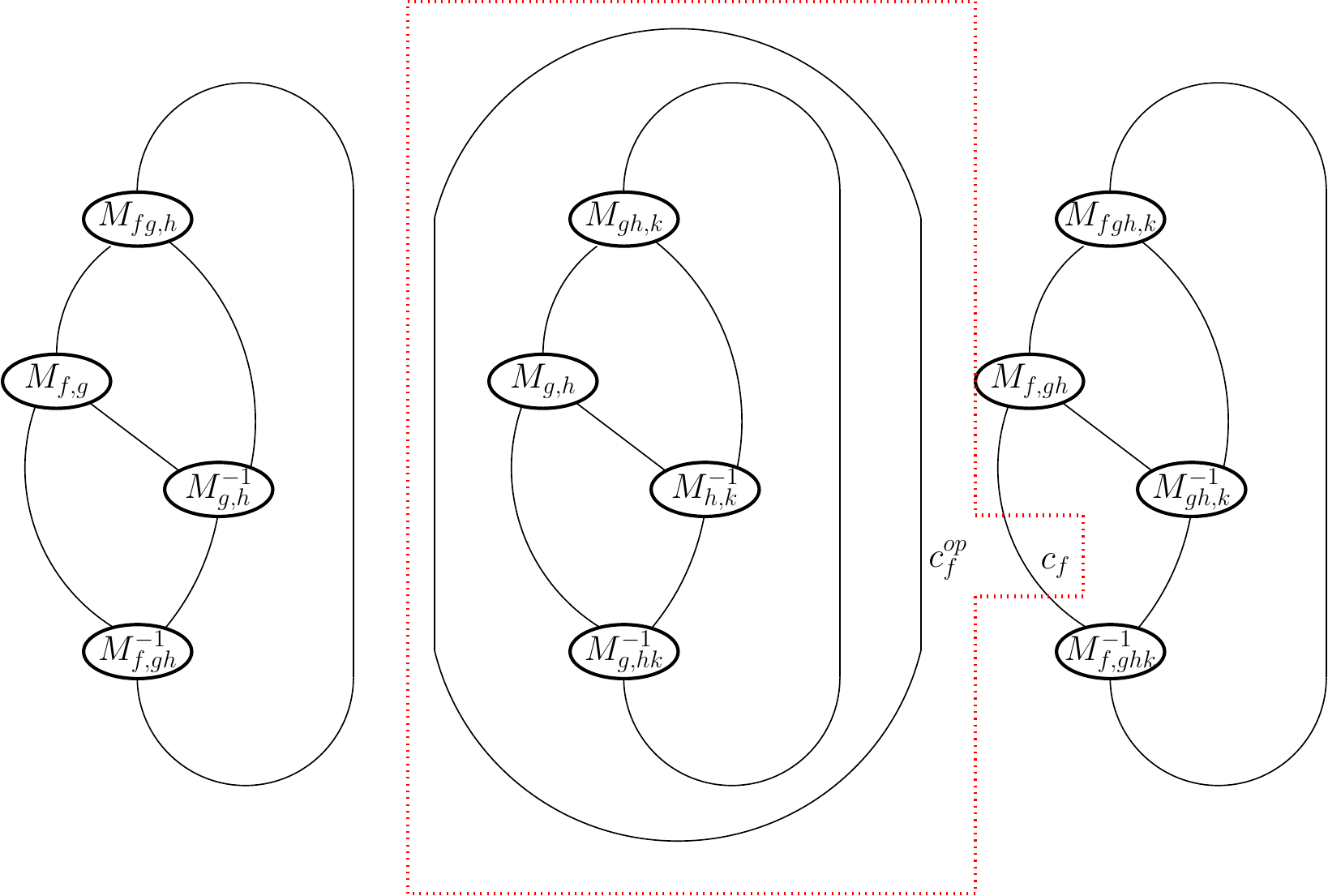}} \quad = \\
&\raisebox{-.5\height}{ \includegraphics[scale = .45]{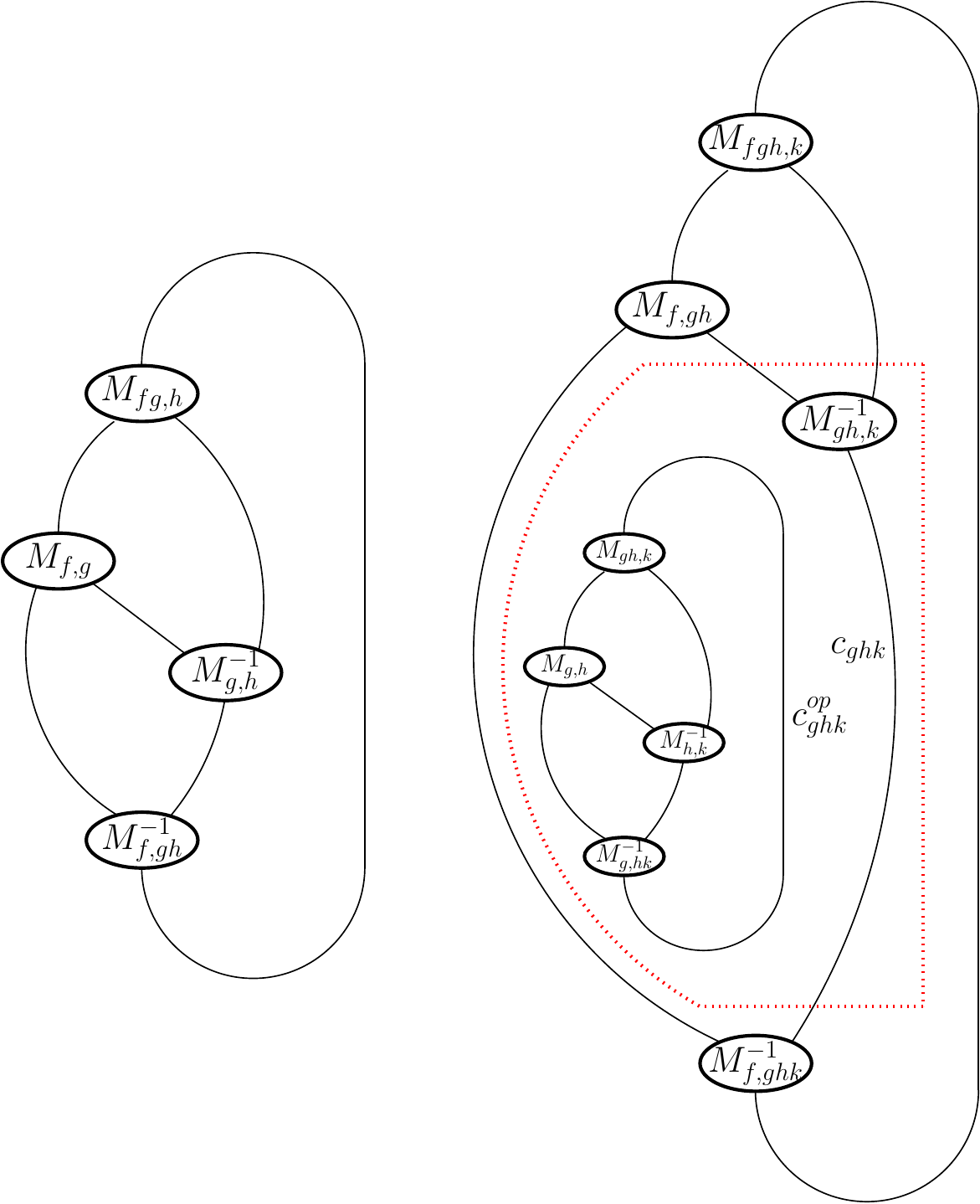}} \quad =  \\
&\raisebox{-.5\height}{ \includegraphics[scale = .45]{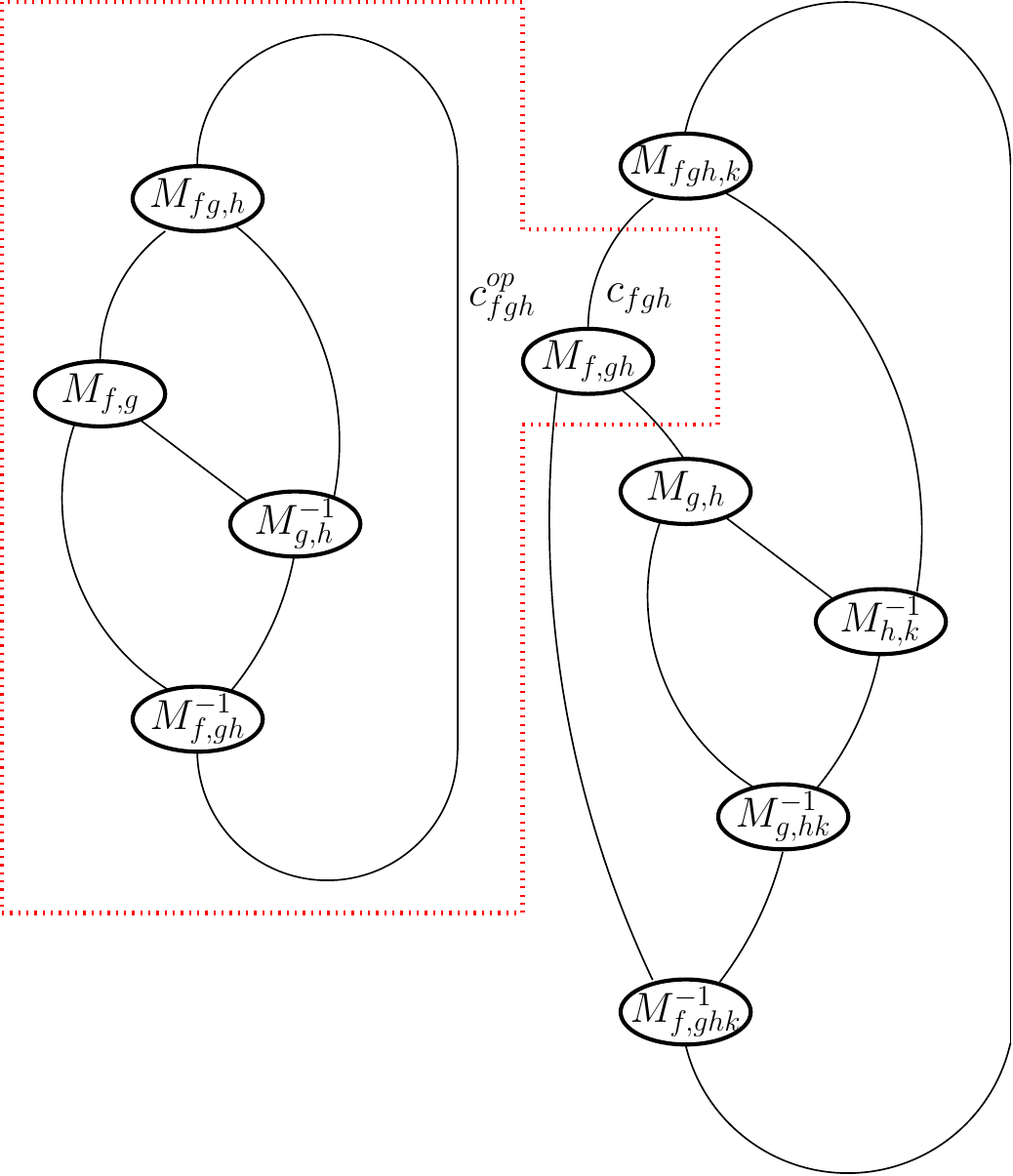}} \eq \raisebox{-.5\height}{ \includegraphics[scale = .4]{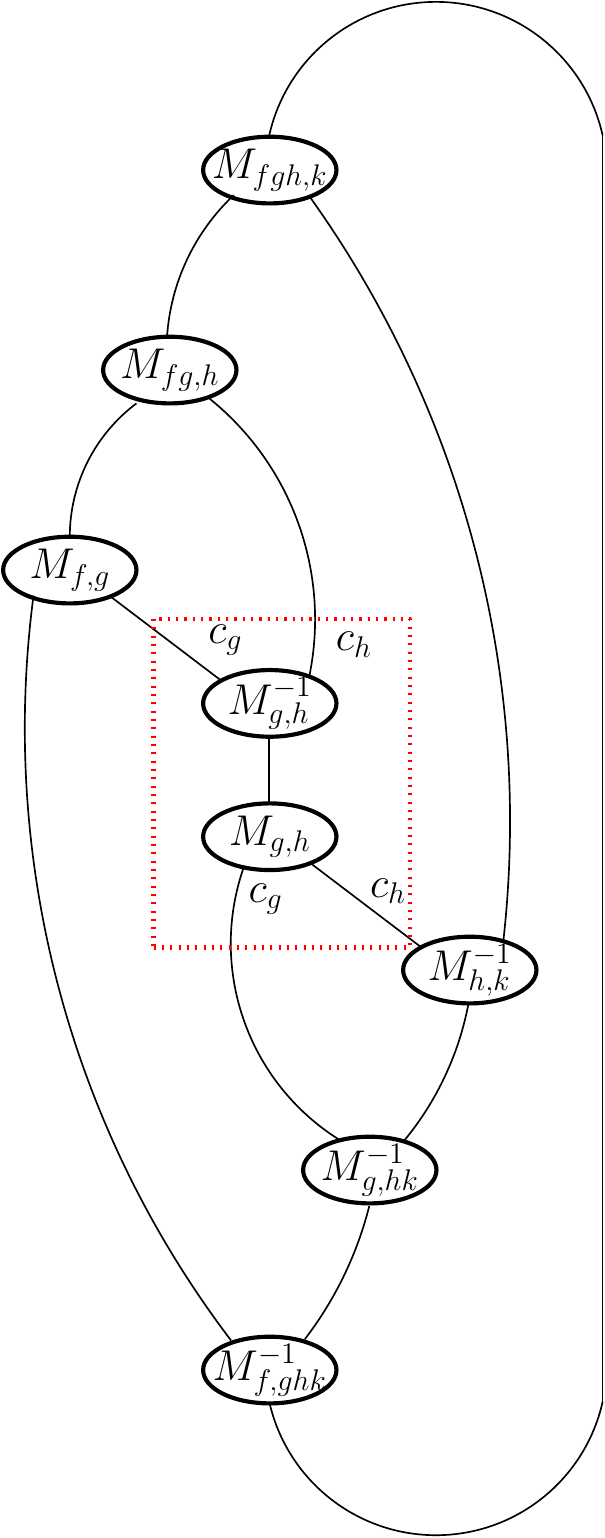}} \quad =  \\ 
& \raisebox{-.5\height}{ \includegraphics[scale = .45]{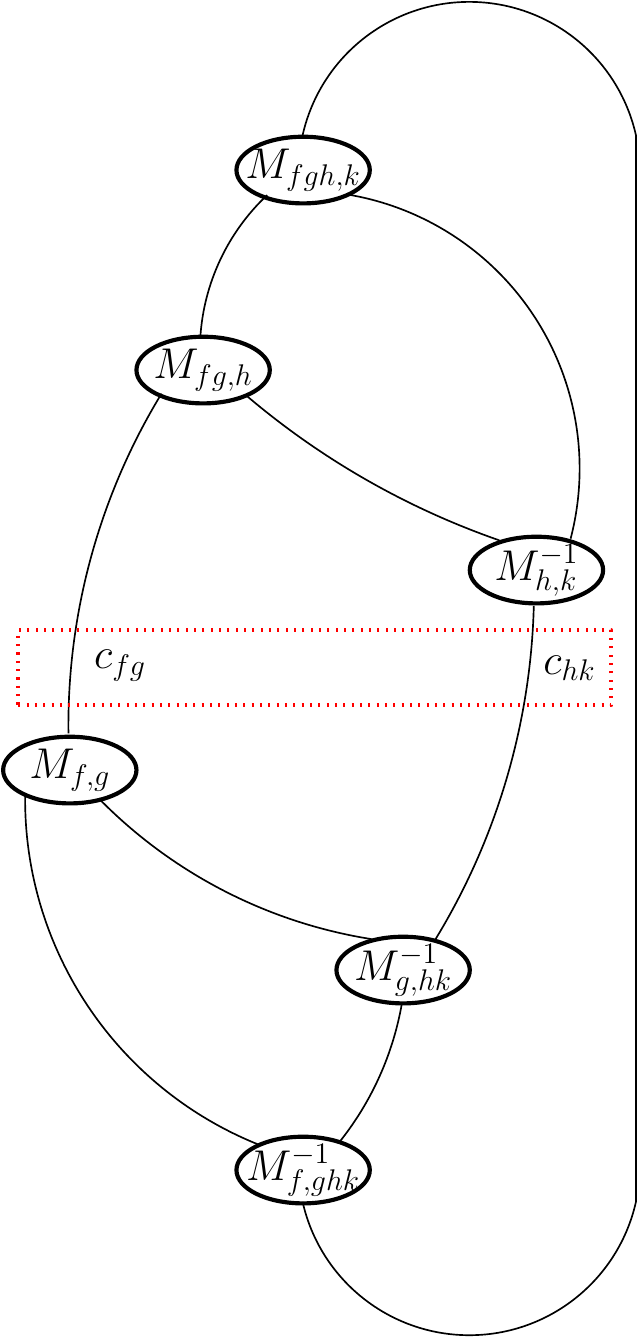}}  \eq \raisebox{-.5\height}{ \includegraphics[scale = .4]{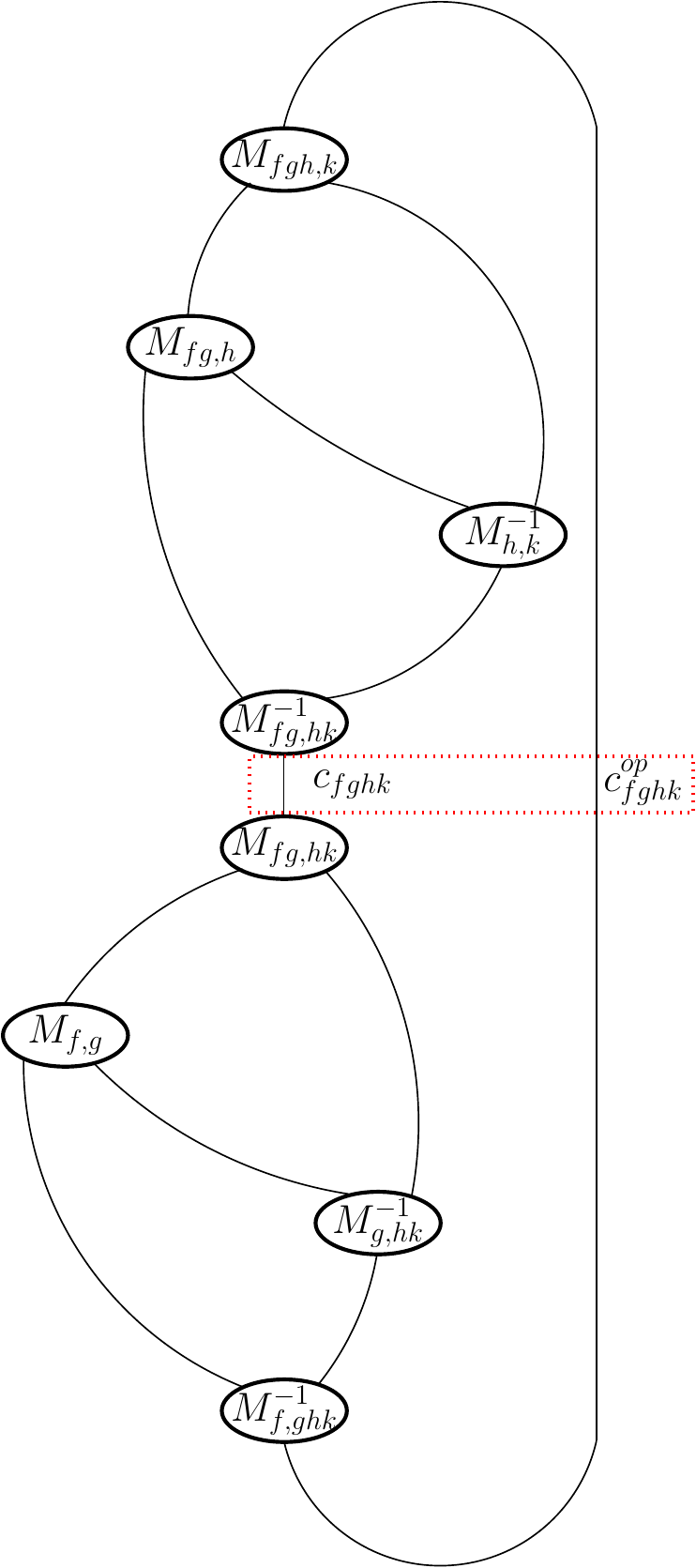}} \quad = \\ 
& \raisebox{-.5\height}{ \includegraphics[scale = .45]{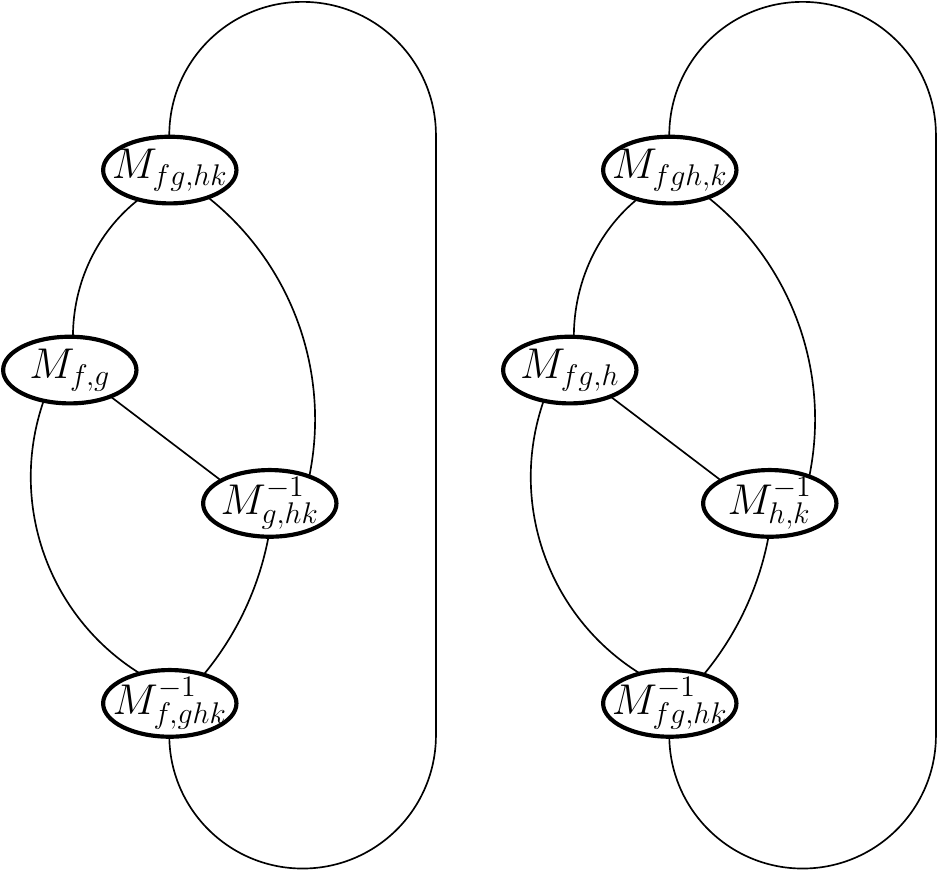}}\quad=  T(c,M)_{f,g,hk}T(c,M)_{fg,h,k}.
\end{align*}
For the first equality we just apply Lemma~\ref{lem:lact}. For the second equality we recouple the $c^{op}_{ghk}$ and $c_{ghk}$ strings, then we apply Lemma~\ref{lem:anue}, and cancel the $M_{gh,k}$ with its inverse. For the third equality we recouple the $c^{op}_{fgh}$ and $c_{fgh}$ strings, then we apply Lemma~\ref{lem:anue}, and cancel the $M_{f,gh}$ with its inverse. For the fourth equality we cancel $M_{g,h}$ with its inverse. For the fifth equality we replace $\Id_{c_{fg}} \boxtimes \Id_{c_{hk}}$ with $M_{fg,hk}^{-1} \circ M_{fg,hk}$. For the final equality we recouple the $c^{op}_{fghk}$ and $c_{fghk}$ strings.
\end{proof}

To simplify notation and proofs for the remainder of the Section we introduce the following notation.
\begin{defn}\label{def:Maction}
Let $\rho \in C^2(G,\Inv(\cZ(\cC)))$ and $M$ a collection of bimodule equivalences $\{  c_g\boxtimes c_h \to c_{gh} \}$. We define 
\[     M^\rho:= \{ \rho_{g,h}\boxtimes M_{g,h}:   c_g \boxtimes c_h \to c_{gh}\} = \left\{  \raisebox{-.5\height}{ \includegraphics[scale = .6]{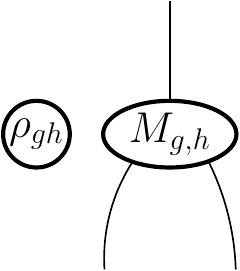}} \right \} .\]
\end{defn}
Note that if $\rho \in C^2(G,\Inv(\cZ(\cC)))$ is arbitrary, and $M$ is a system of products for $c$, i.e. $T(c,M)$ vanishes, then it is not guaranteed that $M^\rho$ is a system of products for $c$. In fact the following Lemma shows $M^\rho$ is a system of products for $c$ if and only if $\rho$ is a coboundary.

\begin{lem}\label{lem:3cob}
We have 
\[	T(c,M^\rho)_{f,g,h} = 	\partial^3(\rho)_{f,g,h}T(c,M)_{f,g,h} .	\]
\end{lem}
\begin{proof}
We compute using Lemma~\ref{lem:lact} that
\[ T(c,M^\rho)_{f,g,h} =  \raisebox{-.5\height}{ \includegraphics[scale = .5]{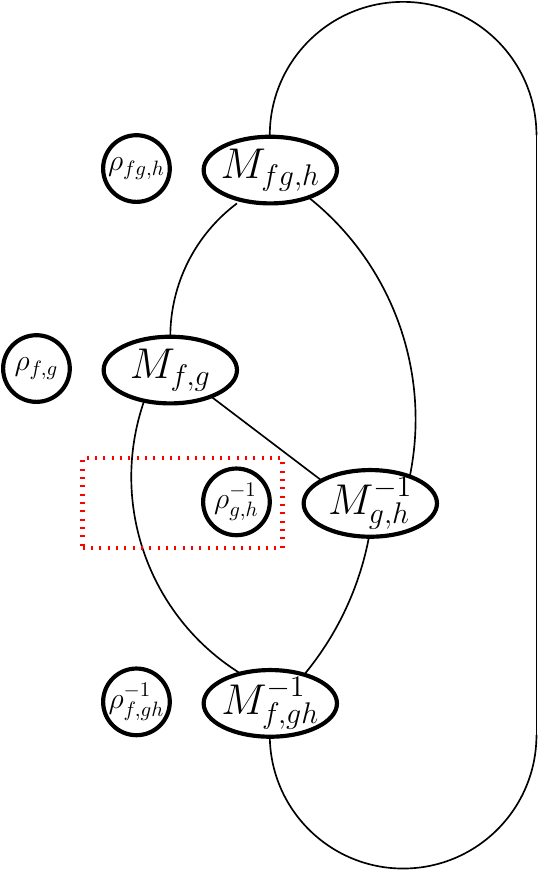}}   = \raisebox{-.5\height}{ \includegraphics[scale = .5]{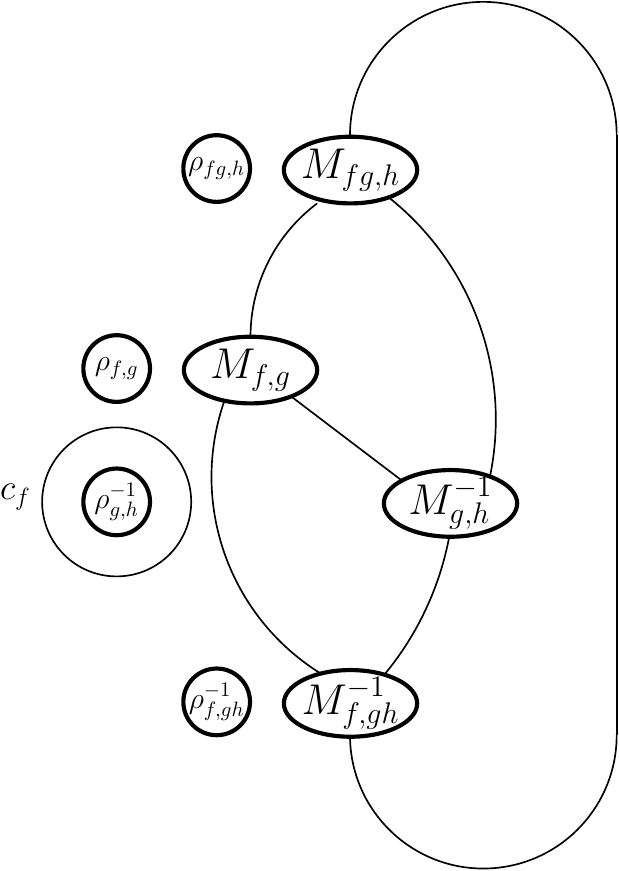}} = \rho_{fg,h}\rho_{f,g}{\rho_{g,h}^{-1}}^{c_f}\rho_{f,gh}^{-1}T(c,M).\]
\end{proof}

Suppose we have $M$ and $M'$  both collections of bimodule equivalences $\{  c_g\boxtimes c_h \to c_{gh} \}$. Then each $M'_{g,h}$ differs from $M_{g,h}$ by a bimodule auto-equivalence of $c_{gh}$, and thus by Lemma~\ref{lem:anue} we have $M'_{g,h} = L_{g,h}M_{g,h}$ for some $L_{g,h} \in \Inv(\cZ(\cC))$. In the notation of Definition~\ref{def:Maction} we thus have $M' = M^L$. Applying Lemma~\ref{lem:3cob} then gives 
\[ T(c,M') = \partial^3(L)T(c,M). \]
Hence $T(c,M')$ differs from $T(c,M)$ by a coboundary in $Z^3(G,\Inv(\cZ(\cC)))$. Thus the cohomology class of $T(c,M)$ is independent of the choice of $M$. This implies that \[o_3(c):= T(c,-)\] is an element of $H^3(G,\Inv(\cZ(\cC)))$ that depends only on $c$.

From the element $o_3(c)$ we can deduce a necessary and sufficient condition for there to exist a system of products for $c$, and hence for the category $\cD$ to be quasi-monoidal.
\begin{lem}\label{lem:nonempo3}
There exists a system of products for $c$ if and only if $o_3(c)$ is the trivial element of $H^3(G,\Inv(\cZ(\cC)))$.
\end{lem}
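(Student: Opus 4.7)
The plan is to prove both implications by unwinding the definitions, with Lemma~\ref{lem:3cob} doing essentially all the work in the nontrivial direction.

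The forward direction is immediate. If $M$ is a system of products for $c$ then, by definition, the 3-cocycle $T(c,M)_{f,g,h}$ is isomorphic to $\Id_B$ for all $f,g,h \in G$. Hence $T(c,M)$ is the trivial 3-cocycle in $Z^3(G,\Inv(Z(B)))$, and so its cohomology class $o_3(c)=[T(c,M)]$ is trivial in $H^3(G,\Inv(Z(B)))$.

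For the backward direction, I first note that since $c\colon G\to\BrPic(B)$ is a group homomorphism, for every pair $(g,h)\in G\times G$ the bimodules $c_g\boxtimes c_h$ and $c_{gh}$ are equivalent, and hence we may choose some collection $M=\{M_{g,h}\}$ of bimodule equivalences $c_g\boxtimes c_h\to c_{gh}$. By construction $[T(c,M)]=o_3(c)$, and the assumption that $o_3(c)$ is trivial means that there exists some $L\in C^2(G,\Inv(Z(B)))$ with
\[ T(c,M) \cong \partial^3(L). \]

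Now I modify $M$ by replacing it with $M':=M^{L^{-1}}$ in the sense of Definition~\ref{def:Maction}. Applying Lemma~\ref{lem:3cob} gives
\[ T(c,M') = T(c,M^{L^{-1}}) \cong \partial^3(L^{-1})\,T(c,M) \cong \partial^3(L^{-1})\,\partial^3(L) \cong \Id_B, \]
so each $T(c,M')_{f,g,h}$ is isomorphic to $\Id_B$, and $M'$ is a system of products for $c$. I do not expect any real obstacle; the entire content of the backward direction is the observation that the action of $C^2(G,\Inv(Z(B)))$ on the set of collections of bimodule equivalences (via Definition~\ref{def:Maction}) is transitive, so that any representative of the trivial class in $H^3$ can be killed by an appropriate 2-cochain adjustment of $M$.
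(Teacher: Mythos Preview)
Your proof is correct and essentially identical to the paper's own argument: both directions are handled the same way, with the backward direction choosing an arbitrary collection $M$, writing $T(c,M)=\partial^3(\rho)$ for some 2-cochain, and then applying Lemma~\ref{lem:3cob} to see that $M^{\rho^{-1}}$ has vanishing $T$. The only difference is notational ($L$ versus $\rho$).
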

\begin{proof}
First we assume $o_3(c)$ is the trivial element of $H^3(G,\Inv(\cZ(\cC)))$. Choose $M$ an arbitrary collection of bimodule equivalences $\{ M_{g,h}: c_g \boxtimes c_h \to c_{gh} \}$, which we can do as $c$ is a group homomorphism. As $o_3(c)$ is trivial in $H^3(G,\Inv(\cZ(\cC)))$ we have that $T(c,M) = \partial^3(\rho)$ for some $\rho \in C^2(G, \Inv(\cZ(\cC)))$. We choose a new collection of bimodule equivalences $M^{\rho^{-1}}$. An application of Lemma~\ref{lem:3cob} gives 
\[ T(c,M^{\rho^{-1}}) = \partial^3(\rho^{-1}) T(c,M) = \partial^3(\rho^{-1})\partial^3(\rho) = \Id.\]
Thus $M^{\rho^{-1}}$ is a system of products for $c$.

Conversely, suppose that $M$ is a system of products $M$ for $c$. Then by definition $T(c,M)$ vanishes in $Z^3(G,\Inv(\cZ(\cC)))$, and so $o_3(c)$ is trivial in $H^3(G,\Inv(\cZ(\cC)))$.
\end{proof}

Given that we have found exactly when there exists a system of products for $c$, our next goal is to classify all systems of products for $c$.

\begin{lem}\label{lem:tor2}
Suppose $o_3(c)$ is trivial, then systems of products for $c$ form a torsor over $H^2(G,\Inv(\cZ(\cC)))$.
\end{lem}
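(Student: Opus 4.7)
The plan is to exhibit an action of $Z^2(G, \Inv(Z(B)))$ on the set of systems of products for $c$ and show that, modulo quasi-equivalence of the induced quasi-monoidal categories, this descends to a free and transitive action of $H^2(G, \Inv(Z(B)))$. Non-emptiness is already guaranteed by Lemma~\ref{lem:nonempo3}.

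First I would verify the action: for $\rho \in Z^2(G, \Inv(Z(B)))$ and a system of products $M$, Definition~\ref{def:Maction} produces a new collection $M^\rho$, and by Lemma~\ref{lem:3cob} we have $T(c, M^\rho) \cong \partial^3(\rho) T(c, M) \cong \Id$, so $M^\rho$ is again a system of products. Next I would establish transitivity of the $Z^2$-action: given two systems $M, M'$, applying Lemma~\ref{lem:writenow} componentwise to the pairs $(M_{g,h}, M'_{g,h})$ yields elements $L_{g,h} \in \Inv(Z(B))$ with $M'_{g,h} \cong L_{g,h} \boxtimes M_{g,h}$, i.e.\ $M' = M^L$; then Lemma~\ref{lem:3cob} together with $T(c, M) = T(c, M') = \Id$ forces $\partial^3(L) \cong \Id$, so $L \in Z^2$.

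The main step is then to match the stabilizer of $M$ with $B^2(G, \Inv(Z(B)))$, i.e.\ to show that $M$ and $M^L$ give rise to quasi-equivalent quasi-monoidal categories if and only if $L$ is a 2-coboundary. For the ``if'' direction, given $\mu \in C^1(G, \Inv(Z(B)))$ with $L = \partial^2(\mu)$, I would define a graded functor $F_\mu: \bigoplus c_g \to \bigoplus c_g$ whose restriction to each $c_g$ is the bimodule auto-equivalence $\mu_g \triangleright ?$. Repeated use of Lemma~\ref{lem:lact} to slide each $\mu_g$-label across the appropriate bimodule string then produces the required natural isomorphism $F_\mu(X) \otimes_M F_\mu(Y) \cong F_\mu(X \otimes_{M^L} Y)$ via a direct string-diagram calculation, exhibiting $F_\mu$ as a graded quasi-monoidal equivalence.

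For the converse, a graded quasi-monoidal equivalence $F = \bigoplus F_g$ (which after a harmless adjustment we may assume restricts to the identity on $c_e = B$) is compatible with the left and right $B$-actions induced by the tensor product, so each $F_g$ is isomorphic to a bimodule auto-equivalence of $c_g$, hence of the form $\mu_g \triangleright ?$ via the isomorphism $\Aut(c_g) \cong \Inv(Z(B))$. Unpacking the tensorator of $F$ in the graphical calculus then yields the identity $L \cong \partial^2(\mu)$. The main obstacle is this last direction: one must carefully read the cocycle identity out of the tensorator using the same graphical bookkeeping with Lemma~\ref{lem:lact} that appeared in verifying $T(c,M)$ is a $3$-cocycle, and check that the choices of $\mu_g$ assemble into a well-defined $1$-cochain whose coboundary recovers $L$.
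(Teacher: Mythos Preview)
Your proposal is correct and follows essentially the same approach as the paper: establish the $Z^2$-action via $M^\rho$ and Lemma~\ref{lem:3cob}, prove transitivity via Lemma~\ref{lem:writenow}, construct the quasi-equivalence $F_\mu$ from a $1$-cochain to show the action descends to $H^2$, and read off a coboundary from a given quasi-equivalence to prove freeness. One small remark: in the paper's setup for this section, equivalences of extensions are by convention the identity on $c_e=B$, so your ``harmless adjustment'' is already built into the definition rather than something to argue.
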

\begin{proof}\hspace{1em}

\noindent We begin by showing that the group  $Z^2(G,\Inv(\cZ(\cC)))$ acts on system of products for $c$.
\begin{trivlist}\leftskip=3em
\item
Let $M$ be a system of products for $c$, and $\psi \in Z^2(G,\Inv(\cZ(\cC)))$. Then $\psi$ acts on $M$ as in Definition~\ref{def:Maction}. An application of Lemma~\ref{lem:3cob} shows that 
\[  T(c,M^\psi) =\partial^3(\psi)T(c,M). \]
As $\psi$ is a 2-cocycle, $\partial^3(\psi)$ is trivial, and as $M$ is a system of products for $c$, we have that $T(c,M)$ vanishes. Thus $T(c,M^\psi)$ also vanishes, and so $M^\psi$ is also a system of products for $c$. 
\end{trivlist}

\noindent Next we show that the action of $Z^2(G,\Inv(\cZ(\cC)))$ on system of products for $c$ is transitive.
\begin{trivlist}\leftskip=3em
\item
Let $M$ and $M'$ be two system of products for $c$. As explained in earlier discussion, each $M'_{g,h} = L_{g,h}M_{g,h}$ for some $L_{g,h} \in \Inv(\cZ(\cC))$. Lemma~\ref{lem:3cob} then shows that 
\[     T(c,M') =  \partial^3(L)T(c,M). \]
As $M$ and $M'$ are both system of products for $c$, we have that both $T(c,M)$ and $T(c,M')$ are trivial. Hence $\partial^3(L)$ must also be trivial, and thus $L \in Z^2(G, \Inv(\cZ(\cC)))$. Therefore $M'  = M^L$ with $L \in Z^2(G, \Inv(\cZ(\cC)))$.
\end{trivlist}

\noindent Now we show that the action on $Z^2(G,\Inv(\cZ(\cC)))$ on system of products for $c$ decends to a well-defined action of $H^2(G,\Inv(\cZ(\cC)))$.
\begin{trivlist}\leftskip=3em
\item
Let $M$ a system of products for $c$, and $\psi \in B^2(G,\Inv(\cZ(\cC)))$. Then $\psi_{g,h}\rho_{g,h} = \rho_g\rho_h^{c_g}$ for some $\rho \in C^1(G,\Inv(\cZ(\cC)))$. We define an equivalence of extensions 
\[  F_\rho := \bigoplus \rho_g \Id_{C_g}\]
 between the quasi-monoidal extensions of $\cC$ associated to the tuples $(c,M^\psi)$ and $(c,M)$. We compute
\[ \raisebox{-.5\height}{ \includegraphics[scale = .7]{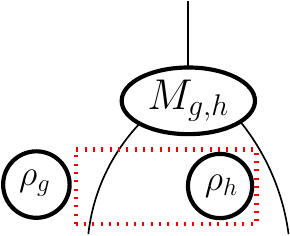}} \eq \raisebox{-.5\height}{ \includegraphics[scale = .7]{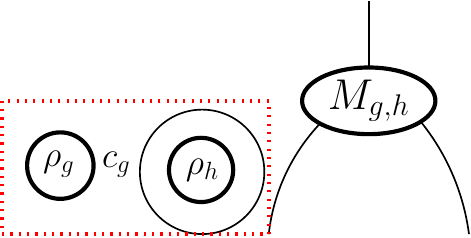}} \eq \raisebox{-.5\height}{ \includegraphics[scale = .7]{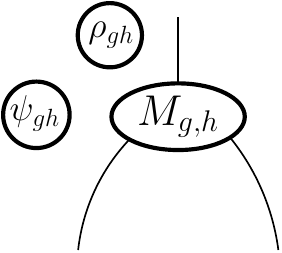}},\]
with the first equality following from Lemma~\ref{lem:lact}, and the second equality following as $\psi$ is a coboundary. This equation shows that 
\[ F_\rho(X_g) \otimes F_\rho(Y_h) \cong F_\rho(X_g\otimes Y_h),\] 
hence $F_\rho$ is a quasi-monoidal graded equivalence. Thus the quasi-monoidal extensions of $\cC$ associated to the tuples $(c,M^\psi)$ and $(c,M)$ are equivalent as quasi-monoidal extensions of $\cC$, and so $M$ and $M^\psi$ are equivalent as systems of products for $c$.
\end{trivlist}

\noindent Finally we show that the action of $H^2(G,\Inv(\cZ(\cC)))$ on systems of products for $c$ is free.

\begin{trivlist}\leftskip=3em
\item
Let $M$ be a system of products for $c$, and $\psi \in Z^2(G, \Inv(\cZ(\cC)))$. Suppose there exists $\cF$ a quasi-monoidal equivalence of extensions between the two quasi-monoidal extensions of $\cC$ coming from the tuples $(c,M^\psi)$ and $(c,M)$. Then $\cF$ restricted to $c_g$ gives a bimodule auto-equivalence of $c_g$, and thus by Lemma~\ref{lem:anue}, we have $\cF|_{c_g} = \rho_g \Id_{c_g}$ for $\rho_g \in \Inv(\cZ(\cC))$. As $\cF$ is quasi-monoidal, we have that 
\[ \raisebox{-.5\height}{ \includegraphics[scale = .7]{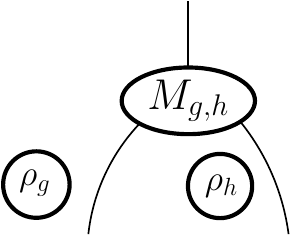}} \eq \raisebox{-.5\height}{ \includegraphics[scale = .7]{quasiequivstep3}}.\]
An application of Lemma~\ref{lem:lact} shows that $\psi_{g,h}\rho_{g,h} = \rho_g\rho_h^{c_g}$. Thus $\psi$ is a coboundary.
\end{trivlist}

Putting everything together we have that systems of products for $c$ form a torsor over $H^2(G,\Inv(\cZ(\cC)))$.
\end{proof}

Now that we have classified pairs $(c,M)$ giving rise to quasi-monoidal categories, we wish to classify all associators for these quasi-monoidal categories. As $M$ is a system of products for $c$, by definition there exists a collection of bimodule natural isomorphisms: 
\[ A :=\{ A_{f,g,h}: M_{fg,h}\circ(M_{f,g} \boxtimes \Id_{c_h}) \cong M_{f,gh}\circ(\Id_{c_f} \boxtimes M_{g,h}) \} = \left \{ \raisebox{-.5\height}{ \includegraphics[scale = .6]{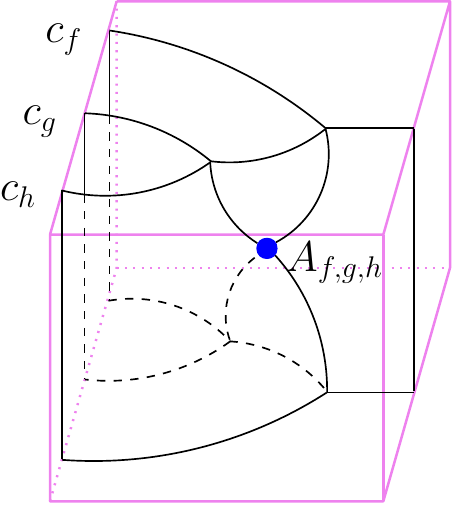}} \right \} . \]
Applying Equation~\eqref{eq:funball} to the collection $A$ we get $\cC$-balanced isomorphisms:
\[a_{f,g,h}: m_{fg,h}\circ(m_{f,g} \times \id_{c_h}) \cong m_{f,gh}\circ(\id_{c_f} \times m_{g,h}). \]
We define an associator on the quasi-monoidal category associated to the tuple $(c,M)$ by:
\[ \alpha_{X_f,Y_g,Z_h} :=( a_{f,g,h})_{X_f,Y_g,Z_h}.\]

For this associator to satisfying the pentagon axiom we exactly need the equality of $\cC$-balanced isomorphisms:
\begin{align*}   & [\id_{m_{f,ghk}}\circ(\id_{\Id_{c_f}} \times a_{g,h,k})]  [a_{f,gh,k}\circ ( \id_{\Id_{c_f}} \times \id_{m_{g,h}}\times \id_{\Id_{c_k}} )] [\id_{m_{fgh,k}}\circ( a_{f,g,h} \times \id_{\Id_{c_k}}) ] = \\ &[a_{f,g,hk}\circ ( \id_{\Id_{c_f}} \times \id_{\Id_{c_g}} \times \id_{m_{h,k}})] [a_{fg,h,k}\circ (\id_{m_{f,g}} \times \id_{\Id_{c_h}} \times \id_{\Id_{c_k}})].  \end{align*}

Via Equation~\eqref{eq:funball} and a rearrangement, the above equality is equivalent to showing that the following bimodule functor natural automorphism:

\begin{align*}   v(c,M,A)_{f,g,h,k} :=& [A_{fg,h,k}^{-1}\circ (\id_{M_{f,g}} \boxtimes \id_{\Id_{c_h}} \boxtimes \id_{\Id_{c_k}})][   A_{f,g,hk}^{-1}\circ (\id_{\Id_{c_f}} \boxtimes \id_{\Id_{c_g}} \boxtimes \id_{M_{h,k}})] \\
& [\id_{M_{f,ghk}}\circ(\id_{\Id_{c_f}} \boxtimes A_{g,h,k})]  [A_{f,gh,k}\circ ( \id_{\Id_{c_f}} \boxtimes \id_{M_{g,h}}\boxtimes \id_{\Id_{c_k}} )] [\id_{M_{fgh,k}}\circ( A_{f,g,h} \boxtimes \id_{\Id_{c_k}}) ] \end{align*}
is equal to the identity of the functor 
\[  H_{f,g,h,k} := M_{fgh,k} \circ ( M_{fg,h} \boxtimes \Id_{c_k}) \circ (M_{f,g} \boxtimes \Id_{c_h} \boxtimes \Id_{c_k}).\]
 A graphical description of $v(c,M,A)_{f,g,h,k}$ is given in Figure~\ref{fig:v4}. 
 
 We have that the graded extension $\cD$ is monoidal if and only if $v(c,M,A)_{f,g,h,k}$ is trivial for all $f,g,h,k \in G$. We call a collection of isomorphisms $A$ such that $v(c,M,A)$ vanishes, a \textit{system of associators for $(c,M)$}. We consider system of associators for $(c,M)$ up to equivalence of extensions of the corresponding graded extensions of $\cC$.

\begin{figure}
\caption{Graphical interpretation of $v(c,M,A)_{f,g,h,k}$}\label{fig:v4}
\begin{tikzcd}[row sep = 5cm,column sep = 5cm]
 \raisebox{-.5\height}{ \includegraphics[scale = .5]{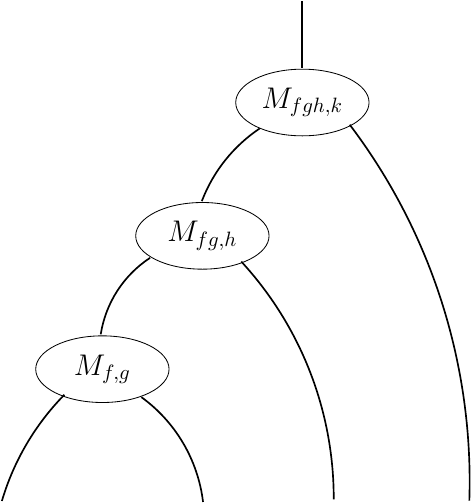}} \arrow{r}[swap]{\raisebox{-.5\height}{ \includegraphics[scale = .55]{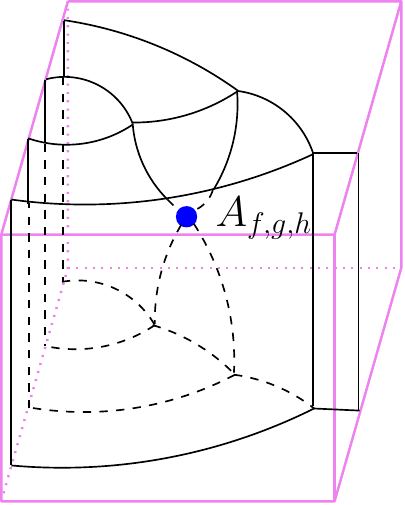}}} & \raisebox{-.5\height}{ \includegraphics[scale = .55]{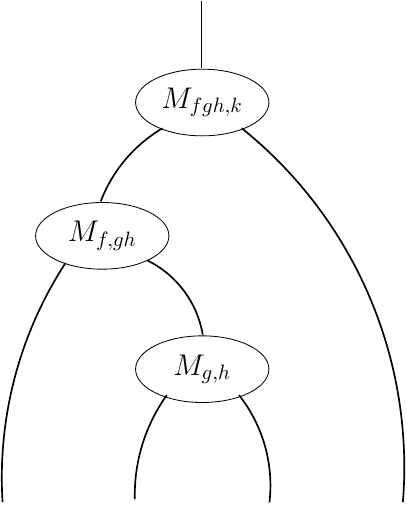}}  \arrow[d, "\raisebox{-.5\height}{ \includegraphics[scale = .5]{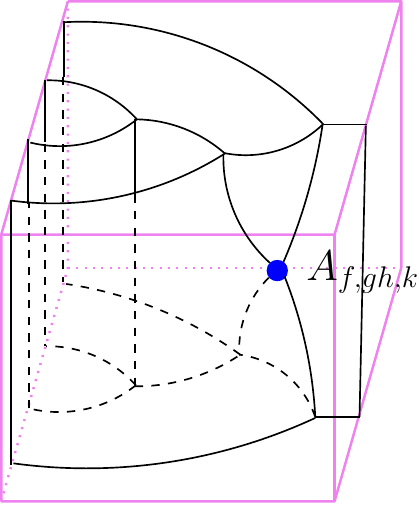}}"] \\
  											       & \raisebox{-.5\height}{ \includegraphics[scale = .5]{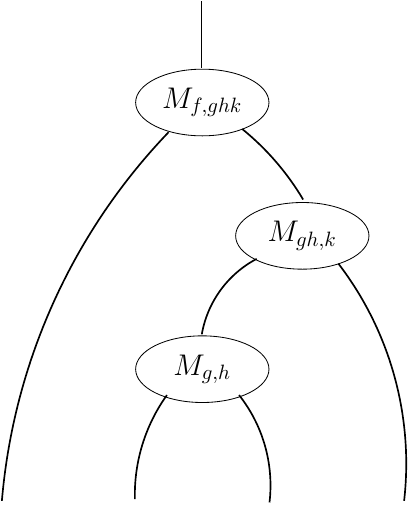}}  \arrow[d, "\raisebox{-.5\height}{ \includegraphics[scale = .55]{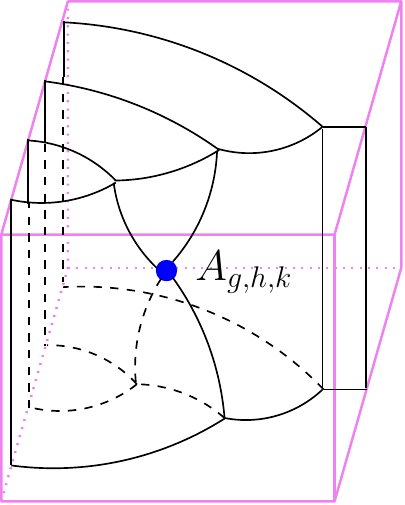}}"] \\
 \raisebox{-.5\height}{ \includegraphics[scale = .5]{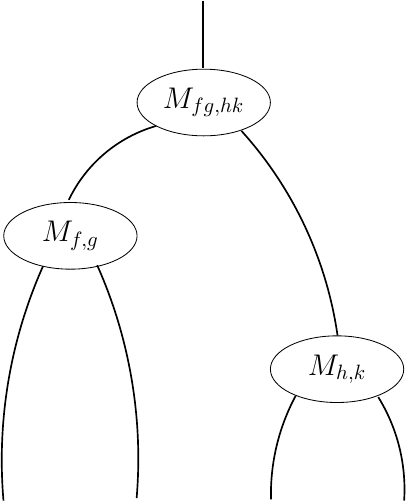}} \arrow[uu, "\raisebox{-.5\height}{ \includegraphics[scale = .55]{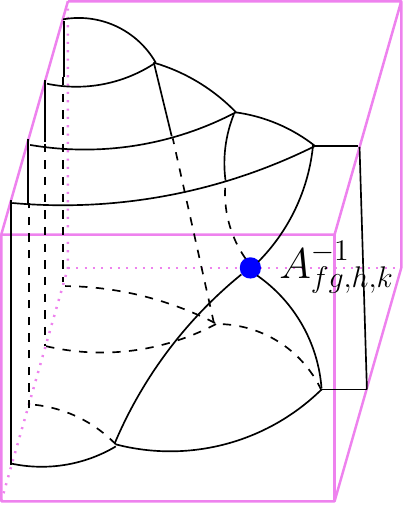}}"]   & \raisebox{-.5\height}{ \includegraphics[scale = .5]{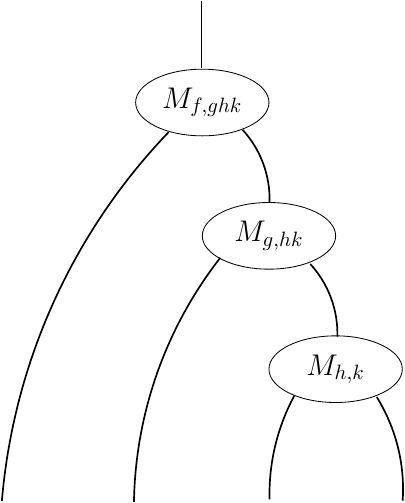}} \arrow{l}[swap]{\raisebox{-.5\height}{ \includegraphics[scale = .55]{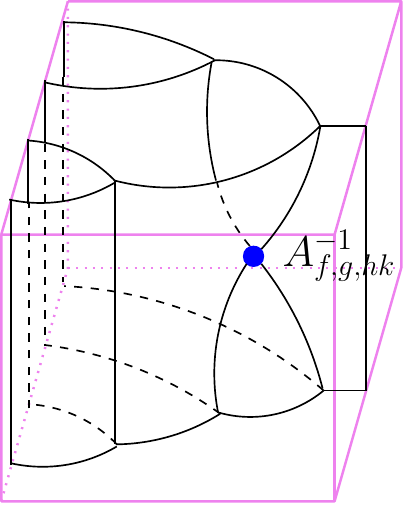}}} \\
\end{tikzcd}

\end{figure}

As $v(c,M,A)_{f,g,h,k}$ is a bimodule functor natural automorphism, we can identify it with a non-zero complex number. Thus $v(c,M,A)$ is a function $G \times G \times G\times G \to \mathbb{C}^\times$. We can also identify $v(c,M,A)$ with a natural automorphism of $\Id_\cC$ in the following way:

\begin{align*} V(c,M,A)_{f,g,h,k} :=&     \operatorname{r}_{\operatorname{R}_{c_{fghk} }}  \cdot [ \id_{\operatorname{R}_{c_{fghk}}} \circ (  \operatorname{r}_{H_{f,g,h,k}} \boxtimes \id_{\Id_{c_{fghk}^\text{op}}}) \circ \id_{\operatorname{R}^{-1}_{c_{fghk}}}    ]  \\
 &\cdot [ \id_{\operatorname{R}_{c_{fghk}}} \circ ( \id_{H^{-1}_{f,g,h,k}} \boxtimes \id_{\Id_{c_{fghk}^{op}}}) \circ (  v(c,M,A)_{f,g,h,k} \boxtimes \id_{\Id_{c_{fghk}^\text{op}}}) \circ \id_{\operatorname{R}^{-1}_{c_{fghk}}}    ] \\ 
&\cdot [ \id_{\operatorname{R}_{c_{fghk}}} \circ (  \operatorname{r}^{-1}_{H_{f,g,h,k}} \boxtimes \id_{\Id_{c_{fghk}^\text{op}}}) \circ \id_{\operatorname{R}^{-1}_{c_{fghk}}}    ]\cdot  \operatorname{r}^{-1}_{\operatorname{R}_{c_{fghk} }}.
\end{align*}
Graphically we can think of $V(c,M,A)_{f,g,h,k}$ as a compactification of the foam describing $v(c,M,A)_{f,g,h,k}$ in Figure~\ref{fig:v4}. 

As with $v(c,M,A)$, we can regard $V(c,M,A)$ as a function $G\times G \times G \times G\to \mathbb{C}^\times$. In fact when considered as functions $G\times G \times G \times G\to \mathbb{C}^\times$, we have that $v(c,M,A)$ and $V(c,M,A)$ are equal. The reason we have two representatives of the same function is because each that each representative is easier to work with in certain situations, and we need both to prove all the properties of the function $G\times G \times G \times G\to \mathbb{C}^\times$ required for this section. For example, we show in the following proof that $V(c,M,A)$ is a 4-cocycle, which then implies that the corresponding function $G\times G \times G \times G\to \mathbb{C}^\times$ is also a 4-cocycle. We remark that \cite{MR2677836} mentions an easy proof showing that $v(c,M,A)$ is a 4-cocycle. We were unable to come up with an easy proof, and instead have to use the 3-dimensional graphical calculus for $\BBBrPic(\cC)$ to prove this fact.

\begin{lem}
We have
\[ V(c,M,A)_{g,h,k,i}V(c,M,A)_{f,gh,k,i}V(c,M,A)_{f,g,h,ki} = V(c,M,A)_{fg,h,k,i}V(c,M,A)_{f,g,hk,i}V(c,M,A)_{f,g,h,k}\]
\end{lem}
\begin{proof}
Showing that \[ V(c,M,A)_{g,h,k,i}V(c,M,A)_{f,gh,k,i}V(c,M,A)_{f,g,h,ki} = V(c,M,A)_{fg,h,k,i}V(c,M,A)_{f,g,hk,i}V(c,M,A)_{f,g,h,k}\] follows somewhat similar to the calculation showing $T(c,M)$ is a 3-cocycle, except now the proof uses the 3-dimensional graphical calculus, so we need to recouple sheets instead of strings.  This computation is in fact easier in one sense, as each $V(c,M,A)_{-,-,-,-}$ can be contracted to a scaler and moved anywhere in the diagram through linearity.

The 4-cocycle calculation now follows by merging both \[V(c,M,A)_{g,h,k,i}V(c,M,A)_{f,gh,k,i}V(c,M,A)_{f,g,h,ki}\] and \[V(c,M,A)_{fg,h,k,i}V(c,M,A)_{f,g,hk,i}V(c,M,A)_{f,g,h,k}\] into single components, and rearranging to show they are equal. We omit the exact details of this calculation as it is fairly straightforward, but tedious to draw.
\end{proof}

Suppose we have two different collections of bimodule isomorphisms 
\[A_{f,g,h}, A'_{f,g,h}: M_{fg,h}\circ(M_{f,g} \boxtimes \id_{c_h}) \cong M_{f,gh}\circ(\id_{c_f} \boxtimes M_{g,h}). \]
Then each $A'_{f,g,h}$ differs from $A_{f,g,h}$ by a natural automorphism of $c_{fgh}$, and thus a non-zero complex number. Therefore $A'_{f,g,h} = \lambda_{f,g,h}A_{f,g,h}$ for some $\lambda_{f,g,h} \in \mathbb{C}^\times$. Using linearity it is easy to see that 
\[ v(c,M,A')_{f,g,h,k} = \lambda_{g,h,k}\lambda_{f,gh,k}\lambda_{f,g,h}\lambda^{-1}_{fg,h,k}\lambda^{-1}_{f,g,hk}v(c,M,A)_{f,g,h,k} = \partial^4(\lambda) v(c,M,A)_{f,g,h,k}. \]
Hence $v(c,M,A')$ differs from $v(c,M,A)$ by a coboundary in $Z^4(G,\mathbb{C}^\times)$. Thus the cohomology class of $v(c,M,A)$ is independent of the choice of collection $A$. This implies that \[o_4(c,M):= V(c,M,-)\] is an element $H^4(G,\mathbb{C}^\times)$ that depends only on $c$ and $M$.

From the element $o_4(c,M)$ we can deduce a necessary and sufficient condition for there to exist a system of associators for $(c,M)$, and hence for the category $\cD$ to be monoidal.

\begin{lem}\label{lem:nonempo4}
There exists a system of associators for $(c,M)$ if and only if $o_4(c,M)$ is the trivial element in $H^4(G,\mathbb{C}^\times)$.
\end{lem}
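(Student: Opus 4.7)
The plan is to mirror the proof of Lemma~\ref{lem:nonempo3} line for line, with the $4$-cocycle rescaling identity for $V(c,M,A)$ established just before the lemma playing the role of Lemma~\ref{lem:3cob}, and the fact that $M$ is already a system of products (so that at least one family of candidate isomorphisms $A_{f,g,h}$ exists) playing the role of ``$c$ is a homomorphism'' in that earlier argument.

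For the forward direction I will simply observe that if $A$ is a system of associators then $v(c,M,A)_{f,g,h,k}$ is by definition the identity natural automorphism of $H_{f,g,h,k}$ for every $(f,g,h,k)$. Under the compactification that defines $V(c,M,A)$ the resulting scalar is then $1$ everywhere, so $V(c,M,A)$ is the constant cocycle $1$ and $o_4(c,M) = [V(c,M,A)]$ is trivial in $H^4(G,\mathbb{C}^\times)$.

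For the reverse direction I will start by choosing an arbitrary family of bimodule natural isomorphisms
$$A_{f,g,h}: M_{fg,h}\circ(M_{f,g} \boxtimes \id_{c_h}) \to M_{f,gh}\circ(\id_{c_f} \boxtimes M_{g,h}),$$
which exists because $M$ is a system of products. Since $o_4(c,M)=[V(c,M,A)]$ is trivial, I can write $V(c,M,A) = \partial^4(\lambda)$ for some $\lambda \in C^3(G,\mathbb{C}^\times)$. I then rescale to $A'_{f,g,h} := \lambda_{f,g,h}^{-1} A_{f,g,h}$ and apply the rescaling identity established above to get
$$V(c,M,A') \;=\; \partial^4(\lambda^{-1})\,V(c,M,A) \;=\; \partial^4(\lambda^{-1})\,\partial^4(\lambda) \;=\; 1.$$
Because $V(c,M,A')$ and $v(c,M,A')$ agree pointwise as functions $G^4 \to \mathbb{C}^\times$, this forces $v(c,M,A')_{f,g,h,k}$ to be the identity natural automorphism of $H_{f,g,h,k}$ for every $(f,g,h,k)$, so $A'$ is a system of associators for $(c,M)$.

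The only delicate point, and the one I will flag rather than hide, is the identification between the bimodule natural automorphism $v(c,M,A)_{f,g,h,k}$ and the scalar $V(c,M,A)_{f,g,h,k}$: I need the compactification foam to faithfully translate ``trivial as a $\mathbb{C}^\times$-valued cocycle'' back into ``identity natural automorphism at every index''. Once this identification is in hand, the argument is formally identical to that of Lemma~\ref{lem:nonempo3}, with $(G,\Inv(Z(B)),\partial^3)$ replaced by $(G,\mathbb{C}^\times,\partial^4)$ one dimension up.
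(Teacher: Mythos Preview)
Your proof is correct and follows essentially the same approach as the paper's own proof: pick an arbitrary collection $A$, write $V(c,M,A)$ as a coboundary $\partial^4(\lambda)$, rescale by $\lambda^{-1}$, and use the rescaling identity to kill the obstruction. The point you flag about identifying $v$ with $V$ is not actually delicate here, since the paper established just before the lemma that they agree pointwise as $\mathbb{C}^\times$-valued functions on $G^4$.
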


\begin{proof}
Suppose $o_4(c,M)$ is trivial. As $M$ is a system of products for $c$, there exists some collection of bimodule isomorphisms 
\[A = \{ A_{f,g,h}: M_{fg,h}\circ(M_{f,g} \boxtimes \id_{c_h}) \cong M_{f,gh}\circ(\id_{c_f} \boxtimes M_{g,h})\}.\] 
As $o_4(c,M)$ is trivial we know that $v(c,M,A)= \partial^4(\rho)$ for some $\rho \in C^3(G,\mathbb{C}^\times)$. We define a new collection of bimodule isomorphisms \[A^{\rho^{-1}}_{f,g,h} := \rho_{f,g,h}^{-1}A_{f,g,h}.\] Using linearity it is easy to see that 
\[ v(c,M,A^{\rho^{-1}})_{f,g,h,k} = \rho_{g,h,k}^{-1}\rho_{f,gh,k}^{-1}\rho_{f,g,h}^{-1}\rho_{fg,h,k}\rho_{f,g,hk} v(c,M,A)_{f,g,h,k} = \partial^4(\rho)^{-1}_{f,g,h,k} \partial^4(\rho)_{f,g,h,k} =1. \]
Thus $A^{\rho^{-1}}$ is a system of associators for $(c,M)$.

Conversely, suppose there exists a system of associators for $(c,M)$. Then by definition $v(c,M,A)$ is trivial in $Z^4(G,\mathbb{C}^\times)$, and so $o_4(c,M)$ is trivial in $H^4(G,\mathbb{C}^\times)$.
\end{proof}

Given that we have determined when there exists a system of associators for the tuple $(c,M)$, we now wish to classify all systems of associators for $(c,M)$.

\begin{lem}\label{lem:tor3}
Suppose $o_4(c,M)$ is trivial, then systems of associators for $(c,M)$ form a torsor over $H^3(G,\mathbb{C}^\times)$.
\end{lem}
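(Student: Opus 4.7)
The plan is to mirror Lemma~\ref{lem:tor2} one level up: bimodule equivalences are replaced by scalar natural transformations, $H^2(G,\Inv(Z(B)))$ by $H^3(G,\mathbb{C}^\times)$, and quasi-monoidal equivalences by monoidal equivalences. Since $o_4(c,M)$ vanishes, Lemma~\ref{lem:nonempo4} already provides at least one system of associators, so it suffices to exhibit a free and transitive action of $H^3(G,\mathbb{C}^\times)$ on this set.

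For the action, given a system $A$ and $\psi \in Z^3(G,\mathbb{C}^\times)$, set $A^{\psi}_{f,g,h} := \psi_{f,g,h}\,A_{f,g,h}$. The scaling identity recorded just before Lemma~\ref{lem:nonempo4} yields $V(c,M,A^{\psi}) = d^4(\psi)\cdot V(c,M,A)$, which vanishes since $\psi$ is a cocycle, so $A^{\psi}$ is again a system of associators. For transitivity, any two systems $A, A'$ must agree up to a scalar $\lambda_{f,g,h} \in \mathbb{C}^\times$, since each $A'_{f,g,h}$ is a bimodule natural automorphism of the same functor; the same identity then forces $d^4(\lambda) = 1$, so $\lambda \in Z^3(G,\mathbb{C}^\times)$ and $A' = A^{\lambda}$.

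The substantive step is the passage from $Z^3$ to $H^3$. If $\psi = d^3(\mu)$ for some $\mu \in C^2(G,\mathbb{C}^\times)$, then I would equip the identity functor with the tensorator $J_{X_g, Y_h} := \mu_{g,h}\,\id_{X_g \otimes Y_h}$ and check that the monoidal coherence hexagon for $(\Id, J)$, regarded as a functor from $(c,M,A)$ to $(c,M,A^{\psi})$, reduces exactly to the coboundary relation between $\psi$ and $\mu$. Conversely, suppose $F$ is a monoidal equivalence between these two categories that restricts to the identity on $B$. Lemma~\ref{lem:writenow} forces each $F|_{c_g}$ to be a scalar multiple of $\Id_{c_g}$; after absorbing those scalars into the tensorator one may assume $F = \Id$ with $J_{g,h} = \mu_{g,h}\,\id$, and the same hexagon then exhibits $\psi$ as $d^3(\mu)$.

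The main obstacle is this last freeness argument: one must pin down that the relevant monoidal equivalences (those respecting the already-fixed $(c,M)$ data) carry no more information than a scalar $\mu \in C^2(G,\mathbb{C}^\times)$, so that triviality of the $Z^3$-action is governed entirely by the coboundary condition. This is precisely where Lemma~\ref{lem:writenow} and the rigidity of natural endomorphisms of bimodule equivalences are essential; once that rigidity is secured, everything else is a routine scalar analogue of Lemma~\ref{lem:tor2}.
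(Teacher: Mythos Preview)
Your overall structure matches the paper's proof exactly: act by $Z^3$ via $A^\psi_{f,g,h}=\psi_{f,g,h}A_{f,g,h}$, check transitivity by the scaling identity $V(c,M,A^\psi)=d^4(\psi)V(c,M,A)$, descend to $H^3$ by exhibiting $(\Id,\mu_{g,h}\id)$ as a monoidal equivalence when $\psi=d^3(\mu)$, and then prove freeness.

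The one substantive slip is in your freeness paragraph. Lemma~\ref{lem:writenow} does \emph{not} force $F|_{c_g}$ to be a scalar multiple of $\Id_{c_g}$; it only yields $F|_{c_g}\cong \rho_g\boxtimes\Id_{c_g}$ for some $\rho_g\in\Inv(Z(B))$, and such a $\rho_g$ cannot in general be ``absorbed into the tensorator'' as a scalar. The paper sidesteps this entirely: having already fixed the pair $(c,M)$, the relevant equivalences are those that are the identity as quasi-monoidal functors, so $F=\Id$ on each $c_g$ from the start. Then the tensorator $\tau$ restricted to $c_g\times c_h$ is a bimodule natural \emph{automorphism} of $M_{g,h}$, hence a scalar $\mu_{g,h}\in\mathbb{C}^\times$, and the monoidal hexagon reads
\[
\tau|_{c_f,c_{gh}}\,\tau|_{c_g,c_h}\,\psi_{f,g,h}\,\alpha_{f,g,h}=\alpha_{f,g,h}\,\tau|_{c_{fg},c_h}\,\tau|_{c_f,c_g},
\]
which gives $\psi=d^3(\mu)$ directly. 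Once you restrict to that class of equivalences (rather than invoking Lemma~\ref{lem:writenow}) your argument coincides with the paper's.
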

\begin{proof} \hspace{1em}

\noindent We first show that the group $Z^3(G,\mathbb{C}^\times)$ acts on systems of associators for $(c,M)$.
\begin{trivlist}\leftskip=3em
\item Let $A$ be such a system of associators for $(c,M)$, and $\psi \in Z^3(G,\mathbb{C}^\times)$. We define a new collection of bimodule isomorphisms by:
\begin{equation}\label{eq:act} A^\psi_{f,g,h} := \psi_{f,g,h}A_{f,g,h}. \end{equation}

We compute using linearity that 
\begin{equation}\label{eq:psiA} V(c,M,A^\psi)_{f,g,h,k}  = \psi_{g,h,k}\psi_{f,gh,k}\psi_{f,g,h}\psi_{fg,h,k}^{-1}\psi_{f,g,hk}^{-1}V(c,M,A) =  \partial^4(\psi)_{f,g,h,k}V(c,M,A) = V(c,M,A).\end{equation}
Thus $ V(c,M,A^\psi)$ is trivial, and so $A^\psi$ is a system of associators for $(c,M)$.
\end{trivlist}

\noindent Next we show that the action of $Z^3(G,\mathbb{C}^\times)$ on systems of associators for $(c,M)$ is transitive.
\begin{trivlist}\leftskip=3em
\item Let $A$ and $A'$ be two systems of associators for $(c,M)$. We then have that each $A'_{f,g,h} = \psi_{f,g,h}A_{f,g,h}$ for some $\psi_{f,g,h}\in \mathbb{C}^\times$. Hence $A' = A^\psi$ in the notation of Equation~\eqref{eq:act}. The Equation~\eqref{eq:psiA}, along with the fact that $V(c,M,A)$ and $V(c,M,A')$ vanish, implies that $\psi$ is an element of $Z^3(G,\mathbb{C}^\times)$. Thus $A' = A^\psi$ with $\psi \in Z^3(G,\mathbb{C}^\times)$.
\end{trivlist}

\noindent Now we show the action $Z^3(G,\mathbb{C}^\times)$ on systems of associators for $(c,M)$ descends to a well defined action of $H^3(G,\mathbb{C}^\times)$.
\begin{trivlist}\leftskip=3em
\item Let $A$ be a system of associators for $(c,M)$, and $\psi$ a coboundary in $Z^3(G,\mathbb{C}^\times)$. As $\psi$ is a coboundary we have $\psi = \partial^3(\rho)$ for some $\rho \in C^2(G,\mathbb{C}^\times)$. We define an equivalence $F_\rho$ of $G$-graded extensions between the two graded extensions of $\cC$ associated to the triples $(c,M,A^\rho)$ and $(c,M,A)$. As an abelian functor $F_\rho$ is the identity. The tensorator of $F_\rho$ is given by $\tau_{X_g,Y_h} := \rho_{g,h}\id_{X_g \otimes Y_h}$. To verify that $F_\rho$ is monoidal we compute
\begin{align*}   \alpha_{X_f,Y_g,Z_h}\tau_{fg,h}[\tau_{X_f,Y_g} \otimes \id_{Z_h}] =& (a_{f,g,h})_{X_f,Y_g,Z_h}[\rho_{fg,h}\rho_{f,g}\id_{X_f\otimes Y_g \otimes Z_h}] \\
															   =& (a_{f,g,h})_{X_f,Y_g,Z_h}[\psi_{f,g,h}\rho_{f,gh}\rho_{g,h}\id_{X_f\otimes Y_g \otimes Z_h}]\\
 															    =&[\rho_{g,h}\rho_{f,gh}\id_{X_f\otimes Y_g \otimes Z_h}]\psi_{f,g,h} (a_{f,g,h})_{X_f,Y_g,Z_h}\\
         														   =& [\id_{X_f} \otimes \tau_{g,h}]\tau_{f,gh}\alpha^\psi_{X_f,Y_g,Z_h}. \end{align*}
As there is an equivalence of $G$-graded extensions between the categories associated to the triples $(c,M,A)$ and $(c,M,A^\psi)$, we have that the systems of associators $A$ and $A^\psi$ are equivalent.
\end{trivlist}

\noindent Finally we show that the action of $H^3(G,\mathbb{C}^\times)$  on systems of associators for $(c,M)$ is free.
\begin{trivlist}\leftskip=3em
\item Let $A$ a system of associators for $(c,M)$ and $\psi \in Z^3(G,\mathbb{C}^\times)$. Suppose that there exists $(\cF,\tau)$ an equivalence of $G$-graded extensions between the graded extensions of $\cC$ corresponding to the triples $(c,M,A)$ and $(c,M,A^\psi)$. The restriction of the tensorator $\tau$ to $c_g \times c_h$ gives a bimodule automorphism of the functor $M_{g,h}$, and thus a non-zero complex number. As $(\cF,\tau)$ is monoidal we have 
\[ \tau |_{c_f,c_{gh}}\tau |_{c_g,c_h} \psi_{f,g,h}\alpha_{f,g,h} =  \alpha_{f,g,h} \tau |_{c_{fg},c_{h}}\tau |_{c_f,c_g}.   \]
Thus $\psi = d^3(\tau |_{-,-})$, and so $\psi$ is a coboundary in $Z^3(G,\mathbb{C}^\times)$. 
\end{trivlist}

Putting everything together we have that systems of associators for $(c,M)$ form a torsor over $H^3(G,\mathbb{C}^\times)$.
\end{proof}

We can now prove the classification of $G$-graded extensions of $\cC$, up to equivalence of extensions. Given a $G$-graded extension of $\cC$, we can extract a group homomorphism $c: G \to \BrPic(\cC)$, a system of products $M$ for $c$, and a system of associators $A$ for $(c,M)$. Conversely given this triple of data, we can reconstruct the $G$-extension as described earlier in this Section. Thus to classify $G$-graded extensions of $\cC$, we have to classify triples $(c,M,A)$.

Let $c$ be any group homomorphism $G \to \BrPic(B)$, then Lemma~\ref{lem:nonempo3} says we can pick a system of products for $c$ if and only if $o_3(c)$ vanishes. As $o_3(c)$ vanishes, Lemma~\ref{lem:tor2} says that systems of products for $c$ form a torsor over $H^2(G, \Inv(\cZ(\cC)))$. Let $M$ be a system of products for $c$, then Lemma~\ref{lem:nonempo4} says we can pick a system of associators for $(c,M)$ if and only if $o_4(c,M)$ vanishes. As $o_4(c,M)$ vanishes, Lemma~\ref{lem:tor3} says that systems of associators for $(c,M)$ form a torsor over $H^3(G, \mathbb{C}^\times)$. 

This completes the classification of $G$-graded extensions of $\cC$, up to equivalence of extensions. 

\section{Twisted bimodule functors}\label{sec:twist}

In this section we introduce the abstract nonsense for twisted bimodule functors, and natural transformations between them. If the reader desires motivation for such abstract nonsense then we encourage them to skip to the next section, where the motivating example can be found.
\begin{defn}
Let $\cM$ and $\cN$ be invertible bimodule categories over a fusion category $\cC$, and $(\cF,\tau)$ a monoidal auto-equivalence of $\cC$. An $\cF$-twisted bimodule functor $\cc{H}: \cM \to \cN$ is an abelian functor, along with natural isomorphisms $L_{X,M}:  \cF(X) \triangleright \cH(M) \to \cH(X \triangleright M) $ and $R_{M,X}: \cH(M) \triangleleft \cF(X) \to \cH(M \triangleleft X)$ satisfying:

$$\begin{CD}\label{dia:com1}
\cF(X) \triangleright (\cF(Y) \triangleright \cH(M))@> l^\cN_{F(X),F(Y),H(M)}>> (\cF(X)\otimes \cF(Y) )\triangleright \cH(M)\\
@V \id_{\cF(X)}\triangleright L_{Y,M} VV @VV \tau_{X,Y}\otimes \id_{\cH(M)} V\\
\cF(X) \triangleright \cH(Y\triangleright M)@.\cF(X\otimes Y) \triangleright \cH(M) \\
@V L_{X,Y\triangleright M} VV @VV  L_{X\otimes Y,M} V\\
\cH(X\triangleright Y \triangleright m)@> \cH(l^\cM_{X,Y,M}) >> \cH((X\otimes Y)\triangleright M)
\end{CD}$$

$$\begin{CD}
(\cH(M)\triangleleft \cF(X)) \triangleleft \cF(Y)@> r^\cN_{\cH(M),\cF(X),\cF(Y)}>> \cH(M) \triangleleft (\cF(X)\otimes \cF(Y))\\
@V R_{M,X}\triangleleft \id_{\cF(Y)} VV @VV \id_{\cH(M)}\otimes \tau_{X,Y} V\\
\cH(M \triangleleft X) \triangleleft \cF(Y)@.\cH(M)\triangleleft \cF(X\otimes Y) \\
@V R_{M\otimes X, Y} VV @VV  R_{M,X\otimes Y} V\\
\cH(M \triangleleft X \triangleleft Y)@> H(r^\cM_{M,X,Y}) >> \cH(M \triangleleft (X\otimes Y))
\end{CD}$$ and

$$\begin{CD}
( \cF(X) \triangleright(\cH(M) )\triangleleft \cF(Y) @> c^\cN_{\cF(X),\cH(M),\cF(Y)}>> \cF(X) \triangleright (\cH(M) \triangleleft \cF(Y))\\
@V L_{X,M} \triangleleft \id_{\cF(Y)} VV @VV \id_{\cF(X)} \triangleright R_{M,Y} V\\
\cH(X\triangleright M) \triangleleft \cF(Y) @.  \cF(X) \triangleright \cH(M \triangleleft Y)\\
@V R_{X\triangleright M, Y} VV @VV L_{X,M\triangleleft Y} V\\
\cH( (X \triangleright M)\triangleleft Y)@>  \cH(c^\cM_{X,M,Y})>> \cH( X \triangleright (M \triangleleft Y))
\end{CD}$$
\end{defn}
While we don't expect this definition to find any use outside of this paper, there is a slight generalization which could prove useful. If we relax the condition that $\cF$ be an auto-equivalence of $\cC$, to just being an arbitary monoidal functor between two fusion categories, then we would be able to consider functors between modules over different categories. For example $E_7$ is a module over both the $A_{17}$ and $D_{10}$ fusion categories. Up to twisting by the free module functor $A_{17} \to D_{10}$ these two modules are equivalent. We neglect to define this generalization as it is far beyond the scope of this paper.

\begin{remark}
We will surpress the structure morphisms $R$ and $L$ when declaring $\cF$-twisted bimodule functors from now on. Given an $\cF$-twisted bimodule functor $\cH$ it will be implicitly assumed that $R^\cH$ and $L^\cH$ are the $\cF$-twisted bimodule functor structure maps for $\cH$.
\end{remark}

Given two $\cF$-twisted bimodule functors we can define the tensor product in an almost analogous way to the untwisted case, see \cite{MR2678824}.

\begin{remark}
Given $\cN_1$ and $\cN_2$ two $\cC$-bimodules we write $B_{\cN_1, \cN_2}$ for the canonical $\cC$-balanced bimodule functor $\cN_1\times \cN_2 \to \cN_1\boxtimes \cN_2$. We write $b^{B_{\cN_1, \cN_2}}$ for the balancing isomorphisms of this functor.
\end{remark}
\begin{defn}\label{def:twten}
Let $\cH_1 : \cM_1 \to \cN_1$ and $\cH_2 : \cM_2 \to \cN_2$ be $\cF$-twisted bimodule functors. There is a functor $\cM_1 \times \cM_2 \to \cN_1 \boxtimes \cN_2$ given by
\begin{equation*}
B_{\cN_1,\cN_2} \circ (\cH_1 \times \cH_2).
\end{equation*}
This functor is $\cC$-balanced by the natural isomorphism
\begin{align*}
& B_{\cN_1,\cN_2}(\id_{H_1(m_1)} \times {L^{\cH_2}_{X,m_2}})    \cdot  b^{B_{\cN_1,\cN_2}}_{\cH_1(m_1),\cF(X),\cH_2(m_2)} \cdot B_{\cN_1,\cN_2}({R^{\cH_1}_{m_1,X}}^{-1}\times \id_{\cH_2(m_2)}) \\
 & : B_{\cN_1,\cN_2}( \cH_1( m_1 \triangleleft X) , \cH_2(m_2 )   ) \to B_{\cN_1,\cN_2}( \cH_1( m_1 ) , \cH_2(X \triangleright m_2 )   ).
\end{align*}
Thus via Equation~\eqref{eq:funball}, the above functor induces a functor 
\[\cH_1 \boxtimes \cH_2 : \cM_1\boxtimes \cM_2 \to \cN_1 \boxtimes \cN_2.\]
We define the relative tensor product of the functors $\cH_1$ and $\cH_2$ as the above functor $\cH_1 \boxtimes \cH_2$.
\end{defn}

One can also compose twisted bimodule functors.
\begin{defn}
Let $\cH_1: \cN\to \cM$, and $\cH_2: \cM\to \cP$ be $\cF_1$ and $\cF_2$-twisted bimodule functors respectively. We can compose $\cH_1$ and $\cH_2$ as follows:
\begin{align*}
\cH_2 \circ \cH_1 (m) :=& \cH_2(\cH_1(m)), \\
L^{\cH_2\circ \cH_1}_{X,m} :=& \cH_2( L^{\cH_1}_{X,m}) \cdot L^{\cH_2}_{\cF_1(X), \cH_1(m)}, \\
R^{\cH_2\circ \cH_1}_{m,X} :=& \cH_2( R^{\cH_1}_{m,X}) \cdot R^{\cH_2}_{\cH_1(m),\cF_1(X)},
\end{align*}
to obtain a $(\cF_2\circ \cF_1)$-twisted bimodule functor.
\end{defn}

We now define natural transformations between $\cF$-twisted bimodule functors. Again this is a straightforward generalisation of the untwisted case.
\begin{defn}
Let $\cH_1$ and $\cH_2$ be $\cF$-twisted bimodule functors with the same source and target. An $\cF$-twisted natural transformation $\mu: \cH_1 \to \cH_2$ is a natural transformation satisfying:
$$\begin{CD}
\cF(X) \triangleright  \cH_1(m)@>\id_{\cF(X)} \triangleright \mu_{m}>> \cF(X) \triangleright  \cH_2(m)\\
@V L^{\cH_1}_{X,m} VV @VV L^{\cH_2}_{X,m} V\\
\cH_1(X\triangleright m)@> \mu_{X\triangleright m} >> \cH_2(X\triangleright  m)
\end{CD} \text{ and } 
\begin{CD}
\cH_1(m) \triangleleft \cF(X)@>  \mu_m \triangleleft \id_{\cF(X)}>> \cH_2(m) \triangleleft \cF(X)\\
@V R^{\cH_1}_{m,X} VV @VV R^{\cH_2}_{m,X} V\\
\cH_1(m\triangleleft X)@> \mu_{m\triangleleft X} >> \cH_2(m \triangleleft X)
\end{CD}$$
\end{defn}
These $\cF$-twisted natural transformations have natural vertical and horizontal compositions, which we denote $\cdot$ and $\circ$ respectively. The details of these operations are straightforward so we omit the details. The relative tensor product of two $\cF$-twisted natural transformations is less clear, so we include the details.

\begin{defn}
Let $\cH_1,\cH_1' : \cM_1 \to \cN_1$, and $\cH_2,\cH_2': \cM_2 \to \cN_2$ be $\cF$-twisted bimodule functors, and $\mu_1: \cH_1\to \cH_1'$ and $\mu_2: \cH_2 \to \cH_2'$ be $\cF$-twisted natural transformations. Consider the natural isomorphism 
\begin{equation*}
\id_{B_{\cH_1',\cH_2'}}\circ(\mu_1 \times \mu_2): B_{\cN_1,\cN_2}\circ (\cH_1 \times \cH_2) \to  B_{\cN_1,\cN_2}\circ (\cH_1' \times \cH_2').
\end{equation*}
It can be checked that this natural isomorphism is $\cC$-balanced, and thus via Equation~\eqref{eq:funball} induces a natural transformation
\begin{equation*}
\mu_1\boxtimes \mu_2 : \cH_1\boxtimes \cH_2 \to \cH_1'\boxtimes \cH_2'.
\end{equation*}
We define the relative tensor product of $\mu_1$ and $\mu_2$ as the above $\mu_1\boxtimes \mu_2$.
\end{defn}

If we restrict $\cF$ to be the identity functor on $\cC$ then we can assemble the above information into the $3$-category $\underline{\underline{\BrPic}}(\cC)$. When $\cF$ is non-trivial one wonders what higher categorical structure the above information fits into. Unfortunately we haven't been able to provide a satisfactory answer to this question. The best we can guess is some sort of higher bimodule category over $\underline{\underline{\BrPic}}(\cC)$. We haven't put so much thought into this question due to the following Theorem which shows that all the above information for twisted bimodule functors is actually contained in $\underline{\underline{\BrPic}}(\cC)$, though in a not so obvious manner. 
 
 \begin{remark}
 We remind the reader that the definition of the $\cC$-bimodules $\cC_\cF$ and $_\cF\cC$ can be found in Definition~\ref{def:righttwist}.
 \end{remark}

\begin{thm}\label{lem:untwisting}
Let $\cC$ a fusion category, and $\cF$ a monoidal auto-equivalence of $\cC$. Then
\begin{enumerate}
\item\label{thm:part1} $\cF$-twisted bimodule functors $\cH: \cM \to \cN$ are in bijection (up to natural isomorphism) with untwisted bimodule functors 
\begin{equation*}
\widehat{\cH} : \cC_\cF\boxtimes \cM \boxtimes _\cF\cC  \to \cN.
\end{equation*}

\item\label{thm:part2} Suppose $\cH_1:\cM_1 \to \cN_1$ and $\cH_2:\cM_2 \to \cN_2$ are $\cF$-twisted bimodule functors, then there exists a natural isomorphism of bimodule functors:
\begin{equation*}
\Omega^\boxtimes_{\cH_1,\cH_2}: \widehat{\cH_1\boxtimes \cH_2}\circ (\Id_{\cC_\cF} \boxtimes \Id_{\cM_1} \boxtimes \operatorname{R}_{_\cF\cC} \boxtimes \Id_{\cM_2} \boxtimes \Id_{_\cF\cC}) \to ( \widehat{\cH_1} \boxtimes \widehat{\cH_2}),
\end{equation*}
where $\operatorname{R}_{_\cF\cC}$ is a specific choice of bimodule equivalence $_\cF\cC \boxtimes \cC_\cF \to \cC$.

\item\label{thm:part3} Suppose $\cH_1:\cM \to \cN$ is an $\cF_1$-twisted bimodule functor and $\cH_2:\cN\to \cP$ is an $\cF_2$-twisted bimodule functor, then there exists a natural isomorphism of bimodule functors:
\begin{equation*}
\Omega^\circ_{\cH_1,\cH_2}:\widehat{\cH_2\circ \cH_1}\circ(S_{\cF_2,\cF_1} \boxtimes \Id_{\cM} \boxtimes S^*_{\cF_1,\cF_2}) \simeq  \widehat{\cH_2} \circ (\Id_{\cC_{\cF_2}} \boxtimes \widehat{\cH_1} \boxtimes \Id_{_{\cF_2}\cC}),
\end{equation*}
where $S_{\cF_2,\cF_1}: \cC_{\cF_2}\boxtimes \cC_{\cF_1} \to \cC_{\cF_2\circ \cF_1}$ and $S^*_{\cF_1,\cF_2}: _{\cF_1}\cC \boxtimes  _{\cF_2}\cC \to _{\cF_2\circ \cF_1}\cC$ are certain bimodule equivalences.

\item\label{thm:part4} Natural isomorphisms of $\cF$-twisted bimodule functors $\mu: \cH_1 \to \cH_2$ are in bijection with natural isomorphisms of untwisted bimodule functors $\widehat{\mu} : \widehat{\cH_1} \to \widehat{\cH_2}$.

\item\label{thm:part5} If $\mu_1: \cH_1 \to \cH_1'$, $\mu_2: \cH_2 \to \cH_2'$ are natural isomorphisms of $\cF$-twisted bimodule functors $\cH_1,\cH_1' : \cM_1 \to \cN_1$ and $\cH_2,\cH_2':\cM_2 \to \cN_2$ then
\begin{equation*}
\Omega^\boxtimes_{\cH_1',\cH_2'}[\widehat{\mu_1 \boxtimes \mu_2}\circ (\id_{\Id_{\cC_\cF}} \boxtimes \id_{\Id_{\cM_1}}  \boxtimes \id_{\operatorname{R_{_\cF\cC}}} \boxtimes \id_{\Id_{\cM_2}} \boxtimes \id_{\Id_{_\cF\cC}})] = [ \widehat{\mu_1} \boxtimes \widehat{\mu_2}]\Omega^\boxtimes_{\cH_1,\cH_2}.
\end{equation*}

\item\label{thm:part6} Let $\cH_1,\cH_1' : \cM\to \cN$ be $\cF_1$-twisted bimodule functors, and $\cH_2,\cH_2': \cN\to \cP$ be $\cF_2$-twisted bimodule functors. Let $\mu_1: \cH_1\to \cH_1'$ be a $\cF_1$-twisted natural transformation, and $\mu_2:\cH_2 \to \cH_2'$ be a $\cF_2$-twisted natural transformation. Then
\begin{equation*}
\Omega^\circ_{\cH_1',\cH_2'}[ \widehat{\mu_2\circ \mu_1} \circ (\id_{S_{\cF_2,\cF_1}} \boxtimes \id_{\Id_{\cM}} \boxtimes \id_{S^*_{\cF_1,\cF_2}})] = [\widehat{\mu_2} \circ ( \id_{\Id_{\cC_{\cF_2}}} \boxtimes \widehat{\mu_1} \boxtimes \id_{\Id_{_{\cF_2}\cC}})] \Omega^\circ_{\cH_1,\cH_2}.
\end{equation*}

\item\label{thm:part7} If $\mu_1 : \cH \to \cH'$ and $\mu_2: \cH' \to \cH''$ are natural isomorphisms of $\cF$-twisted bimodule functors then
\begin{equation*}
\widehat{\mu_2\mu_1} = \widehat{\mu_2}\widehat{\mu_1}.
\end{equation*}

\end{enumerate}
\end{thm}
\begin{proof}
The proof of this Theorem can be found in Appendix~\ref{app:proof}.
\end{proof}

 The strength of this Theorem is that it allows us to perform all calculations regarding $\cF$-twisted bimodule functors in the 3-category $\BBBrPic(\cC)$, where we have a nice graphical calculus to work with. To see this Theorem in action, we point the reader to the next section, and one can see that applying this Theorem is not as bad as the messy statement makes it appear.
 
 The above Theorem begs to be a functor of 3-categories, with the families of moprhisms $\Omega^\boxtimes$ and $\Omega^\circ$ playing the roles of the higher tensorators. However without an adequate description of the ``3-category of twisted bimodule functors'', it is impossible to state the theorem as such. We are forced to present the Theorem in its current messy state, which is nevertheless adequate for our uses later in this paper.
 
  As the proof of this Theorem is essentially constructing a functor of 3-categories, it is extremely technical. We have hidden the proof in Appendix~\ref{app:proof}, which we advise the reader to avoid.

\section{Data determined by a graded equivalence of graded categories}\label{sec:data}
We now begin our classification of $\cC$-based equivalences between two $G$-graded extensions of $\cC$. We start by de-constructing a $\cC$-based equivalence between extensions, and extracting four key pieces of data.

Let $\cD$ and $\cE$ be two $G$-graded extensions of a fusion category $\cC$, which, by the classification of $G$-graded extensions of $\cC$ (see \cite{MR2677836} \& Section~\ref{sec:gradedex}), corresponding to triples $(d,M^\cD,A^\cD)$ and $(e,M^\cE,A^\cE)$ respectively. Let  
\[   (\mathcal{F}, \tau): \cD \to  \cE   \]
be a $\cC$-based equivalence. 

\noindent The first piece of data we can extract from $\mathcal{F}$ is a monoidal auto-equivalence $\mathcal{F}_e : \cC \to \cC$. 

\begin{trivlist}\leftskip=3em
\item As $\mathcal{F}$ is $\cC$-based we have by definition that $\mathcal{F}(\cC) \subseteq \cC$. Therefore, as $\mathcal{F}$ is an equivalence, we have that 
\[  \mathcal{F}_e := \mathcal{F}|_\cC : \cC \to \cC\]
is a monoidal auto-equivalence of $\cC$.
\end{trivlist}

\noindent The second piece of data is an automorphism $\phi$ of the grading group $G$.
\begin{trivlist}\leftskip=3em
 \item Let $X_g$ and $Y_g$ be two simple objects in the $g$-graded component of $\cD$. Then $X_g$ and $Y_g$ are objects in the invertible bimodule category $d_g$. As $d_g$ is an indecomposible $\cC$-bimodule category, there exists $Z_e \in \cC$ such that $X_g$ is a sub-object of $Z_e \otimes Y_g$. As $\mathcal{F}$ is monoidal, we then have that $\cF(X_g)$ is a sub-object of $\cF(Z_e) \otimes \cF(Y_g)$. Using the fact that $\cF$ is a $\cC$-based equivalence, we see that $\cF(Z_e) \in \cC$, and thus the simple objects $\cF(X_g)$ and $\cF(Y_g)$ live in the same graded piece of $\cE$. Thus the functor $\cF$ induces a well-defined map on the grading group $G$. As $\cF$ is monoidal equivalence, this map is in fact an automorphism of $G$. We call this automorphism $\phi^{-1}$, and thus $\phi$ is the automorphism of $G$ satisfying
\[    \cF(  d_{\phi(g)} ) \subseteq  e_{g}.\]
\end{trivlist}

\noindent The third piece of data is a collection of untwisted bimodule functors $\widehat{F}$. 
\begin{trivlist}\leftskip=3em
\item The restriction of $\cF$ to $d_{\phi(g)}$ gives a functor 
\begin{equation}\label{eq:twistfun} F_g:=  \cF|_{d_{\phi(g)}} :   d_{\phi(g)} \overset{\sim} {\rightarrow} e_g. \end{equation}
The structure map $\tau$ equips $F_g$ with the structure of an $\cF_e$-twisted bimodule equivalence. An application of Theorem~\ref{lem:untwisting} to the collection $\{F \}$ then gives a collection of untwisted bimodule functors:
\begin{equation}\label{eq:Fcol} \widehat{F} :=  \{ \widehat{F_g} : \cC_{\cF_e} \boxtimes  d_{\phi(g)}\boxtimes _{\cF_e}\cC \to e_g \} =   \left \{ \raisebox{-.5\height}{ \includegraphics[scale = .7]{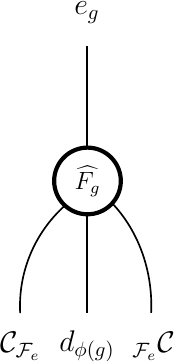}}  \right \}. \end{equation}
\end{trivlist}

\noindent The final piece of data is a collection of untwisted bimodule natural isomorphisms $\widehat{T}$. 
\begin{trivlist}\leftskip=3em
\item The restriction of the natural isomorphism $\tau$ onto the graded pieces $d_{\phi(g)}$ and $d_{\phi(h)}$ gives a natural isomorphism 
\[  \tau|_{d_{\phi(g)}, d_{\phi(h)}} : \otimes^\cE|_{e_g,e_h} \circ   (F_g \times F_h) \overset{\sim}{\rightarrow}  F_{gh} \circ \otimes^{\cD}|_{\phi(g),\phi(h)} .   \]
It is routine to check that this isomorphism is $\cC$-balanced, and thus induces, via Equation~\eqref{eq:funball}, a natural isomorphism:
\begin{equation}\label{data:tensorator}
T_{g,h} : M_{g, h}^\cE \circ (F_g \boxtimes F_h) \overset{\sim}{\rightarrow}  F_{gh} \circ M_{\phi(g),\phi(h)}^\cD.
\end{equation}
A direct calculation shows that the natural isomorphism $T_{g,h}$ is $\cF_e$-twisted. 

An application of Theorem~\ref{lem:untwisting} to the collection $\{ T \}$ gives a collection of untwisted bimodule natural isomorphisms:
\begin{equation}\label{data:tenso} \widehat{T}:= \left \{\widehat{T_{g,h}}(\id_{M_{g,h}^\cE} \circ {\Omega^\boxtimes_{F_g,F_h}}^{-1})\right \} = \left \{\raisebox{-.5\height}{ \includegraphics[scale = .5]{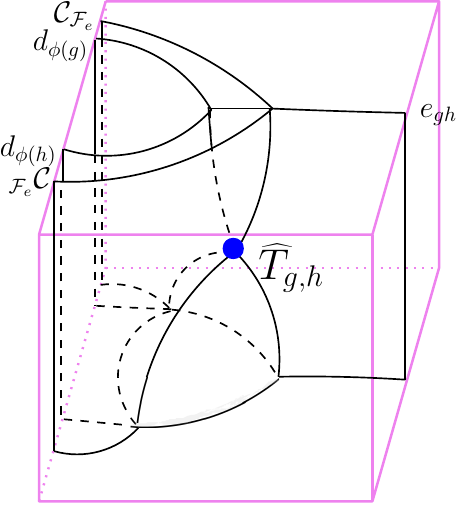}} \right \}. \end{equation}
\end{trivlist}
This quadruple of data constructed from $(\cF, \tau)$ satisfies some conditions. The tensorator axiom implies the following equality of $\cF_e$-twisted natural isomorphisms
\begin{align}\label{data:3obs}
&[T_{f,gh}\circ( \id_{\Id_{d_{\phi(f)}}} \boxtimes \id_{M^\cD_{\phi(g),\phi(h)}})] 
[ \id_{M^\cE_{f,gh}}\circ( \id_{F_f} \boxtimes T_{g,h})]
  [A^\cE_{f,g,h}\circ (\id_{F_f}\boxtimes \id_{F_g}\boxtimes \id_{F_h})]=\\
&[ \id_{F_{fgh}}  \circ {A^\cD_{\phi(f),\phi(g),\phi(h)}} ] 
[  T_{fg,h} \circ ( \id_{M^\cD_{\phi(f),\phi(g)}}\boxtimes \id_{\Id_{d_{\phi(h)}}})] 
[ \id_{M^\cE_{fg,h}} \circ (T_{f,g} \boxtimes \id_{F_h})],
\end{align}
which via a rearrangement, and an application of Theorem~\ref{lem:untwisting} is equivalent to the bimodule natural automorphism $v(\cF_e,\phi,\widehat{F},\widehat{T})_{f,g,h}$ defined in Figure~\ref{eq:v3} being the trivial automorphism of \[H_{f,g,h} := \raisebox{-.5\height}{ \includegraphics[scale = .5]{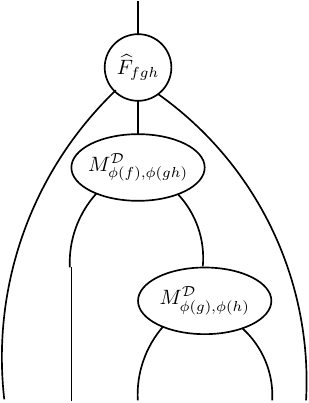}}\] for all $f,g,h \in G$.
\begin{figure}
\caption{Graphical description of $v(\cF_e,\phi,\widehat{F},\widehat{T})_{f,g,h}$}\label{eq:v3}
\[
\begin{CD}
\raisebox{-.5\height}{ \includegraphics[scale = .4]{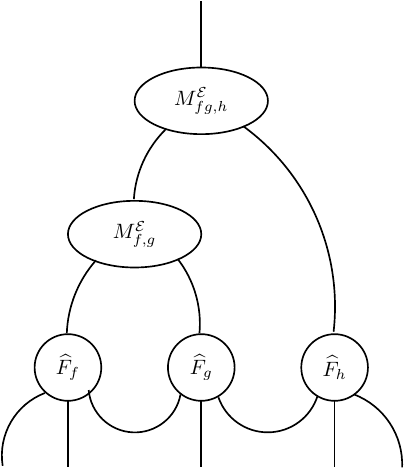}} @>\raisebox{-.5\height}{ \includegraphics[scale = .4]{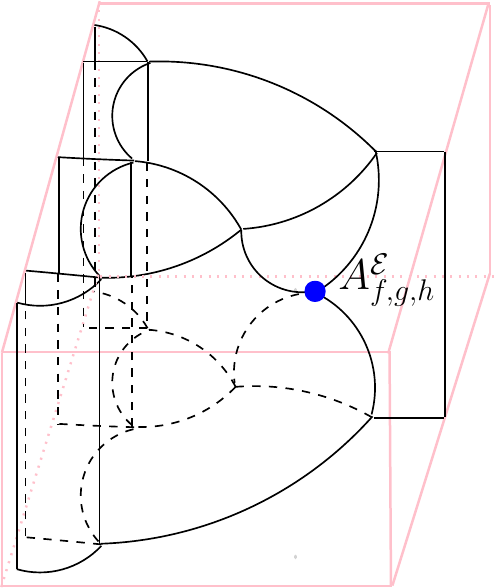}}>> \raisebox{-.5\height}{ \includegraphics[scale = .4]{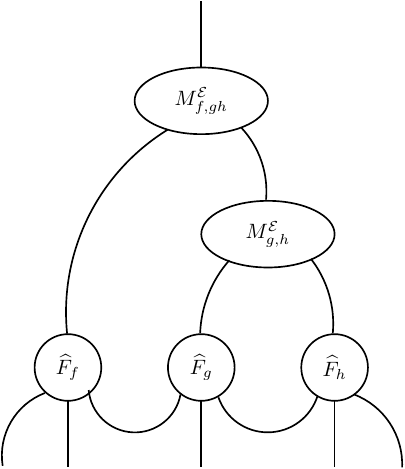}} \\
@A\raisebox{-.5\height}{ \includegraphics[scale = .4]{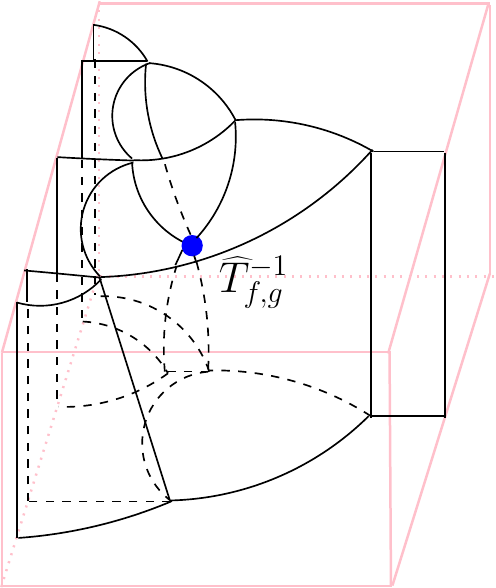}}AA @VV\raisebox{-.5\height}{ \includegraphics[scale = .4]{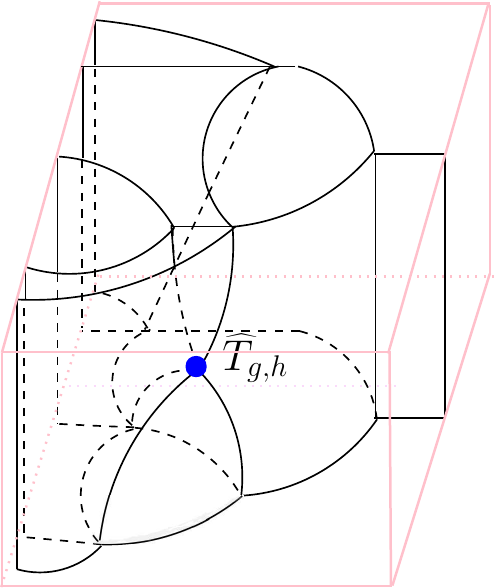}}V\\
\raisebox{-.5\height}{ \includegraphics[scale = .4]{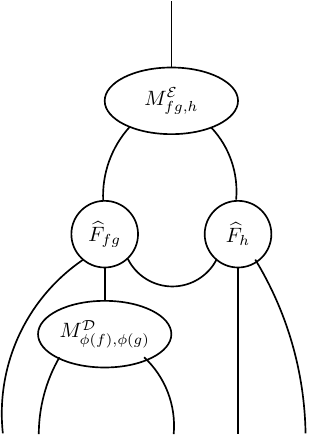}} @. \raisebox{-.5\height}{ \includegraphics[scale = .4]{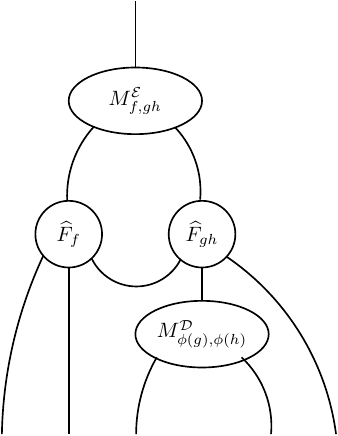}} \\
@A\raisebox{-.5\height}{ \includegraphics[scale = .4]{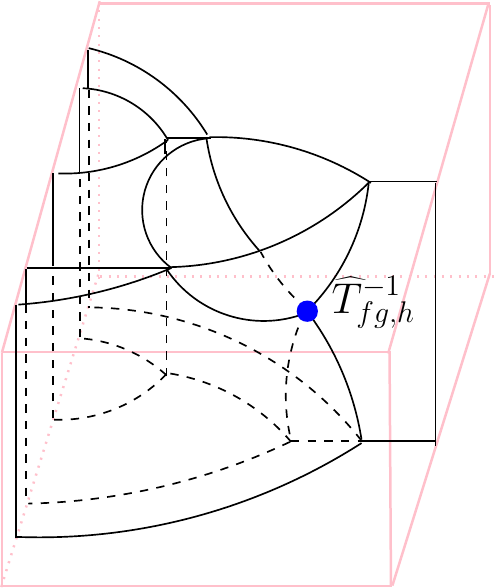}}AA @VV\raisebox{-.5\height}{ \includegraphics[scale = .4]{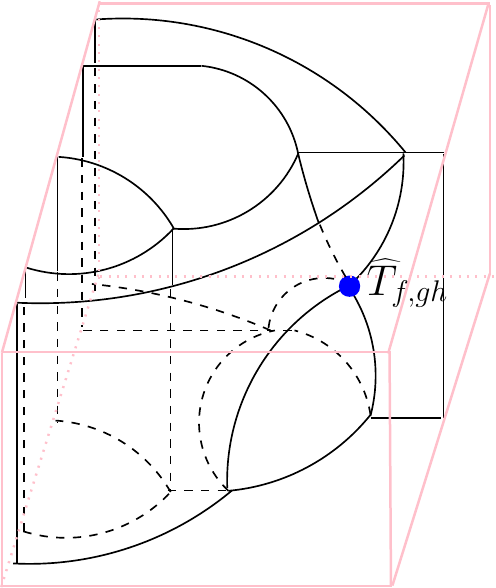}}V\\
\raisebox{-.5\height}{ \includegraphics[scale = .4]{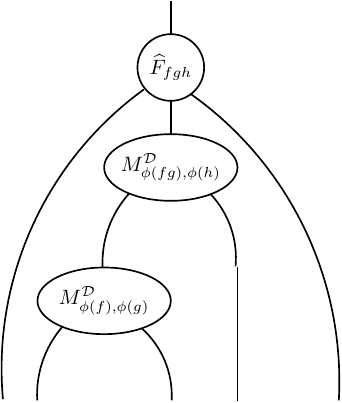}} @<<\raisebox{-.5\height}{ \includegraphics[scale = .4]{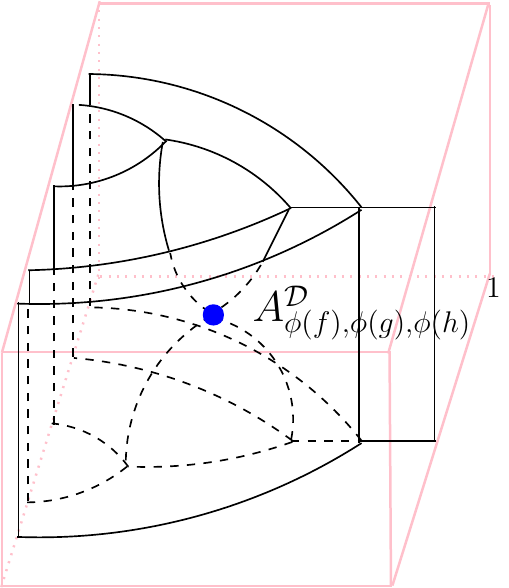}}< \raisebox{-.5\height}{ \includegraphics[scale = .4]{hex4}} \\
\end{CD}
\]\end{figure}
														
In summary the $\cC$-based auto-equivalence $(\cF,\mu)$ determines the following quadruple of data:
\begin{enumerate}
\item A monoidal auto-equivalence $\cF_e: \cC \to \cC$,

\item a group automorphism $\phi: G \to G$,

\item a collection $\widehat{F}$ of bimodule functors,

\item a collection $\widehat{T}$ of bimodule natural isomorphisms, such that $v(\cF_e,\phi,\widehat{F},\widehat{T})_{f,g,h}$ is trivial for all $f,g,h \in G$.

\end{enumerate} 

Conversely, given any such quadruple $(\cF_e,\phi,\widehat{F},\widehat{T})$, with $v(\cF_e,\phi,\widehat{F},\widehat{T})$ vanishing, we can construct a $\cC$-based equivalence $\cD\to \cE$ as follows. To start we use Theorem~\ref{lem:untwisting} on the collections $\widehat{F}$ to get a collection of $\cF_e$-twisted bimodule functors
\[  F_g:=  \cF_{d_{\phi(g)}} :   d_{\phi(g)} \overset{\sim} {\rightarrow} e_g. \]
We assemble this collection of $\cF_e$-twisted bimodule functors to get an abelian functor
\[   \bigoplus_G F_g : \cD \to \cE.\]

Next we use Theorem~\ref{lem:untwisting} on the collection $\widehat{T}$ to get a collection of $\cF_e$-twisted bimodule natural isomorphisms
\[ M_{g, h}^\cE \circ (F_g \boxtimes F_h) \overset{\sim}{\rightarrow}  F_{gh} \circ M_{\phi(g),\phi(h)}^\cD. \]
These bimodule natural isomorphism are $\cC$-balanced, and thus via Equation~\ref{eq:funball} give natural isomorphisms
\[   \otimes^\cE|_{e_g,e_h} \circ   (F_g \times F_h) \overset{\sim}{\rightarrow}  F_{gh} \circ \otimes^{\cD}|_{\phi(g),\phi(h)},   \]
which serve as a tensorator for the above abelian functor.  The fact that $v(\cF_e,\phi,\widehat{F},\widehat{T})$ vanishes ensures that our tensorator satisfies the hexagon equation. Thus we have constructed a $\cC$-based equivalence $\cD\to \cE$.

It is straightforward to check that these de-construction and re-contruction processes are inverse to each other. Thus $\cC$-based equivalences $\cD \to \cE$ are classified, up to natural isomorphism of $\cC$-based equivalences (Definition~\ref{def:basediso}), by quadruples of data as described above.

The goal of the next several sections will be to classify quadruples $(\cF_e,\phi,\widehat{F},\widehat{T})$ in terms of cohomological data, in a similar vein to the classification of $G$-graded extensions we covered in Section~\ref{sec:gradedex}.

\begin{remark}\label{lem:twistquad}
$\cC$-based equivalence $\cD \to \cE$ are also classified by quadruples $(\cF_e,\phi, F, T)$, where $F$ is a collection of $\cF_e$-twisted bimodule functors as in Equation~\eqref{eq:twistfun}, and $T$ is a collection of $\cF_e$-twisted bimodule natural isomorphisms as in Equation~\eqref{data:tensorator}, satisfying Equation~\eqref{data:3obs}. However these quadruples are much harder to directly classify, so we restrict our attention to the untwisted quadruples for this paper. Converting from a quadruple of the form $(\cF_e,\phi, F, T)$, to one of the form $(\cF_e,\phi, \widehat{F}, \widehat{T})$ is exactly an application of Theorem~\ref{lem:untwisting}.
\end{remark}

\subsection*{Composition of quadruples}
Suppose we have three $G$-graded extensions of $\cC$ which we call $\cD$, $\cE$, and $\cY$, and $\cC$-based equivalences $\cD \to  \cE$ and $\cE \to \cY$. As discussed above, these two $\cC$-based equivalences correspond to quadruples $(\cF_e, \phi, \widehat{F}, \widehat{T})$ and $(\cF_e', \phi',  \widehat{F}', \widehat{T}')$ respectively. Clearly we can compose the two $\cC$-based equivalences to obtain a $\cC$-based equivalence $\cD \to \cY$. The natural question is to ask what does the quadruple for the   composition look like.  By reconstructing the equivalences from the quadruples, composing, then extracting the resulting quadruple we can arrive at an answer. However the resulting quadruple is complicated so we omit the details. For twisted quadruples as in Remark~\ref{lem:twistquad} the answer is simpler. Let $(\cF_e, \phi, F , T )$ and $(\cF_e', \phi',  F' , T')$ be two such twisted quadruples, with appropriate sources and targets, then the resulting twisted quadruple of their composition is 
\[ (\cF_e', \phi', F_g', T_{g,h}')\circ (\cF_e, \phi, F_g, T_{g,h}) = (\cF_e' \circ F_e, \phi' \circ \phi, F_{\phi(g)}' \circ F_g, [T'_{\phi(g),\phi(h)} \circ (\id_{F_g} \boxtimes \id_{F_h})][ \id_{F'_{\phi(gh)}} \circ T_{g,h}]). \]

\section{Existence and classification of quasi-monoidal $\cC$-based equivalences}\label{sec:quasi}
The goal for this section is the intermediate step of classifying of all quasi-monoidal $\cC$-based equivalences between two $G$-graded extensions of $\cC$. Recall a quasi-monoidal equivalence $\cJ$ is an equivalence that admits an (unspecified) isomorphism $\cJ(X) \otimes \cJ(Y)\to \cJ(X\otimes Y)$ not satisfying any additional conditions.

Let $\cD$ and $\cE$ be two $G$-graded extensions of $\cC$, which correspond to the triples $(d,M^\cD,A^\cD)$ and $(e,M^\cE,A^\cE)$ respectively. It follows from the discussion of Section~\ref{sec:data} that quasi-monoidal $\cC$-based equivalences $\cD \to \cE$ are classified by triples $(\cF_e,\phi,\widehat{F})$ where $\cF_e$ is a monoidal auto-equivalence of $\cC$, $\phi$ is an automorphism of $G$, and $\widehat{F}$ is a collection of untwisted bimodule functors as in Equation~\eqref{eq:Fcol}, with the collection $\widehat{F}$ satisfying the condition
\begin{equation}\label{eq:quasicond} \raisebox{-.5\height}{ \includegraphics[scale = .6]{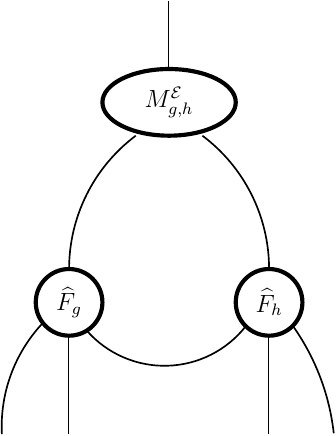}}  \eq \raisebox{-.5\height}{ \includegraphics[scale = .6]{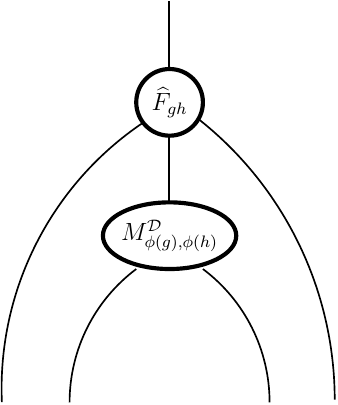}}.   \end{equation}
for all $g,h \in G$.

To begin we choose $\cF_e$ and $\phi$. For us to pick a collection $\widehat{F}$ we need for the invertible $\cC$-bimodules
\[     \cC_{\cF_e} \boxtimes d_{\phi(g)} \boxtimes _{\cF_e}\cC \quad \text{ and }\quad e_g,\]
to be equivalent for all $g\in G$. Thus we can choose a collection $\widehat{F}$ as in Equation~\eqref{eq:Fcol} if and only if the the two group homomorphisms $G\to \BrPic(\cC)$ given by
\begin{equation}\label{eq:equation}   \operatorname{Inn} (  _{\cF_e}\cC ) \circ d \circ \phi \quad \text{ and } \quad e  \end{equation}
are equal.

\begin{remark}\label{rem:inn}
Here we write $ \operatorname{Inn} (    _{\cF_e}\cC )$ for the inner automorphism of the group $\BrPic(\cC)$ induced from the object $_{\cF_e}\cC $.
\end{remark} 

Now that we have determined when we can choose a collection $\widehat{F}$, we wish to determine when the collection $\widehat{F}$ satisfies Equation~\ref{eq:quasicond}, and hence the triple $(\cF_e, \phi, \widehat{F})$ gives rise to a quasi-monoidal $\cC$-based equivalence $\cD \to \cE$. To help us determine when Equation~\ref{eq:quasicond} is satisfied we define
\begin{align*} T(\cF_e,\phi,\widehat{F})_{g,h} :=&  \operatorname{R}_{e_{gh}} \circ [ M^\cE_{g,h} \circ ( \widehat{F_g}\boxtimes \widehat{F_h}) \circ ( \Id_{\cC_{\cF_e}} \boxtimes \Id_{d_{\phi(g)}} \boxtimes \operatorname{R}^{-1}_{\cC_{\cF_e}} \boxtimes \Id_{d_{\phi(h)}} \boxtimes \Id_{_{\cF_e}\cC})   \\
& \circ ( \Id_{\cC_{\cF_e}} \boxtimes {M^\cD_{\phi(g),\phi(h)}}^{-1} \boxtimes \Id_{_{\cF_e}\cC}) \circ \widehat{F_{gh}}^{-1} \boxtimes \Id_{e_{gh}^{op}})]\circ \operatorname{R}^{-1}_{e_{gh}} : G\times G \to \Inv(\cZ(\cC)), \end{align*}
and note that Equation~\ref{eq:quasicond} is satisfied if and only if $T(\cF_e,\phi,\widehat{F})_{g,h} $ is trivial for all $g,h \in G$. The function $T(\cF_e,\phi,\widehat{F})$ only depends on the choice of $\cF_e$, $\phi$, and $\widehat{F}$, and not on the choice of maps $R_{e_{gh}} : e_{gh} \boxtimes e_{gh}^{op} \to \cC$. Graphically we draw:
\begin{equation*}
T(\cF_e,\phi,\widehat{F})_{g,h} =\raisebox{-.5\height}{ \includegraphics[scale = .6]{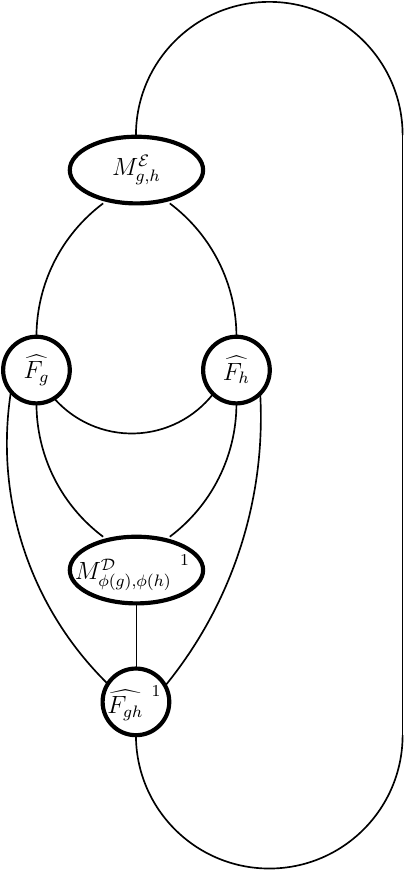}}
\end{equation*}

\begin{defn}\label{def:syseq}
We call a collection of bimodule equivalence $\widehat{F}$ such that $T(\cF_e,\phi,\widehat{F})$ vanishes, a \textit{system of equivalences for $(\cF_e,\phi)$}.
\end{defn}

 We consider systems of equivalences for $(\cF_e,\phi)$ up to quasi-monoidal $\cC$-based isomorphism of the corresponding quasi-monoidal $\cC$-based equivalences. Explicitly this means that two systems of equivalences $\widehat{F}$ and $\widehat{F}'$ for $(\cF_e,\phi)$ are the equivalent if and only if there exists a collection of bimodule natural isomorphisms $\{ \mu_g : \widehat{F}_g \to \widehat{F}'_g \}$ with $\mu_e = \id_{\widehat{F}_e}$.

We directly verify that $T(\cF_e,\phi,\widehat{F})$ is a 2-cocycle valued in $\Inv(\cZ(\cC))$.

\begin{remark}
The action of $G$ on $\Inv(\cZ(\cC))$ is given by the homomorphism $e : G \to \BrPic(\cC)$.
\end{remark}

\begin{lem}
We have
\[ T(\cF_e,\phi,\widehat{F})_{g,h}^{e_f} T(\cF_e,\phi,\widehat{F})_{f,gh} =  T(\cF_e,\phi,\widehat{F})_{f,g}T(\cF_e,\phi,\widehat{F})_{fg,h}   \]
\end{lem}
\begin{proof}
We prove $T(\cF_e,\phi,\widehat{F})$ is a 2-cocycle using the graphial calculus for $\BBrPic(\cC)$. At each stage in this calculation we indicate with a red box the region we will modify, and explain the modification in written words beneath the picture proof.
\begin{align*}
&T(\cF_e,\phi,\widehat{F})_{g,h}^{e_f} T(\cF_e,\phi,\widehat{F})_{f,gh}\eq  \raisebox{-.5\height}{ \includegraphics[scale = .5]{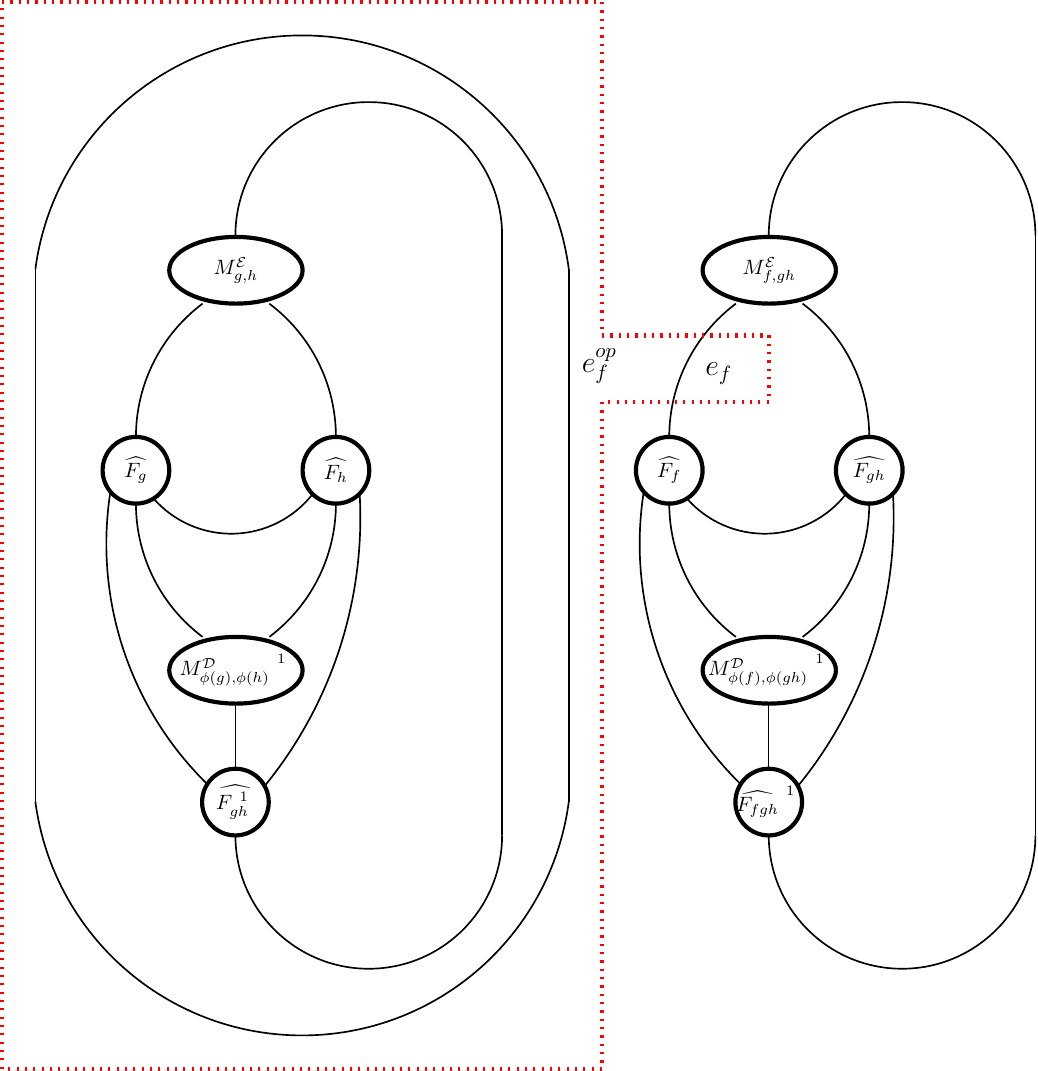}} \\
&= \quad \raisebox{-.5\height}{ \includegraphics[scale = .5]{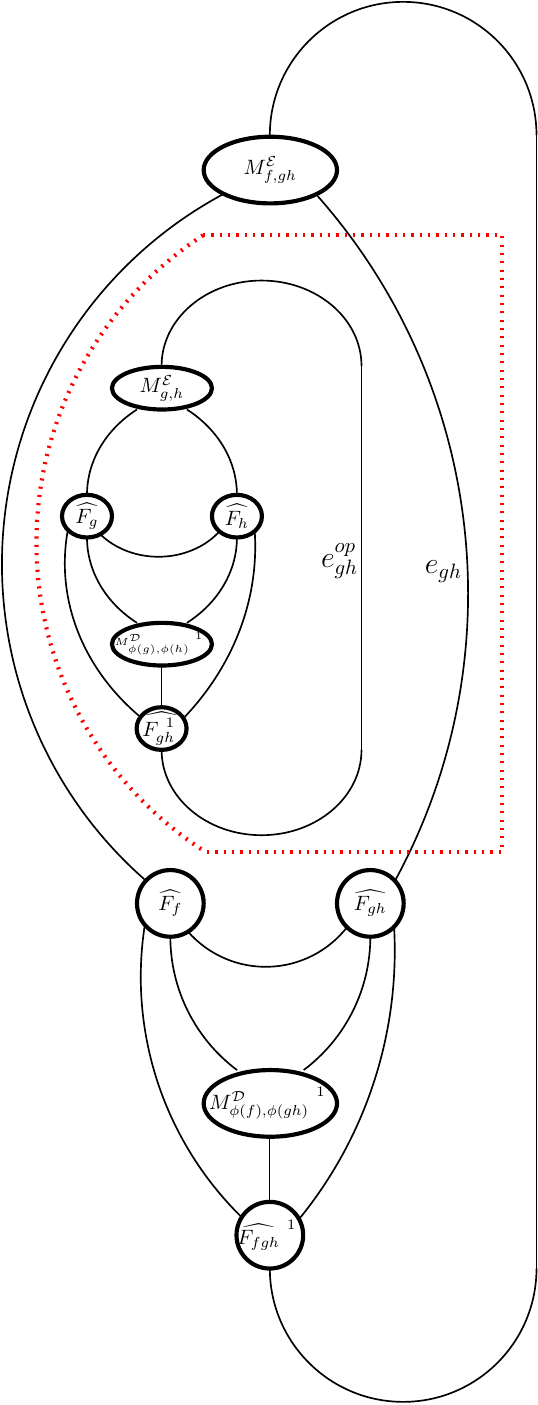}} \eq \raisebox{-.5\height}{ \includegraphics[scale = .5]{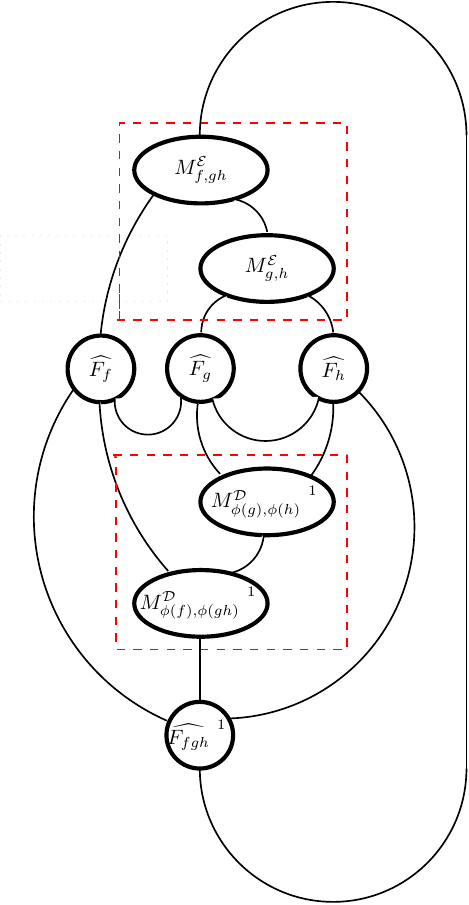}} \eq \raisebox{-.5\height}{ \includegraphics[scale = .6]{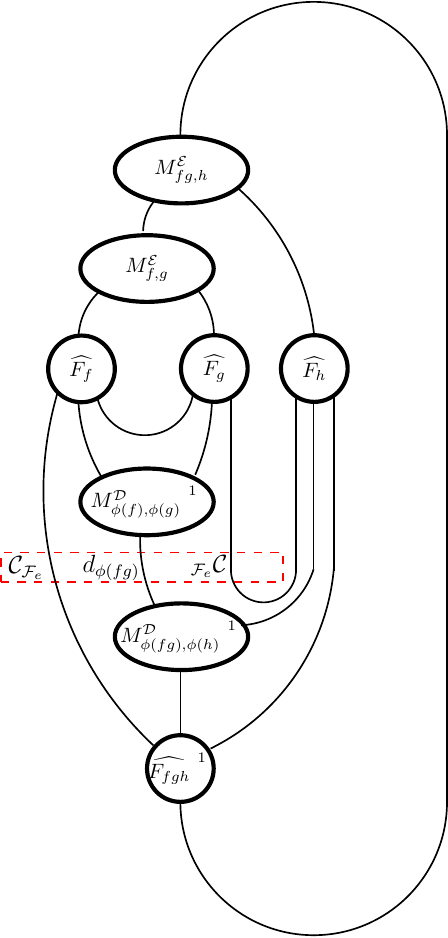}}\\
& = \quad   \raisebox{-.5\height}{ \includegraphics[scale = .6]{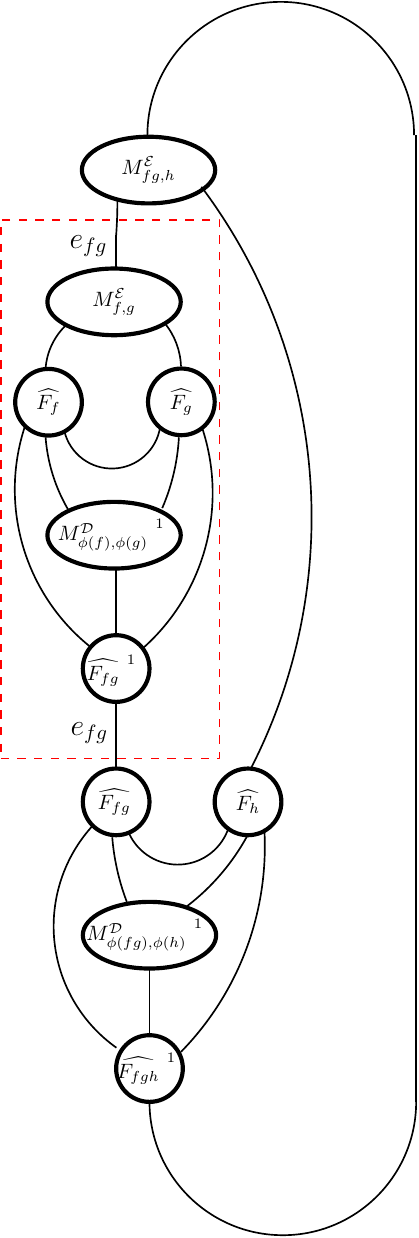}}\eq   \raisebox{-.5\height}{ \includegraphics[scale = .6]{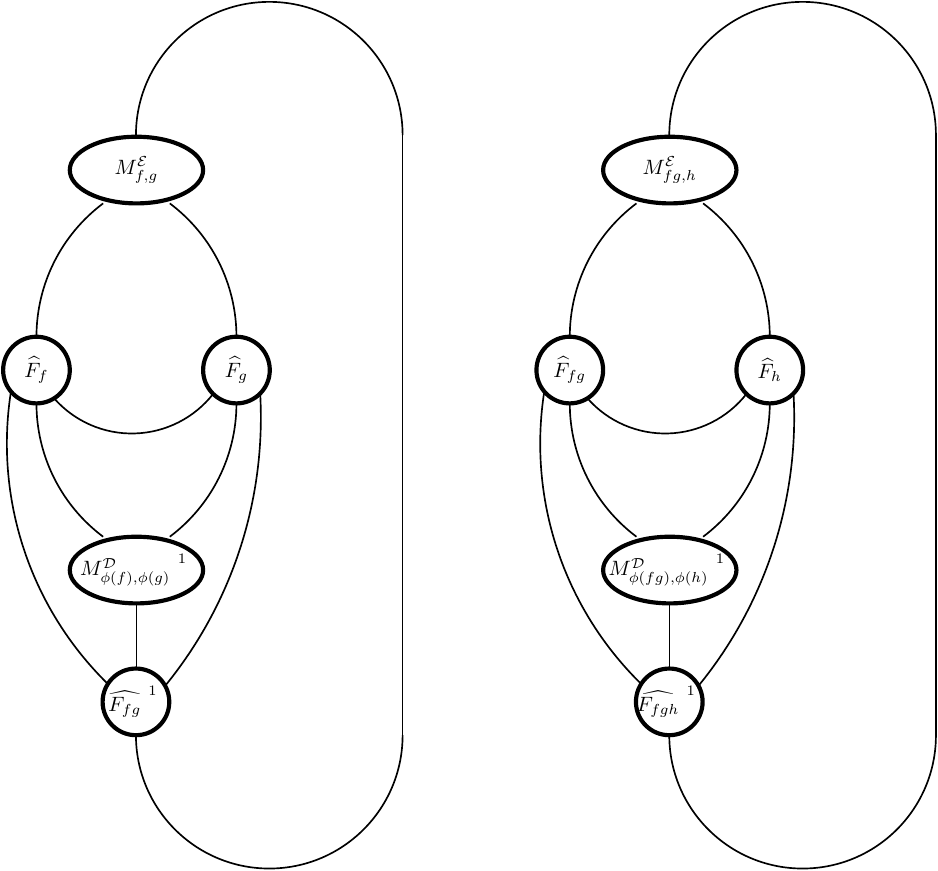}}   \\
&  = T(\cF_e,\phi,\widehat{F})_{f,g}T(\cF_e,\phi,\widehat{F})_{fg,h}.
\end{align*}
For the first equality we apply Lemma~\ref{lem:lact}. For the second equality we recouple the $e_{gh}^{op}$ and $e_{gh}$ strings, then apply Lemma~\ref{lem:anue}. For the third equality we use the fact that $M^\cD$ and $M^\cE$ are systems of products. For the fourth equality we replace the $\cC_{\cF_e} \boxtimes d_{\phi(fg)} \boxtimes _{\cF_e}\cC$ string with $\widehat{F}_{fg}^{-1} \circ \widehat{F}_{fg}$. For the fifth equality we apply Lemma~\ref{lem:anue}, and then recouple the resulting $e_{gh}$ cups and caps.
\end{proof}

To simplify our proofs for the remainder of this section we introduce the following notation.
\begin{defn}\label{def:equivact}
Let $\rho \in C^1(G, \Inv(\cZ(\cC)))$, and $\widehat{F}$ a collection of bimodule equivalences as in Equation~\eqref{eq:Fcol}. We define
\[   \widehat{F}^\rho :=   \{ \rho_g \boxtimes \widehat{F}_g \} = \left \{   \raisebox{-.5\height}{ \includegraphics[scale = .6]{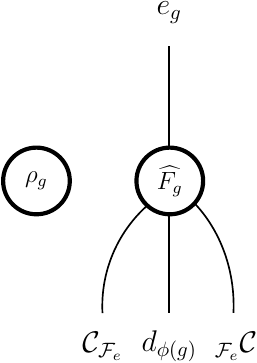}}  \right \}.\]
\end{defn}
As was the case for analogous action on systems of products in Section~\ref{sec:gradedex}, this action of $C^1(G, \Inv(\cZ(\cC)))$ does not act of systems of equivalences for $(\cF_e, \phi)$. That is for $\rho \in C^1(G, \Inv(\cZ(\cC)))$, we do not have that
\[   T(\cF_e,\phi, \widehat{F}^\rho)_{g,h} = T(\cF_e,\phi, \widehat{F})_{g,h}. \]

Instead we have the following Lemma.
\begin{lem}\label{lem:2cob}
Let $\rho \in C^1(G, \Inv(\cZ(\cC)))$, then we have the relation
\begin{align*}
 T(\cF_e,\phi, \widehat{F}^\rho)_{g,h} = \partial^2(\rho)_{g,h} T(\cF_e,\phi, \widehat{F})_{g,h}.
 \end{align*}
\end{lem}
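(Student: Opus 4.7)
The plan is to follow the exact same strategy as in Lemma~\ref{lem:3cob}, simply one categorical level down: substitute the definition of $\widehat{F}^\rho$ into the expression for $T(F_e,\phi,\widehat{F}^\rho)_{g,h}$, then slide the $\rho$-labels through the picture using Lemma~\ref{lem:lact} until they separate from the rest of the diagram and collect into $\partial^2(\rho)_{g,h}$.

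More concretely, I would begin by drawing the string diagram for $T(F_e,\phi,\widehat{F}^\rho)_{g,h}$ using the definition from Equation~\eqref{eq:Fact}, which replaces each $\widehat{F_g}$ by $\rho_g \boxtimes \widehat{F_g}$ (and correspondingly $\widehat{F_{gh}}^{-1}$ contributes a $\rho_{gh}^{-1}$). This puts three scalar decorations into the picture: a $\rho_g$ sitting next to $\widehat{F_g}$, a $\rho_h$ sitting next to $\widehat{F_h}$, and a $\rho_{gh}^{-1}$ sitting next to $\widehat{F_{gh}}^{-1}$. After this substitution the only difference from $T(F_e,\phi,\widehat{F})_{g,h}$ is the presence of these three labels.

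Next I would use Lemma~\ref{lem:lact} repeatedly to move each $\rho$ past the adjacent $\widehat{F}$'s and $M^D_{g,h}$, so that all three end up in a single region where the picture makes contact with the trivial bimodule $B$. In doing so, one of the labels, namely the one that has to be pushed past the $d_g$-strand, picks up an action of $d_g$ (exactly as in the proof of Lemma~\ref{lem:3cob}, where ${\rho_{g,h}^{-1}}^{c_f}$ appears). The three labels then multiply as scalars, leaving the rest of the diagram visibly equal to $T(F_e,\phi,\widehat{F})_{g,h}$, multiplied by the product $\rho_g \cdot \rho_h^{d_g} \cdot \rho_{gh}^{-1}$, which is precisely $\partial^2(\rho)_{g,h}$ with the action of $G$ on $\Inv(Z(B))$ induced by the homomorphism $d \colon G \to \BrPic(B)$.

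The only real obstacle is bookkeeping: one must be careful that $\rho_g$ and $\rho_h$ are moved past the correct bimodules, so that exactly the right one picks up the $d_g$-action and the coboundary comes out with the right sign convention. Because the calculation is a routine string-diagram manipulation using only Lemma~\ref{lem:lact} together with circle-popping and string-recoupling, I would present it as a short sequence of graphical identities, highlighting at each stage the local region being rewritten, in the same spirit as the proof of Lemma~\ref{lem:3cob}.
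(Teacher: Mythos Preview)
Your proposal is correct and matches the paper's approach exactly: the paper's proof is precisely a one-line graphical computation that substitutes $\widehat{F}^\rho$ into the picture for $T(F_e,\phi,\widehat{F}^\rho)_{g,h}$ and then applies Lemma~\ref{lem:lact} to slide the three $\rho$-labels out, leaving $\partial^2(\rho)_{g,h}\,T(F_e,\phi,\widehat{F})_{g,h}$. Your identification of which label picks up the $d_g$-action and your remark that this parallels Lemma~\ref{lem:3cob} one level down are both on the mark.
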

\begin{proof}
We compute using Lemma~\ref{lem:lact}
 \[ T(\cF_e,\phi, \widehat{F}^\rho)_{g,h} =\raisebox{-.5\height}{ \includegraphics[scale = .6]{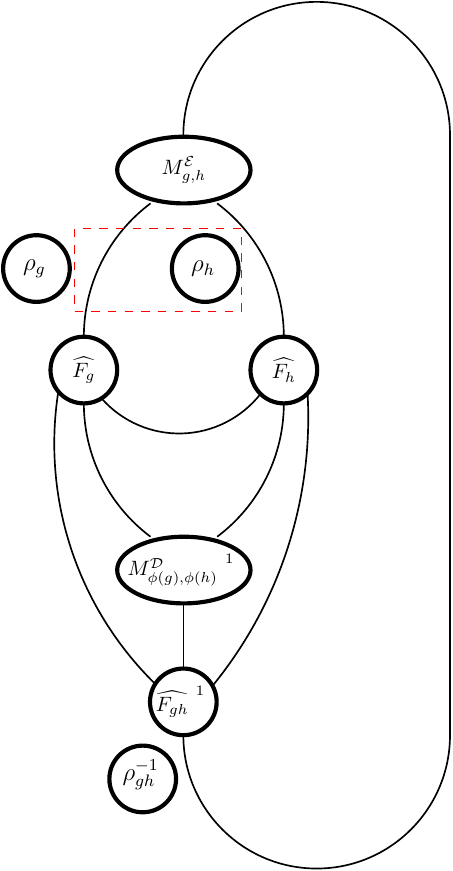}} = \raisebox{-.5\height}{ \includegraphics[scale = .6]{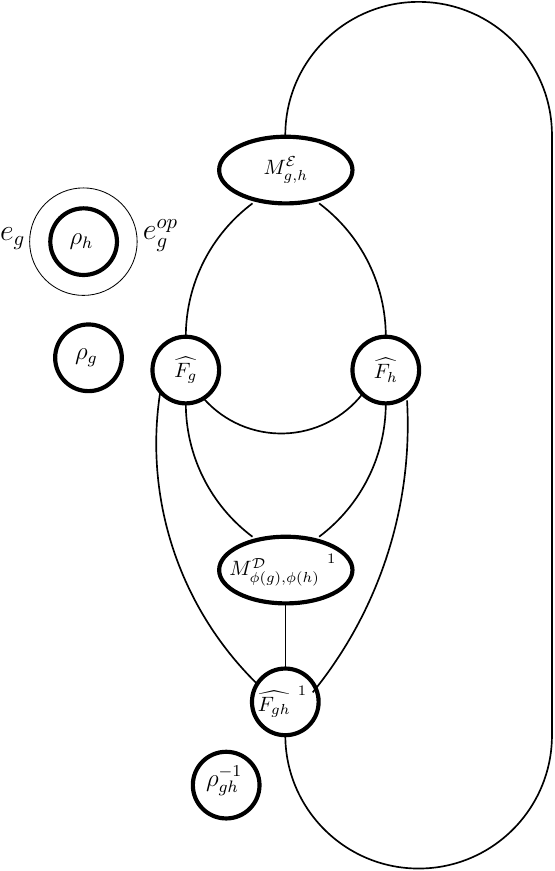}}=  \partial^2(\rho)_{g,h}T(\cF_e,\phi,\widehat{F})_{g,h}.      \]
 \end{proof}

Suppose we have two collections of bimodule equivalences $\widehat{F}$ and $\widehat{F}'$ as in Equation~\eqref{eq:Fcol}. Then each $\widehat{F}_g'$ differs from $\widehat{F}_g$ by an element $L_g \in \Inv(\cZ(\cC))$. Thus $L$ is an element of $C^1(G, \Inv(\cZ(\cC)))$, such that $\widehat{F}' = \widehat{F}^L$ (in the notation of Definition~\ref{def:equivact}).

Applying Lemma~\ref{lem:2cob} shows $T(\cF_e,\phi, \widehat{F'})$ and $ T(\cF_e,\phi, \widehat{F})$ differ by a coboundary in $Z^2(G, \Inv(\cZ(\cC)))$. Thus the cohomology class of $ T(\cF_e,\phi, \widehat{F})$ is independent from the choice of $\widehat{F}$, and only depends on the choice of $\cF_e$ and $\phi$

\begin{defn}\label{def:o2obs}
We write $o_2(\cF_e,\phi)$ for the cohomology class of $ T(\cF_e,\phi, \widehat{F})$ in $H^2(G, \Inv(\cZ(\cC)))$
\end{defn}
From the element $o_2(\cF_e,\phi) \in H^2(G, \Inv(\cZ(\cC)))$ we can deduce a necessary and sufficient condition for there to exist a system of equivalences for $(\cF_e, \phi)$

\begin{lem}\label{lem:non-emptytorsor1}
For fixed $\cF_e$ and $\phi$, there exists a system of equivalences for $(\cF_e,\phi)$ if and only if the two homomorphisms $G \to \BrPic(\cC)$ given by
\[ \operatorname{Inn} (  _{\cF_e}\cC ) \circ d \circ \phi \quad \text{ and } \quad  e\]
are equal, and the cohomology class $o_2(\cF_e,\phi)$ is trivial in $H^2(G,\Inv(\cZ(\cC)))$.
\end{lem}
\begin{proof}
Suppose $\widehat{F}$ is a system of equivalences for $(\cF_e,\phi)$. By definition $T(\cF_e,\phi, \widehat{F})$ is trivial in $Z^2(G,\Inv(\cZ(\cC)))$, and therefore $o_2(\cF_e,\phi)$ is the trivial element of $H^2(G,\Inv(\cZ(\cC)))$. As each $\widehat{F}_g$ is a bimodule equivalence $\cC_{\cF_e}\boxtimes d_{\phi(g)} \boxtimes _{\cF_e}\cC \to e_g$ we immediately have that $\operatorname{Inn} (  _{\cF_e}\cC ) \circ d \circ \phi = e$.

Conversely, suppose $\operatorname{Inn} (  _{\cF_e}\cC ) \circ d \circ \phi = e$, then we can make an arbitrary choice of bimodule equivalences 
\[   \widehat{F}_g: \cC_{\cF_e}\boxtimes d_{\phi(g)} \boxtimes _{\cF_e}\cC \to e_g.\] 
As $o_2(\cF_e,\phi)$ is trivial in $H^2(G,\Inv(\cZ(\cC)))$ we have that the 2-cocycle $T(\cF_e,\phi,\widehat{F})$ is a coboundary, and so $T(\cF_e,\phi,\widehat{F})= \partial^2(L)$ for some $L \in C^1(G, \Inv(\cZ(\cC)))$. Using the action of $C^1(G, \Inv(\cZ(\cC)))$ on $M$ from Definition~\ref{def:equivact}, we construct a new collection of bimodule equivalences $M^{L^{-1}}$. We compute with Lemma~\ref{lem:2cob} that
\[ T(\cF_e,\phi,\widehat{F}^{L^{-1}}) \cong \partial^2( L^{-1})\partial^2(L) = \Id.    \] 
Thus $\widehat{F}^{L^{-1}}$ is a system of equivalences for $(\cF_e,\phi)$.
\end{proof}

Given that we have found exactly when there exists a system of equivalences for $(\cF_e, \phi)$, our next goal is to classify all systems of equivalences for $(\cF_e, \phi)$.

\begin{lem}\label{lem:tor1}
For fixed $\cF_e$ and $\phi$, such that 
\[  \operatorname{Inn} (  _{\cF_e}\cC ) \circ d \circ \phi = e, \]
and such that $o_2(\cF_e,\phi)$ is trivial in $H^2(G,\Inv(\cZ(\cC)))$, we have that systems of equivalences for $(\cF_e,\phi)$ form a torsor over $Z^1(G, \Inv(\cZ(\cC)))$.
\end{lem}
\begin{proof}\hspace{1em}
The action of $Z^2(G,\Inv(\cZ(\cC)))$ on system of equivalences for $(\cF_e,\phi)$ is as defined in Definition~\eqref{def:equivact}.

\noindent We begin by showing the action of $Z^2(G, \Inv(\cZ(\cC)))$ on systems of equivalences for $(\cF_e,\phi)$ is well defined.
\begin{trivlist}\leftskip=3em 
\item Let $\rho \in Z^1(G,\Inv(\cZ(\cC))$, and $\widehat{F}$ a system of equivalences for $(\cF_e,\phi)$. Then $\rho$ acts on $\widehat{F}$ as in Definition~\eqref{def:equivact}. As $\rho$ is a 1-cocycle, we have $\partial^2(\rho)$ is trivial, and thus Lemma~\ref{lem:2cob} implies that 
\[   T(\cF_e,\phi,\widehat{F}^\rho) = T(\cF_e,\phi,\widehat{F}).\]
 As $\widehat{F}$ is a system of equivalences for $(\cF_e,\phi)$ we know that $T(\cF_e,\phi,\widehat{F})$ vanishes, and thus $T(\cF_e,\phi,\widehat{F}^\rho)$ also vanishes. Hence $\widehat{F}^\rho$ is a system of equivalences for $(\cF_e,\phi)$.
\end{trivlist}

\noindent Next we show that the action of $Z^1(G, \Inv(\cZ(\cC)))$ on systems of equivalences for $(\cF_e,\phi)$ is transitive.
\begin{trivlist}\leftskip=3em 
\item Let $\widehat{F}$ and $\widehat{F}'$ be two systems of equivalences for $(\cF_e,\phi)$. As explained earlier, $\widehat{F_g}' = L_g \widehat{F_g}$ for some $L \in C^1(G,\Inv(\cZ(\cC)))$, therefore $\widehat{F}'= \widehat{F}^L$. Using Lemma~\ref{lem:lact} we get that 
\[  T(\cF_e,\phi, \widehat{F}') = \partial^2(L)T(\cF_e,\phi,\widehat{F}).\] 
As $\widehat{F}$ and $\widehat{F}'$ are both systems of equivalences for $(\cF_e,\phi)$, we have by definition that $T(\cF_e,\phi, \widehat{F}')$ and $T(\cF_e,\phi,\widehat{F})$ vanish. Thus $\partial^2(L)$ must be trivial, and so $L \in Z^1(G,\Inv(\cZ(\cC)))$.
\end{trivlist}

\noindent  Finally we show that the action of $Z^1(G, \Inv(\cZ(\cC)))$ on systems of equivalences for $(\cF_e,\phi)$ is free.
\begin{trivlist}\leftskip=3em 
\item  Let $\rho \in Z^1(G, \Inv(\cZ(\cC)))$ and $\widehat{F}$ a system of equivalences for $(\cF_e,\phi)$. Suppose $\widehat{F}^\rho$ is equivalent to $\widehat{F}$ as systems of equivalences for $(\cF_e,\phi)$. Then there exist bimodule natural isomorphisms $\mu_g:\widehat{F}_g^\rho = \rho_g \boxtimes \widehat{F}_g \to \widehat{F}_g$. Thus each $\rho_g$ must be the trivial element of $\Inv(\cZ(\cC))$, and hence $\rho$ is the trivial element of $Z^1(G, \Inv(\cZ(\cC)))$.
\end{trivlist}
Putting everything together we have that systems of equivalences for $(\cF_e,\phi)$ form a $Z^1(G,\Inv(\cZ(\cC)))$ torsor.
\end{proof}

We find it somewhat surprising that systems of equivalences for $(\cF_e,\phi)$ form a $Z^1(G,\Inv(\cZ(\cC)))$ torsor, rather than a $H^1(G,\Inv(\cZ(\cC)))$ torsor. A sceptical reader may argue that we have chosen too weak a notion of isomorphism between $\cC$-based equivalences, and that a suitable stronger notion of isomorphism between $\cC$-based equivalence would result in systems of equivalences forming a $H^1(G,\Inv(\cZ(\cC)))$ torsor. To convince the reader that we have chosen the best notion of $\cC$-based isomorphism we present the following example. In this example we construct two quasi-monoidal $\cC$-based auto-equivalences of a $G$-graded fusion category such that the corresponding systems of equivalences differ by a coboundary. However the corresponding quasi-monoidal auto-equivalences are non-isomorphic, even as abelian functors, let alone as quasi-monoidal functors.
\begin{example}
Consider the pointed category $\Vec(D_{2\cdot3})$ with trivial associator. Recall $D_{2\cdot3}$ is the semi-direct product of $\mathbb{Z}/3\mathbb{Z}$ by $\mathbb{Z}/2\mathbb{Z}$. Thus the category $\Vec(D_{2\cdot3})$ is a $\mathbb{Z}/2\mathbb{Z}$-extension of $\Vec(\mathbb{Z}/3\mathbb{Z})$, with the non-trivially graded piece the bimodule $\Vec(\mathbb{Z}/3\mathbb{Z})$ twisted by the non-trivial monoidal auto-equivalence of $\Vec(\mathbb{Z}/3\mathbb{Z})$. We denote this non-trivial bimodule $\cM$

Consider the identity auto-equivalence of $\Vec(D_{2\cdot3})$. By forgetting the tensorator of this auto-equivalence we get a quasi-monoidal auto-equivalence of $\Vec(D_{2\cdot 3})$, and hence a system of equivalences $\widehat{F}$ for $( \Id_{ \Vec(\mathbb{Z}/3\mathbb{Z})} , \id_{\mathbb{Z}/2\mathbb{Z}})$. Explicitly we have that 
\begin{align*}
\widehat{F}_0 &= \Id_{\Vec(\mathbb{Z}/3\mathbb{Z})} \\
\widehat{F}_1 &= \Id_{\cM}. 
\end{align*}

As the fusion category $\Vec(\mathbb{Z}/3\mathbb{Z})$ admits a modular braiding \cite{premodular-3}, we have that 
\[     \Inv(\cZ (  \Vec(\mathbb{Z}/3\mathbb{Z}) )) =    \Vec(\mathbb{Z}/3\mathbb{Z}) \times   \Vec(\mathbb{Z}/3\mathbb{Z}).\]
A direct computation shows that the bimodule $\cM$ acts on $\Inv(\cZ (  \Vec(\mathbb{Z}/3\mathbb{Z}) )) $ by applying the non-trivial automorphism of $\mathbb{Z}/3\mathbb{Z}$ to both factors. With this information we can compute that the function $\rho: G \to \Inv(\cZ(\cC))$ defined by 
\[   \rho_0 := e\times e \quad \quad \rho_1 := e\times r\]
is a 1-cocycle, and further, that $\rho$ is a coboundary.

We now consider the system of equivalences $\widehat{F}^\rho$. We see that $\widehat{F}^\rho_1$ acts non-trivially on the objects of $\cM$ (as $e\times r$ forgets to a non-trivial invertible object of $\Vec(\mathbb{Z}/3\mathbb{Z})$ ). Therefore the quasi-monoidal auto-equivalence associated to the system of equivalences $\widehat{F}^\rho$ acts non-trivially on the objects of $\Vec(D_{2\cdot3})$. Thus the quasi-monoidal functors associated to $\widehat{F}$ and $\widehat{F}^\rho$ can not be isomorphic even as plain abelian functors, let alone as quasi-monoidal functors.
\end{example}

\section{Existence and classification of tensorators for graded quasi-monoidal equivalences}\label{sec:ten}
In the previous section we classify systems of equivalences for a fixed $\cF_e$ and $\phi$. In this section we now fix $\widehat{F}$ a system of equivalences for $(\cF_e, \phi)$, and aim to classify collections of bimodule natural isomorphisms
\begin{equation}\label{eq:T}     \widehat{T} \eq    \left \{\raisebox{-.5\height}{ \includegraphics[scale = .5]{taugh}} \right \}\end{equation}
such that the bimodule natural automorphism $v(\cF_e,\phi,\widehat{F},\widehat{T})_{f,g,h}$ defined in Figure~\ref{eq:v3} is the identity for all $f,g,h \in G$. It follows from the discussion of Section~\ref{sec:data} that the quadruple $(\cF_e,\phi,\widehat{F},\widehat{T})$ then gives rise to a $\cC$-based equivalence.

\begin{defn}\label{def:systen}
We call a collection of bimodule natural isomorphisms $\widehat{T}$ (as in \ref{eq:T}) such that $v(\cF_e,\phi,\widehat{F},\widehat{T})$ vanishes a \textit{system of tensorators for $(\cF_e,\phi,\widehat{F})$}.
\end{defn}
 We consider system of tensorators up to $\cC$-based natural isomorphism of the corresponding $\cC$-based equivalences. Explicitly this means that we consider two systems of tensorators $\widehat{T}$ and $\widehat{T}'$ the same if and only if there exists a collection $\mu_g \in \mathbb{C}^\times$, such that $\mu_{g,h}\widehat{T}_{g,h} = \mu_g\mu_h \widehat{T}'_{g,h}$.

To begin, let us fix $\cF_e, \phi$, and $\widehat{F}$ a system of tensorators for $(\cF_e, \phi)$. As $\widehat{F}$ is a system of equivalences for $(\cF_e, \phi)$ we can choose a collection of bimodule natural isomorphisms $\widehat{T}$. However there is no guarantee that $v(\cF_e,\phi,\widehat{F},\widehat{T})$ vanishes for our choice. To help us determine when $v(\cF_e,\phi,\widehat{F},\widehat{T})$ vanishes we observe that $v(\cF_e,\phi,\widehat{F},\widehat{T})_{f,g,h}$ is a natural automorphism of the bimodule $H_{f,g,h}$ (as defined in Section~\ref{sec:data}), hence we can identify it with a non-zero complex number. Thus $v(\cF_e,\phi,\widehat{F},\widehat{T})$ is a function $G\times G \times G \to \mathbb{C}^\times$. We can also identify $v(\cF_e,\phi,\widehat{F},\widehat{T})_{f,g,h}$ with a natural automorphism of $\Id_{\cC}$ in the following way:
\begin{align*} V(\cF_e,\phi,\widehat{F},\widehat{T})_{f,g,h} :=&     \operatorname{r}_{\operatorname{R}_{e_{fgh} }}  \cdot [ \id_{\operatorname{R}_{e_{fgh}}} \circ (  \operatorname{r}_{H_{f,g,h}} \boxtimes \id_{\Id_{e_{fgh}}}) \circ \id_{\operatorname{R}^{-1}_{e_{fgh}}}    ]  \\
 &\cdot [ \id_{\operatorname{R}_{e_{fgh}}} \circ ( \id_{H^{-1}_{f,g,h}} \boxtimes \id_{\Id_{e_{fgh}^{op}}}) \circ (  v(\cF_e,\phi,\widehat{F},\widehat{T})_{f,g,h} \boxtimes \id_{\Id_{e_{fgh}^{op}}}) \circ \id_{\operatorname{R}^{-1}_{e_{fgh}}}    ] \\ 
&\cdot [ \id_{\operatorname{R}_{e_{fgh}}} \circ (  \operatorname{R}^{-1}_{H_{f,g,h}} \boxtimes \id_{\Id_{e_{fgh}^{-1}}}) \circ \id_{\operatorname{R}^{-1}_{e_{fgh}}}    ]\cdot  \operatorname{r}^{-1}_{\operatorname{R}_{e_{fgh} }}.
\end{align*}

Graphically we can think of $V(\cF_e,\phi,\widehat{F},\widehat{T})_{f,g,h}$ as a compactification of the foam describing $v(\cF_e,\phi,\widehat{F},\widehat{T})_{f,g,h}$ in Figure~\ref{eq:v3}. Morally we can regard $V(\cF_e,\phi,\widehat{F},\widehat{T})_{f,g,h}$ as a sort of 3-dimensional trace of the 3-morphism $v(\cF_e,\phi,\widehat{F},\widehat{T})$.

As with $v(\cF_e,\phi,\widehat{F},\widehat{T})$, we can regard $V(\cF_e,\phi,\widehat{F},\widehat{T})$ as a function $G\times G \times G \to \mathbb{C}^\times$. In fact when regarded as functions $G\times G \times G \to \mathbb{C}^\times$ we have that $v(\cF_e,\phi,\widehat{F},\widehat{T})$ and $V(\cF_e,\phi,\widehat{F},\widehat{T})$ are equal. We have two representatives for the same function as each representative is easier to work with in certain situations. For example, in the following Lemma we show that $V(\cF_e,\phi,\widehat{F},\widehat{T})$ is a 3-cocycle, which then implies that the corresponding function $G \times G \times G \to \mathbb{C}^\times$ is also a 3-cycle. We use the 3-dimensional graphical calculus for $\BBBrPic(\cC)$ for the proof of this Lemma.

\begin{lem}
We have 
\[ V(\cF_e,\phi,\widehat{F},\widehat{T})_{g,h,k}V(\cF_e,\phi,\widehat{F},\widehat{T})_{f,gh,k}V(\cF_e,\phi,\widehat{F},\widehat{T})_{f,g,h} = V(\cF_e,\phi,\widehat{F},\widehat{T})_{fg,h,k}V(\cF_e,\phi,\widehat{F},\widehat{T})_{f,g,hk}.\] 
\end{lem}
\begin{proof}
Showing that
\[ V(\cF_e,\phi,\widehat{F},\widehat{T})_{g,h,k}V(\cF_e,\phi,\widehat{F},\widehat{T})_{f,gh,k}V(\cF_e,\phi,\widehat{F},\widehat{T})_{f,g,h} = V(\cF_e,\phi,\widehat{F},\widehat{T})_{fg,h,k}V(\cF_e,\phi,\widehat{F},\widehat{T})_{f,g,hk}.\]  
follows somewhat similar to the calculation showing $T(\cF_e,\phi,\widehat{F})$ is a 2-cocycle, except now the proof is in 3 dimensions, so we need to recouple sheets instead of strings. This calculation is in fact somewhat easier, as each $V(\cF_e,\phi,\widehat{F},\widehat{T})_{-,-,-}$ can be contracted to a scaler and moved anywhere in the diagram through linearity.

 The calculation follows by merging both 
\[ V(\cF_e,\phi,\widehat{F},\widehat{T})_{g,h,k}V(\cF_e,\phi,\widehat{F},\widehat{T})_{f,gh,k}V(\cF_e,\phi,\widehat{F},\widehat{T})_{f,g,h}\] 
and 
\[ V(\cF_e,\phi,\widehat{F},\widehat{T})_{fg,h,k}V(\cF_e,\phi,\widehat{F},\widehat{T})_{f,g,hk}\] 
into single components through sheet recoupling. We then cancel 3-morphisms with their inverses, then apply the fact $A^\cD$ and $A^\cE$ are systems of associators to show these two terms are equal. We omit the exact details of this calculation as they are fairly straightforward, but difficult to draw.
\end{proof}

Suppose we have choose two different collections of bimodule natural isomorphisms $\widehat{T}$ and $\widehat{T}'$. Then each $\widehat{T}'_{g,h}$ differs from $\widehat{T}_{g,h}$ by a non-zero complex scaler $\lambda_{g,h}$, that is 
\[     \widehat{T}'_{g,h} = \lambda_{g,h} \widehat{T}_{g,h}.\]
Using linearity it is easy to see that 
\[v(\cF_e,\phi,\widehat{F},\widehat{T}')_{f,g,h} = \lambda_{g,h}\lambda_{f,gh}\lambda_{f,g}^{-1}\lambda^{-1}_{fg,h}v(\cF_e,\phi,\widehat{F},\widehat{T})_{f,g,h} =\partial^3(\lambda) v(\cF_e,\phi,\widehat{F},\widehat{T})_{f,g,h} . \]
Thus $v(\cF_e,\phi,\widehat{F},\widehat{T}')$ differs from $v(\cF_e,\phi,\widehat{F},\widehat{T})$ by a coboundary in $Z^3(G,\mathbb{C}^\times)$. This implies that the cohomology class of $v(\cF_e,\phi,\widehat{F},\widehat{T})$ is independent from the choice of $\widehat{T}$, and only depends on $\cF_e$, $\phi$, and $\widehat{F}$.

\begin{defn}\label{def:o3obs}
We write $o_3(\cF_e, \phi, \widehat{F})$ for the cohomology class of $v(\cF_e,\phi,\widehat{F},\widehat{T})$ in $H^3(G, \mathbb{C}^\times)$.
\end{defn}

From the element $o_3(\cF_e, \phi, \widehat{F}) \in H^3(G, \mathbb{C}^\times)$ we can deduce a necessary and sufficient condition for there to exist a system of tensorators for $(\cF_e, \phi, \widehat{F})$.

\begin{lem}\label{lem:non-emptytorsor2}
Suppose $\widehat{F}$ is a system of equivalences for $(\cF_e,\phi)$. Then there exists a system of tensorators for $(\cF_e,\phi,\widehat{F})$ if and only if $o_3(\cF_e,\phi,\widehat{F})$ is the trivial element of $H^3(G,\mathbb{C}^\times)$.
\end{lem}
\begin{proof}
Suppose $\widehat{T}$ is a system of tensorators for $(\cF_e,\phi,\widehat{F})$. Then by definition $v(\cF_e,\phi,\widehat{F},\widehat{T})$ is the trivial element of $Z^3(G,\mathbb{C}^\times)$, and thus $o_3(\cF_e,\phi,\widehat{F})$ is trivial in $H^3(G,\mathbb{C}^\times)$.

Conversely suppose $o_3(\cF_e,\phi,\widehat{F})$ is trivial in $H^3(G,\mathbb{C}^\times)$. Let $\widehat{T}$ be an arbitrary collection of bimodule natural isomorphisms as in \ref{eq:T}, which we can choose as $\widehat{F}$ is a system of equivalences for $(\cF_e,\phi)$. As $o_3(\cF_e,\phi,\widehat{F})$ is trivial, we have that $v(\cF_e,\phi,\widehat{F},\widehat{T}) = \partial^3(\lambda)$ for some $\lambda \in C^2(G,\mathbb{C}^\times)$. We define a new collection of bimodule natural isomorphisms 
\[  \widehat{T}_{g,h}^{\lambda^{-1}}:=\lambda_{g,h}^{-1}T_{g,h}. \]
Using linearity is is straightforward to see that $v(\cF_e,\phi,\widehat{F},\widehat{T}^{\lambda^{-1}})$ vanishes, and thus $\widehat{T}^{\lambda^{-1}}$ is a system of tensorators for $(\cF_e,\phi,\widehat{F})$.
\end{proof}

Given that we have found exactly when there exists a system of tensorators for $(\cF_e,\phi,\widehat{F})$, we now aim to classify all systems of tensorators for $(\cF_e,\phi,\widehat{F})$.

\begin{lem}\label{lem:tor2}
Suppose $o_3(\cF_e,\phi,\widehat{F})$ is trivial, then systems of tensorators for $(\cF_e,\phi,\widehat{F})$ form a torsor over $H^2(G,\mathbb{C}^\times)$.
\end{lem}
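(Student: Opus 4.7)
The plan is to mirror the structure of Lemma~\ref{lem:tor3}, which handled the analogous torsor statement for systems of associators. I would verify four statements in sequence: (a) $Z^2(G,\mathbb{C}^\times)$ acts on systems of tensorators; (b) the action is transitive; (c) the action descends to $H^2(G,\mathbb{C}^\times)$ because coboundaries produce equivalent systems; (d) the descended action is free.

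First, given a system of tensorators $\widehat{\tau}$ for $(F_e,\phi,\widehat{F})$ and a 2-cocycle $\psi \in Z^2(G,\mathbb{C}^\times)$, define
\[\widehat{\tau}^\psi_{g,h} := \psi_{g,h}\widehat{\tau}_{g,h}.\]
By the same linearity argument used for $V(c,M,A)$ earlier (and already invoked in the definition of $o_3(F_e,\phi,\widehat{F})$), we get
\[V(F_e,\phi,\widehat{F},\widehat{\tau}^\psi)_{f,g,h} = d^3(\psi)_{f,g,h}\, V(F_e,\phi,\widehat{F},\widehat{\tau})_{f,g,h},\]
and since $\psi$ is a 2-cocycle and $V(F_e,\phi,\widehat{F},\widehat{\tau})$ is trivial, we deduce that $\widehat{\tau}^\psi$ is again a system of tensorators. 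For transitivity, any two systems differ by a function $\lambda: G\times G \to \mathbb{C}^\times$ satisfying $\widehat{\tau}'_{g,h} = \lambda_{g,h}\widehat{\tau}_{g,h}$, and the same identity forces $d^3(\lambda) = \mathrm{id}$, i.e.\ $\lambda \in Z^2(G,\mathbb{C}^\times)$.

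For the descent to $H^2(G,\mathbb{C}^\times)$, suppose $\lambda = \partial^2(\mu)$ for some $\mu \in C^1(G,\mathbb{C}^\times)$; I need to produce a collection of scalars making $\widehat{\tau}$ and $\widehat{\tau}^\lambda$ equivalent in the sense defined just before the lemma. Unwinding the definition, one needs $\mu_{gh}\widehat{\tau}_{g,h} = \mu_g \mu_h \widehat{\tau}^\lambda_{g,h}$, which after substituting $\widehat{\tau}^\lambda_{g,h} = \lambda_{g,h}\widehat{\tau}_{g,h}$ becomes the cochain identity $\mu_{gh} = \mu_g\mu_h \lambda_{g,h}$, i.e.\ exactly the coboundary relation $\lambda = \partial^2(\mu)$ in the multiplicative convention. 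Conversely, if there is a collection $\{\mu_g \in \mathbb{C}^\times\}$ giving an equivalence between $\widehat{\tau}$ and $\widehat{\tau}^\lambda$, the same identity recovers $\lambda$ as a coboundary, so the action is free.

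The main obstacle is purely bookkeeping in step (c): making sure that the notion of equivalence of systems of tensorators defined at the start of Section~\ref{sec:ten} matches the cohomological coboundary relation with the correct signs/conventions. Once this is set up, the non-emptiness of the set of systems follows from the previous lemma (since $o_3(F_e,\phi,\widehat{F})$ is assumed trivial), and combining (a)--(d) establishes the torsor structure. As in Lemma~\ref{lem:tor3}, no further graphical calculation is required beyond the linearity arguments already used when defining $V(F_e,\phi,\widehat{F},\widehat{\tau})$ and establishing its cocycle property.
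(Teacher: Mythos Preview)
Your proposal is correct and follows essentially the same approach as the paper's proof: both verify the four torsor properties (action, transitivity, descent to $H^2$, freeness) using the linearity identity $V(F_e,\phi,\widehat{F},\widehat{\tau}^\lambda) = d^3(\lambda)V(F_e,\phi,\widehat{F},\widehat{\tau})$ and the equivalence relation on systems of tensorators defined at the start of Section~\ref{sec:ten}. The paper's argument is essentially a verbatim parallel of Lemma~\ref{lem:tor3}, just as you anticipated.
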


\begin{proof}
Let $\widehat{T}$ be a system of tensorators for $(\cF_e,\phi,\widehat{F})$. We define an action of $\lambda \in Z^2(G,\mathbb{C}^\times)$ on $\widehat{T}$ by 
\[  \widehat{T}^\lambda_{g,h} :=   \lambda_{g,h}\widehat{T}_{g,h}.\]

\noindent We begin by showing this action of $Z^2(G,\mathbb{C}^\times)$ on systems of tensorators for $(\cF_e,\phi,\widehat{F})$ is well defined.
\begin{trivlist}\leftskip=3em 
\item Let $\lambda \in Z^2(G,\mathbb{C}^\times)$, and $\widehat{T}$ a system of tensorators for $(\cF_e,\phi,\widehat{F})$. Using linearity we compute
\[   v(\cF_e,\phi,\widehat{F},\widehat{T}^\lambda) = \partial^3(\lambda) v(\cF_e,\phi,\widehat{F},\widehat{T}) = v(\cF_e,\phi,\widehat{F},\widehat{T})  . \]
As $\widehat{T}$ is a system of tensorators for $(\cF_e,\phi,\widehat{F})$, we have that $v(\cF_e,\phi,\widehat{F},\widehat{T})$ vanishes. Hence $v(\cF_e,\phi,\widehat{F},\widehat{T}^\lambda)$ also vanishes, and thus $\widehat{T}^\lambda$ is a system of tensorators for $(\cF_e,\phi,\widehat{F})$.
\end{trivlist}

\noindent Next we show that the action of $Z^2(G,\mathbb{C}^\times)$ on systems of tensorators for $(\cF_e,\phi,\widehat{F})$ is transitive.
\begin{trivlist}\leftskip=3em 
\item Let $\widehat{T}$ and $\widehat{T}'$ be two systems of tensorators for $(\cF_e,\phi,\widehat{F})$. As explained in the earlier discussion, each $\widehat{T}'_{g,h}$ differs from $\widehat{T}_{g,h}$ by a non-zero complex number, hence $\widehat{T}' = \widehat{T}^\lambda$ for some $\lambda \in C^2(G, \mathbb{C}^\times)$. Using linearity we compute
\[ v(\cF_e,\phi,\widehat{F},\widehat{T}') = \partial^3(\lambda)v(\cF_e,\phi,\widehat{F},\widehat{T}). \]
As $\widehat{T}$ and $\widehat{T}'$ are systems of tensorators for $(\cF_e,\phi,\widehat{F})$, both $v(\cF_e,\phi,\widehat{F},\widehat{T})$ and $v(\cF_e,\phi,\widehat{F},\widehat{T}')$ are trivial. Thus $\partial^3(\lambda)$ is also trivial, and so $\lambda \in  Z^2(G,\mathbb{C}^\times)$. 
\end{trivlist}

\noindent Now we show the action of $Z^2(G,\mathbb{C}^\times)$ on systems of tensorators for $(\cF_e,\phi,\widehat{F})$ descends to a well-defined action of $H^2(G, \mathbb{C}^\times)$.
\begin{trivlist}\leftskip=3em 
\item Let $\widehat{T}$ be a system of tensorators for $(\cF_e,\phi,\widehat{F})$ and suppose $\lambda$ is a coboundary in $Z^2(G,\mathbb{C}^\times)$. Then $\lambda = \partial^2(\mu)$ for some $\mu \in C^1(G, \mathbb{C}^\times)$. We compute 
\[  \widehat{T}^\lambda_{g,h} = \lambda_{g,h}\widehat{T}_{g,h} = \mu_{h}\mu_{gh}^{-1}\mu_{h}\widehat{T}_{g,h}. \]
Thus the systems of tensorators $\widehat{T}$ and $\widehat{T}^\lambda$ are equivalent.
\end{trivlist}

\noindent Finally we show the action of $H^2(G, \mathbb{C}^\times)$ on systems of tensorators for $(\cF_e,\phi,\widehat{F})$ is free.
\begin{trivlist}\leftskip=3em 
\item Let $\lambda \in   Z^2(G, \mathbb{C}^\times)$ and $T$ a system of tensorators for $(\cF_e,\phi,\widehat{F})$. Suppose the systems of tensorators $\widehat{T}$ and $\widehat{T}^\lambda$ are equivalent. Then there exists $\mu \in C^1(G,\mathbb{C}^\times)$ such that 
\[  \mu_{gh} \widehat{T}_{g,h} = \mu_g\mu_h \widehat{T}^\lambda_{g,h} =  \mu_g\mu_h\lambda_{g,h} \widehat{T}_{g,h}. \]
Therefore $\lambda_{g,h} =  \mu_g\mu_{gh}^{-1}\mu_h$, which gives that $\lambda$ is a coboundary. 
\end{trivlist}

Putting everything together we have that systems of tensorators for $(\cF_e,\phi,\widehat{F})$ form a torsor over $H^2(G,\mathbb{C}^\times)$.
\end{proof}

\section{Classification of $\cC$-based equivalences}\label{sec:together}
In this section we complete our classification of $\cC$-based equivalences between two $G$-graded extensions of a fusion category $\cC$.

\begin{thm}\label{thm:mainclass}
Let $\cD$ and $\cE$ be two $G$-graded extensions of a fusion category $\cC$, corresponding to triples $(d, M^\cD, A^\cD)$ and $(e, M^\cE, A^\cE)$ respectively. We have that $\cC$-based equivalences between $\cD$ and $\cE$ are classified (up to $\cC$-based natural isomorphism) by quadruples 
\[    (\cF_e, \phi, \widehat{F}, \widehat{T}),\]
where
\begin{itemize}
\item $\cF_e$ is a monoidal auto-equivalence of $\cC$,
\item $\phi$ is an automorphism of the group $G$,
\item $\widehat{F}$ is a system of equivalences for $(\cF_e, \phi)$, as defined in Definition~\ref{def:syseq},
\item $\widehat{T}$ is a system of tensorators for $(\cF_e, \phi,\widehat{F})$, as defined in Definition~\ref{def:systen}, such that the obstruction $v(\cF_e, \phi,\widehat{F}, \widehat{T})_{f,g,h}$, as defined in Equation~\eqref{eq:v3}, is trivial for all $f,g,h \in G$.
\end{itemize}

Further, we have that system of equivalences for $(\cF_e, \phi)$ form a torsor over $Z^1(G,\Inv(\cZ(\cC)))$ if and only if the two homomorphisms $G \to \BrPic(\cC)$ given by
\[ \operatorname{Inn} (  _{\cF_e}\cC ) \circ d \circ \phi \quad \text{ and } \quad  e\]    
are equal, and the cohomology class 
\[ o_2(\cF_e, \phi) \in H^2(G, \Inv(\cZ(\cC)))\]
defined in Definition~\ref{def:o2obs} is trivial. 

Furthermore, we have that systems of tensorators for $(\cF_e, \phi,\widehat{F})$ form a torsor over $H^2(G, \mathbb{C}^\times)$ if and only if the cohomology class 
\[ o_3(\cF_e, \phi, \widehat{F}) \in H^3(G, \mathbb{C}^\times)\]
defined in Definition~\ref{def:o3obs} is trivial. 
\end{thm}
Recall $\operatorname{Inn} (  _{\cF_e}\cC )$ is the inner automorphism of the group $\BrPic(\cC)$ induced from the invertible bimodule $_{\cF_e}\cC$.
\begin{proof}
It follows from Section~\ref{sec:data} that $\cC$-based equivalences $\cD\to \cE$ are classified (up to $\cC$-based natural isomorphism) by quadruples $(\cF_e, \phi, \widehat{F}, \widehat{T})$, where $\cF_e$ is a monoidal auto-equivalences of $\cC$, $\phi$ is a group automorphism of $G$, $\widehat{F}$ is a system of equivalences for $(\cF_e, \phi)$, and $\widehat{T}$ is a system of tensorators for $(\cF_e, \phi,\widehat{F})$. Thus to classify $\cC$-based equivalences $\cD\to \cE$ we need to classify such quadruples.

Let $\cF_e$ be any monoidal auto-equivalence of $\cC$, and $\phi$ an automorphism of the group $G$. Lemma~\ref{lem:non-emptytorsor1} says we can pick a system of equivalences for $(\cF_e, \phi)$ if and only if the two homomorphisms $G \to \BrPic(\cC)$ given by
\[ \operatorname{Inn} (  _{\cF_e}\cC ) \circ d \circ \phi \quad \text{ and } \quad  e\]    
are equal, and if the cohomology class $ o_2(\cF_e, \phi) \in H^2(G, \Inv(\cZ(\cC)))$ is trivial. Once we can pick a system of equivalences for $(\cF_e, \phi)$, Lemma~\ref{lem:tor1} then gives that systems of equivalences for $(\cF_e, \phi)$ form a torsor over $Z^1(G, \Inv(\cZ(\cC)))$.

Let $\widehat{F}$ be a system of equivalences for $(\cF_e, \phi)$. Lemma~\ref{lem:non-emptytorsor2} says we can pick a system of tensorators for  $(\cF_e, \phi, \widehat{F})$ if and only if the cohomology class $ o_3(\cF_e, \phi,\widehat{F}) \in H^3(G, \mathbb{C}^\times)$ is trivial. Once we can pick a system of tensorators for $(\cF_e, \phi, \widehat{F})$, Lemma~\ref{lem:tor2} then gives that systems of tensorators for $(\cF_e, \phi,\widehat{F})$ form a torsor over $H^2(G, \mathbb{C}^\times)$.
\end{proof}

\section{Applications}\label{sec:examples}
In this final section we explore the applications of our main classification Theorem~\ref{thm:mainclass}. Unfortunately our classification result is hard to use in practice due to the obstruction $o_3$, which requires knowledge of the entire Brauer-Picard 3-category to compute. The obstruction $o_3$ lives in the cohomology group $H^3(G, \mathbb{C}^\times)$, which is only trivial when $G$ is the trivial group, thus we can't use cohomology tricks to show the obstruction $o_3$ vanishes, and have to explicitly compute it. For this section we focus on applications of Theorem~\ref{thm:mainclass} that avoid having to compute the obstruction $o_3$. We find many interesting and practical applications.

\subsection*{Gauge auto-equivalences of a $G$-graded category}

For a given fusion category $\cC$ it is an important question to compute the group of monoidal auto-equivalences of $\cC$. A difficult obstruction to computing this group is the existence of monoidal equivalences that fix all objects up to isomorphism. Such monoidal auto-equivalences are known as \textit{gauge auto-equivalences of $\cC$}. For all known fusion categories it is expected that the group of gauge auto-equivalences of that category is isomorphic to $H^2(G, \mathbb{C}^\times)$, where $G$ is the universal grading group of that category. However proving this fact for any example is difficult in practice, and new tools are desperately needed to determine the gauge auto-equivalences of a category.

Suppose $\cD$ is a $G$-graded extension of $\cC$, and $\cF$ is a gauge auto-equivalence of $\cD$. As $\cF$ fixes all objects up to isomorphism, it follows that $\cF$ is a $\cC$-based auto-equivalence of $\cD$, and hence corresponds to a quadruple as in Theorem~\ref{thm:mainclass}. This fact allows us to prove the following Theorem regarding the gauge auto-equivalences of $\cD$.

\begin{thm}
Let $\cD = \bigoplus \cD_g$ be a $G$-graded extension of $\cC$. Suppose that the following three conditions are satisfied
\begin{itemize}
\item  $\cC$ has no non-trivial gauge auto-equivalences,
\item  If $z \in \Inv(\cC)$ is a fixed point for some $\cD_g$, then $z = \mathbf{1}$,
\item  $\ad(\cC) \simeq \cC$.
\end{itemize}
Then the gauge auto-equivalence group of $\cD$ is isomorphic to $ H^2(G, \mathbb{C}^\times)$.
\end{thm}

\begin{proof}
Let $\cF$ be a gauge auto-equivalence of $\cD$, then under Theorem~\ref{thm:mainclass} $\cF$ corresponds to a quadruple $(\cF_e, \phi, \widehat{F}, \widehat{T})$. As $\cF$ is a gauge auto-equivalence of $\cD$, it must restrict to a gauge auto-equivalence on $\cC$, thus $\cF_e$ must be trivial. Further we can also deduce that $\phi$ must be the identity automorphism of $G$. 

We now wish to determine the bimodule equivalences $\widehat{F}_g : \cD_g \to \cD_g$. From Lemma~\ref{lem:anue} we have $\widehat{F}_g = (z_g,\gamma^{z_g})    \boxtimes \Id_{\cD_g}$, with each $(z_g,\gamma^{z_g}) \in \Inv(\cZ(\cC))$. As $\cF$ is a gauge auto-equivalence, we must have that each $\widehat{F}_g$ fixes the objects of $\cD_g$, and thus each $(z_g,\gamma^{z_g})$ forgets to an object of $\Inv(\cC)$ that fixes the objects of $\cD_g$, which implies that $z_g = \mathbf{1}$. As $\ad(\cC) \simeq \cC$ there is a unique half-braiding on the unit \cite{MR3354332}, thus $(z_g,\gamma^{z_g})$ is the trivial object of $ \Inv(\cZ(\cC))$, and so $\widehat{F}_g = \Id_{\cD_g}$.

The existence of the identity auto-equivalence for $\cD$ gives a system of tensorators for $(\Id_\cC, \id_G, \{   \Id_{\cD_g} \})$, and thus $\widehat{T}$ must be a twist of this system of tensorators by an element of $H^2(G, \mathbb{C}^\times)$. Conversely, any element of $H^2(G, \mathbb{C}^\times)$ gives a new system of tensorators. Hence we have shown that gauge auto-equivalences of $\cD$ correspond to elements of $H^2(G, \mathbb{C}^\times)$. 
\end{proof}

An important class of fusion categories are the \textit{categories of level $k$ integrable representations of an affine Lie algebra $\widehat{\mathfrak{g}}$}, which we denote as $\cC(\mathfrak{g}, k)$. We direct the reader to \cite{1810.09055} for more details. Using the above Theorem we are able to show that if $\mathfrak{g}$ is a Lie algebra of type $A_n, B_n, C_n, D_{2n+1}$, or $G_2$, then the category $\cC(\mathfrak{g}, k)$ has no non-trivial gauge auto-equivalences, and if $\mathfrak{g}$ is a Lie algebra of type $D_{2n}$, then the gauge auto-equivalence group of $\cC(\mathfrak{g}, k)$ is isomorphic to $\Z{2}$. We will prove this result in a future publication, and explore the consequences of this result to the program to classify quantum subgroups of the simple Lie groups.

\subsection*{Classification of pointed cyclic categories up to monoidal equivalence}
Let $G$ be a finite group. It is a well-known result that categories with $\Vec(G)$ fusion rules are classified by elements of $H^3(G, \mathbb{C}^\times)$. However this classification is up to equivalence of $G$-graded extensions of $\Vec$, and not up to monoidal equivalence. We can apply Theorem~\ref{thm:mainclass} to see that categories with $\Vec(G)$ fusion rules are classified up to monoidal equivalence by the set
\[    H^3(G, \mathbb{C}^\times) / \Aut(G).\]
While this classification is interesting in theory, it isn't so great in practice as the action $\Aut(G)$ on $H^3(G, \mathbb{C}^\times)$ requires explicit 3-cocyles to compute.

Let us focus on the case $G = \Z{N}$ where we can say something concrete. It is well-known that
\[ \Z{N}\cong H^3(G, \mathbb{C}^\times),\]
with an explicit isomorphism given by
\[    n \mapsto \omega_n,\]
with 
\[ \omega_n(a,b,c) = e^{ \frac{2i \pi n}{N^2}a(b + c  -  [ ( b + c    ) \text{ mod } N ]     }.\]

Let $p \in (\Z{N})^\times = \Aut(\Z{N})$. A direct computation shows that 
\[   \omega_n(p\cdot a, p\cdot b, p\cdot c) = \omega_{p^2\cdot n}(a,b,c).\]

Thus 
\[  H^3(G, \mathbb{C}^\times) / \Aut(\Z{N})\cong (\Z{N} ) /  \{ p^2 : p \in  (\Z{N})^\times  \}   \]
and so categories with $\Vec(\Z{N})$ fusion rules are classified up to monoidal equivalence by the set $ (\Z{N} ) /  \{ p^2 : p \in  (\Z{N})^\times  \}$. Computing the size of this classifying set for a fixed $N$ is a straightforward task in modular arithmetic. From this classifying set we conjecture the following formula for $\Omega(N)$, the number of monoidal equivalence classes of fusion categories with $\Vec(\Z{N})$ fusion rules:
\[    \Omega(N= p_1^{k_1}p_2^{k_2}\cdots p_M^{k_M}) = \gamma_{p_1}(k_1)\gamma_{p_2}(k_2)\cdots \gamma_{p_M}(k_M),\]
where
\[   \gamma_p(k) = \begin{cases}
2    & \text{ if } p = 2 \text{ and } k = 1\\
3     & \text{ if } p \neq 2 \text{ and } k = 1\\
4(k-1)  &   \text{ if } p = 2 \text{ and } k \neq 1\\
2k + 1    &  \text{ if } p \neq 2 \text{ and } k  \neq 1.
\end{cases}\]
A computer verifies this conjectured formula for $N \leq 5000$.

\subsection*{An action of $\Aut_\otimes(\cC) \times \Aut(G)$ on $\Ext$}

Recall that $\Ext$ is the set of all $G$-graded extensions of $\cC$, up to equivalence of extensions. As demonstrated by Example~\ref{ex:Z9}, equivalence of extensions is a stronger notion than monoidal equivalence. Thus $\Ext$ over-counts the number of monoidally inequivalent $G$-graded extensions of $\cC$, which is an issue when trying to use the classification of $G$-graded extensions for other classification results. While Theorem~\ref{thm:mainclass} gives a way in theory to determine when two $G$-graded extensions of $\cC$ are $\cC$-based equivalent (and hence monoidally equivalent), as we have already said, it is near impossible to use this Theorem in practice. The aim of this subsection is to find an easier way to determine when two $G$-graded extensions of $\cC$ are monoidally equivalent, rather than directly applying Theorem~\ref{thm:mainclass}.

We begin by defining a group action on the set $\Ext$. Let $\cH \times \psi \in \Aut_\otimes(\cC)\times \Aut(G)$, and $\cD \in \Ext$. By the classification of $G$ graded extensions of $\cC$ we have that $\cD$ corresponds to a triple $(d, M , A)$ as in Section~\ref{sec:gradedex}. We define
\begin{align*}
d^{\cH\times \psi } :=& \operatorname{Inn}(_{\cH}\cC) \circ d \circ \psi, \\
M_{g,h}^{\cH\times \psi } :=&     (\Id_{\cC_\cH}\boxtimes M_{\psi(g),\psi(h)} \boxtimes \Id_{_{\cH}\cC})\circ (\Id_{\cC_\cH}\boxtimes \Id_{c_{\psi(g)}} \boxtimes \operatorname{R}_{_\cH\cC}\boxtimes \Id_{c_{\psi(h)}} \boxtimes \Id_{_{\cH}\cC}), \\ 
A^{\cH\times \psi }_{f,g,h} :=& [\id_{\Id_{\cC_\cH}} \boxtimes A_{\psi(f),\psi(g),\psi(h)} \boxtimes \id_{\Id_{_{\cH}\cC}}][\id_{\Id_{\cC_\cH}} \boxtimes \id_{\Id_{c_{\psi(f)}}} \boxtimes \id_{\operatorname{R}_{_\cH\cC}} \boxtimes \id_{\Id_{c_{}\psi(g)}}  \boxtimes \id_{\operatorname{R}_{_\cH\cC}} \boxtimes \id_{\Id_{c_h}} \boxtimes \id_{\Id_{\cC_\cH}}],
\end{align*}
where $\operatorname{R}_{_\cH\cC}$ is the specific bimodule equivalence $  _\cH\cC \boxtimes \cC_\cH \to \cC$ from part (2) of Theorem~\ref{lem:untwisting}, and $\operatorname{Inn}(_{\cH}\cC)$ is the inner automorphism of $\BrPic(\cC)$ defined in Remark~\ref{rem:inn}.

In the graphical calculus we have:
\[ M_{g,h}^{\cH\times \psi } = \raisebox{-.5\height}{ \includegraphics[scale = .6]{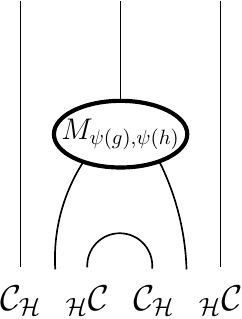}} \]
and
\[ A^{\cH\times \psi }_{f,g,h}= \raisebox{-.5\height}{ \includegraphics[scale = .6]{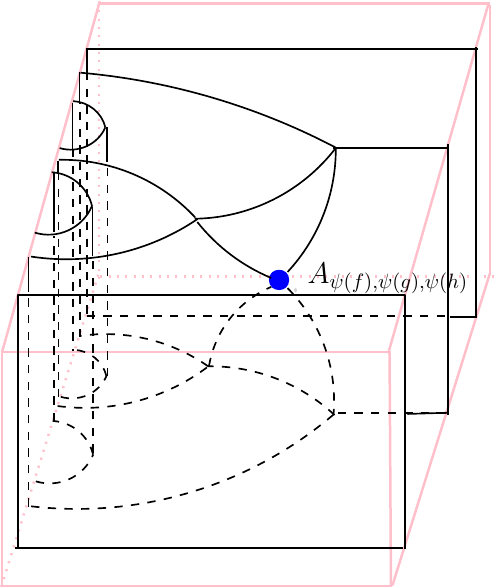}}.  \]

\begin{lem}\label{lem:Gact}
The map
\[    (\cH\times \psi) \cdot (d, M , A) \mapsto (d^{\cH\times \psi }, M^{\cH\times \psi } , A^{\cH\times \psi })\]
gives a well-defined action of $\Aut_\otimes(\cC) \times \Aut(G)$ on $\Ext$.
\end{lem}
\begin{proof}
It is straightforward to verify that
\[     (\cH_2\times \psi_2) \cdot[ (\cH_1\times \psi_1) \cdot (d, M , A)] =[ (     \cH_2\circ \cH_1 )\times(\psi_2\circ \psi_1) ]\cdot (d, M , A).\]

Hence all we need to show is that the triple $(d^{\cH\times \psi }, M^{\cH\times \psi } , A^{\cH\times \psi })$ gives a $G$-graded extension of $\cC$. This is equivalent to showing that the obstruction $v( d^{\cH\times \psi }, M^{\cH\times \psi } , A^{\cH\times \psi })_{f,g,h,k}$ from Figure~\ref{fig:v4} is trivial for all $f,g,h,k \in G$. Using the graphical calculus, and some simple isotopies we can reduce the vanishing of $v( d^{\cH\times \psi }, M^{\cH\times \psi } , A^{\cH\times \psi })_{f,g,h,k}$ to the vanishing of $v( d, M , A)_{f,g,h,k}$. As the triple $(d,M,A)$ corresponds to to a $G$-graded extension of $\cC$, we have $v( d, M , A)_{f,g,h,k}$ is trivial for all $f,g,h,k \in G$. Thus the triple $(d^{\cH\times \psi }, M^{\cH\times \psi } , A^{\cH\times \psi })$ gives a $G$-graded extension of $\cC$.
\end{proof}

We now present the main result of this paper, Theorem~\ref{thm:mainprac}, which shows that any two $G$-graded extensions that are in the same orbit of the action of $\Aut_\otimes(\cC) \times \Aut(G)$, are in fact monoidally equivalence, despite possibly being different as extensions of $\cC$.

\begin{proof}[Proof of Theorem~\ref{thm:mainprac}]
Fix a $G$-graded extension of $\cC$ corresponding to a triple $(d,M,A)$, and $\cH \times \psi \in \Aut_\otimes(\cC) \times \Aut(G)$. We will produce a $\cC$-based equivalence between two the $G$-graded extensions associated to the triples $ (d^{\cH\times \psi }, M^{\cH\times \psi } , A^{\cH\times \psi })$ and $(d,M,A)$. To produce such a $\cC$-based equivalence we will apply Theorem~\ref{thm:mainclass} and produce a quadruple $(\cF_e, \phi, \widehat{F}, \widehat{T})$ such that the obstruction $v(\cF_e, \phi, \widehat{F}, \widehat{T} )_{f,g,h}$ vanishes for all $f,g,h \in G$.

Our choice for such a quadruple is:
\begin{align*}
\cF_e :=& \quad \cH \\
\phi :=& \quad \psi \\
\widehat{F}_g :=& \quad \Id_{\cC_\cH}\boxtimes \Id_{d_{\psi(g)}} \boxtimes \Id_{_\cH\cC} \\
\widehat{T}_{g,h} :=& \quad (\id_{\Id_{\cC_\cH}} \boxtimes \id_{M_{\psi(g),\psi(h)}} \boxtimes \id_{\Id_{\cH\cC}} ) \circ (\id_{\Id_{\cC_\cH}} \boxtimes \id_{d_{\psi(g)}} \boxtimes \operatorname{r}_{R_{_\cH\cC}} \boxtimes \id_{d_{\psi(h)}}\boxtimes \id_{\Id_{_\cH\cC}})
\end{align*}
In the graphical calculus we have:
\[  \widehat{F}_g \eq \raisebox{-.5\height}{ \includegraphics[scale = .6]{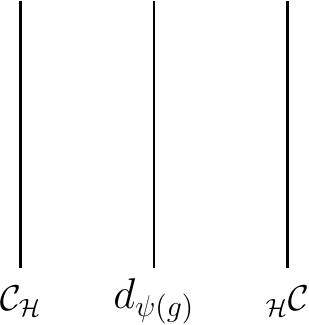}} \]
and 
\[ \widehat{T}_{g,h} \eq \raisebox{-.5\height}{ \includegraphics[scale = .6]{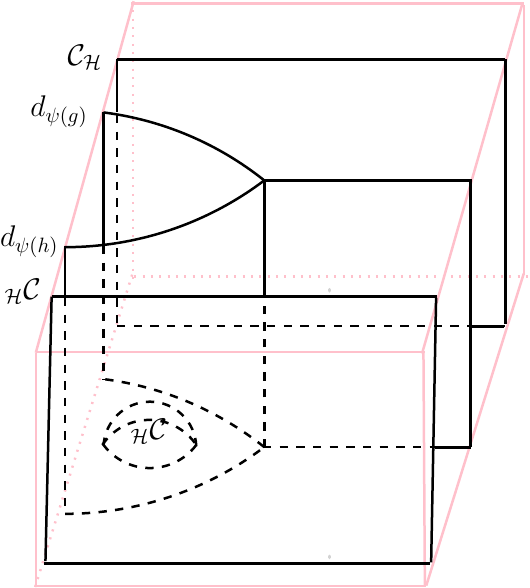}} .\]

We verify that $v(\cF_e, \phi, \widehat{F}, \widehat{T} )_{f,g,h}$ vanishes using the graphical calculus. This computation simply consists of the popping of two $\operatorname{r}_{R_{_\cH\cC}}\cdot \operatorname{r}^{-1}_{R_{_\cH\cC}}$ bubbles, and cancelling $A_{\psi(f),\psi(g),\psi(h)}$ with its inverse.
\end{proof}

This Theorem is still somewhat hard to apply in practice, as computing the action of $\Aut_\otimes(\cC) \times \Aut(G)$ on $M$ and $A$ involves some knowledge of the higher layers of $\BBBrPic(\cC)$. The aim of this paper was to develop easy to use methods to determine when two $G$-graded extensions of $\cC$ are monoidally equivalence. Towards this goal we introduce Corollaries~\ref{cor:cor1} and \ref{cor:cor2} of Theorem~\ref{thm:mainprac}. These two corollaries give us practical methods to determine a useful upper bound on the number of $G$-graded extensions of $\cC$, up to monoidal equivalence. To apply these two corollaries we only require knowledge of the groups $\BrPic(\cC)$ and $\Inv(\cZ(\cC))$ along with information about how $\BrPic(\cC)$ acts on $\Inv(\cZ(\cC))$. This information is computable for large classes of fusion categories, while the higher structure of $\BBBrPic(\cC)$ is only known for the simplest of cases.

\subsection*{Auto-equivalences of bosionic and fermionic modular categories}

Let $(\cC,f)$ be a modular category $\cC$ along with a distinguished boson or fermion $f$. The category $\cC$ has a $\Z{2} = \{+, -\}$ grading, with 
\begin{align*}
\cC_+ :=& \{ X \in \cC : \operatorname{br}_{X,f} = +\operatorname{br}_{f,X}^{-1} \} \\
\cC_- :=& \{ X \in \cC : \operatorname{br}_{X,f} = -\operatorname{br}_{f,X}^{-1}\}.
\end{align*}
The modularity of $\cC$ ensures that this grading is faithful. Using Theorem~\ref{thm:mainclass} we can construct a monoidal auto-equivalence of the category $\cC$ from the object $f$.

\begin{thm}\label{lem:fermod}
Let $(\cC,f)$ be a modular category $\cC$ along with a distinguished boson or fermion $f$. There is a monoidal auto-equivalence $\cF_f$ of $\cC$ defined by
\begin{align*}
\cF_f|_{\cC_+} :=& \Id_{\cC_+} \\
\cF_f|_{\cC_-} :=&  f \otimes \Id_{\cC_-},
\end{align*}
with tensorator
\begin{align*}
\tau_{X,Y} := \begin{cases}  \id_X\otimes \id_Y &\text{ if } X\in \cC_+ \text{ and } Y\in \cC_+, \\
			        \id_f \otimes \id_X\otimes \id_Y &\text{ if } X\in \cC_-\text{ and }Y \in \cC_+, \\
			         \operatorname{br}_{X,f}\otimes \id_Y &\text{ if } X\in \cC_+ \text{ and } Y \in \cC_-, \\
				 (\ev_f \otimes \id_X \otimes \id_Y   )(\id_f \otimes \operatorname{br}_{X,f} \otimes \id_{Y})&\text{ if } X\in \cC_-\text{ and } Y \in \cC_-.\end{cases}
\end{align*}
Furthermore this auto-equivalence is braided if and only if $f$ is a fermion.
\end{thm} 
\begin{proof}
To construct the monoidal auto-equivalence $\cF$ we regard $\cC$ as a $\Z{2}$-graded extension of $\cC_+$ and apply Theorem~\ref{thm:mainclass} and construct a quadruple $(\cF_e, \phi, \widehat{F}, \widehat{T})$ such that the obstruction $v(\cF_e, \phi, \widehat{F}, \widehat{T})_{f,g,h}$ from Figure~\ref{eq:v3} vanishes for all $f,g,h \in  \{+,-\}$.

We choose $\cF_e = \Id_{\cC_+}$ and $\phi = \id_{\Z{2}}$. To choose $\widehat{F}$ we note that the object $f$ lives in $\cC_+$, and lifts to $\cZ(\cC)$ via the braiding on $\cC_+$. Thus $(f, \operatorname{br}_{f,-})$ is an element of $\cZ(\cC)$. We then set
\begin{align*}
\widehat{F}_+ &= \Id_{\cC_+}, \\
\widehat{F}_- &= (f, \operatorname{br}_{f,-}) \boxtimes \Id_{\cC_-}.
\end{align*}
Our choice for the collection $\widehat{T}$ are the bimodule natural isomorphisms induced via Equation~\eqref{eq:funball} from the following $\cC_+$-balanced natural isomorphisms
 \begin{align*}  
 \tau_{X_+, Y_+ } &= \id_{X_+}\otimes \id_{Y_+}  &:   {\otimes}\circ (\widehat{F}_+\boxtimes \widehat{F}_+)\to \widehat{F}_+ \circ {\otimes}    ,\\
\tau_{X_-, Y_+ } &=        \id_f \otimes \id_{X_-}\otimes \id_{Y_+} &:   {\otimes}\circ (\widehat{F}_-\boxtimes \widehat{F}_+)\to \widehat{F}_- \circ {\otimes}  , \\
\tau_{X_+, Y_-} &=         \operatorname{br}_{X_+,f}\otimes \id_{Y_-} &:   {\otimes}\circ (\widehat{F}_+\boxtimes \widehat{F}_-)\to \widehat{F}_- \circ {\otimes}  , \\
\tau_{X_-, Y_- } &=	 (\ev_f \otimes \id_{X_-} \otimes \id_{Y_-}   )(\id_f \otimes \operatorname{br}_{X_-,f} \otimes \id_{Y_-}) &:   {\otimes}\circ (\widehat{F}_-\boxtimes \widehat{F}_-)\to \widehat{F}_+ \circ {\otimes} .
\end{align*}

We then directly verify that obstruction $v(\cF_e, \phi, \widehat{F}, \widehat{T})_{f,g,h}$ is trivial for all $f,g,h \in \{+,-\}$. Note that verifying this obstruction is trivial is exactly just verifying the hexagon equation for all $X,Y,Z \in \cC$. This computation reduces to showing the following braids are equal for $X_-,Y_-, Z_- \in \cC_-$:
\begin{center} \raisebox{-.5\height}{ \includegraphics[scale = .6]{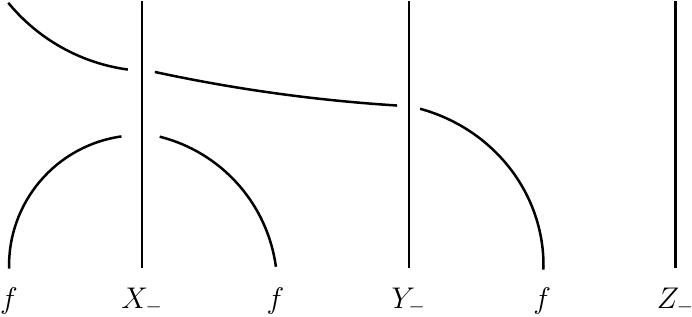}} \qquad and \qquad \raisebox{-.5\height}{ \includegraphics[scale = .6]{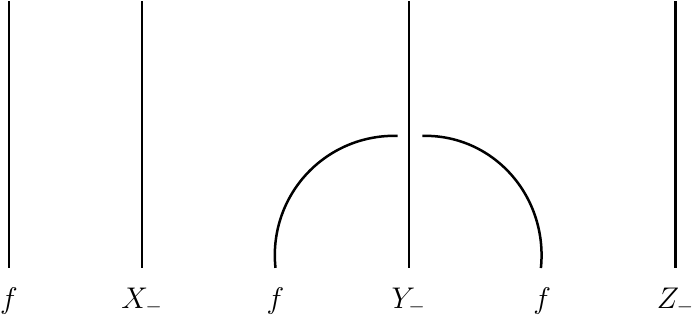}}.   \end{center}
As $f$ is a boson or fermion, we can recouple the two adjacent $f$ strands, giving equality of the two braids above.

All that is left to prove is that $\cF_f$ is braided if and only if $f$ is a fermion. We directly compute for $X_-,Y_- \in \cC_-$,

\resizebox{1 \textwidth}{!}{ $\tau_{Y_-,X_-}\circ \operatorname{br}_{\cF_f(X_-),\cF_f(Y_-)} =  \raisebox{-.5\height}{ \includegraphics[scale = .4]{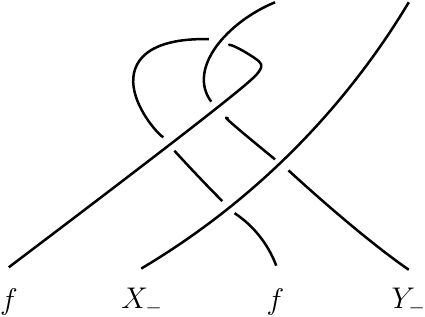}}       = -\raisebox{-.5\height}{ \includegraphics[scale = .4]{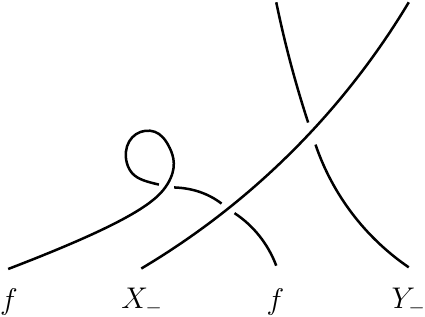}} = -t_f \raisebox{-.5\height}{ \includegraphics[scale = .4]{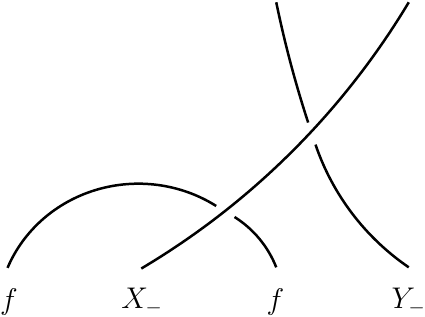}} = -t_f  \cF_f( \operatorname{br}_{X_-,Y_-})\circ \tau_{X_-,Y_-}.$}
Thus $\cF_z$ is braided if and only if $t_f= -1$, i.e. $f$ is a fermion.
\end{proof}

To illustrate the usefulness of the above Theorem, we work the following example.

\begin{example}
Let $k \in \mathbb{N}$ and consider $\cC(\mathfrak{sl}_2, k)$, the modular tensor category of level $k$ integrable representations of $\widehat{\mathfrak{sl}_2}$ (see \cite{1810.09055}). This modular category always has a non-trivial invertible object $(k)$, which is a boson if $k \equiv 0 \pmod 4$, and is a fermion if $k \equiv 2 \pmod 4$. Therefore an application of Theorem~\ref{lem:fermod} gives auto-equivalences of these modular categories. These auto-equivalences are non-trivial when $k>2$. A planar algebra argument shows there are no non-trivial gauge auto-equivalences of $\cC(\mathfrak{sl}_2, k)$, and a simple fusion rule argument shows the only non-trivial auto-equivocalness of $\cC(\mathfrak{sl}_2, k)$ are the ones constructed by Theorem~\ref{lem:fermod}. Thus we get the following result:
\begin{center}\begin{tabular}{c|c|c}
$k\pmod 4$ & $\operatorname{Aut}_\otimes (\cC(\mathfrak{sl}_2, k))$ & $\operatorname{Aut}_\otimes^\text{br} ( \cC(\mathfrak{sl}_2, k))$ \\
\hline
$0$ & $\Z{2}$ & $\{e\}$ \\
$1$ & $\{e\} $ & $\{e\}$ \\ 
$2$ & $\Z{2}$ & $\Z{2}$ \\
$3$ & $\{e\} $ & $\{e\}$ 
\end{tabular}\end{center}
This recovers the authors result from \cite{MR3808052}, with a far more elegant and simpler proof.
\end{example}

The process of constructing a fusion ring automorphism of a modular tensor category from an invertible object has long been known to physicists as \textit{simple current automorphisms} \cite{MR1887583}. Theorem~\ref{lem:fermod} considers the simplest non-trivial invertible object possible in a modular tensor category, an element of order 2, and shows that the associated simple current automorphism lifts to an auto-equivalence of the modular tensor category. Further, it gives conditions for when the associated auto-equivalence is braided or not, which is additional information that is not not be seen at the level of fusion ring automorphisms.

It seems reasonable to expect that all simple current automorphisms lift to auto-equivalences of the categories. Unfortunately we are unable to extend the proof of Lemma~\ref{lem:fermod} to an arbitrary symmetric object of order $M$. Such an extension of Lemma~\ref{lem:fermod} would have many exciting applications, such as to the classification of quantum subgroups of $\cC(\mathfrak{sl}_N, k)$.

\appendix

\section{Proof of Theorem~\ref{lem:untwisting}}\label{app:proof}
\begin{remark}
The following proof contains lengthy equations involving twisted bimodule functors and twisted natural transformations. To assist with readability we highlight in blue the relevant parts of each equation we change at each step of the equation. We make the following conventions regarding notation:
\begin{itemize}
\item We assume $\cC$ is a strict fusion category.

\item Throughout the proof $\cF$ will be a fixed monoidal auto-equivalence of $\cC$, with structure isomorphisms $\tau^\cF$.

\item If $\cW : \cM_1 \times \cM_2 \to \cN$ is a $\cC$ balanced functor, we will write $b^\cW$ for the balancing isomorphisms of $\cW$. If $\cW : \cM_1 \times  \cdots \cM_N \to \cN$ is a $\cC$ balanced functor, we will write ${b^\cW}^i$ for the balancing isomorphism of $\cW$ in the $i$-th position.

\item We will simply write $\cH$ for a twisted (or untwisted) bimodule equivalence, suppressing the structure isormorphisms. We will write $R^\cH$ and $L^\cH$ to refer to these surpressed structure isomorphisms for $\cH$.

\item Let $\cM_1$ and $\cM_2$ invertible $\cC$-bimodules. We write $B_{\cM_1,\cM_2}$ for the canonical $\cC$-balanced functor $\cM_1 \times \cM_2\to \cM_1\boxtimes \cM_2$.

\end{itemize}
\end{remark}
\begin{proof}
\textbf{Proof of statement~\ref{thm:part1}:}

Our goal is to construct a bijection between the set of $\cF$-twisted functors $\cM \to \cN$, and the set of untwisted bimodule functors $\cC_\cF \boxtimes \cM \boxtimes _\cF\cC \to \cN$. We begin by constructing a bijection between the set of $\cF$-twisted functors $\cM \to \cN$, and the set of $\cC$-balanced bimodule functors $\cC_\cF \times \cM \times _\cF\cC \to \cN$.

Let $\cH : \cM \to \cN$ a $\cF$-twisted bimodule functor. We construct a new functor 
\[ \overline{\cH} : \cC_\cF \times \cM \times _\cF\cC \to \cN\]
 by
\[
X \times m \times Y \mapsto X\triangleright \cH(m) \triangleleft Y.
\]
The functor $\overline{\cH}$ is $\cC$-balanced in the first and second position respectively via the maps:
\begin{align*}
{b^{\cH}_{X,Z,m,Y}}^{1} &:= \id_X \triangleright L^\cH_{Z,m} \triangleleft \id_Y: X\triangleright \cF(Z) \triangleright \cH(m) \triangleleft Y \to X\triangleright \cH(Z\triangleright m) \triangleleft Y\\
{b^\cH_{X,m,Z,Y}}^2 &:= \id_X \triangleright R^\cH_{m,Z} \triangleleft \id_Y: X \triangleright \cH(m)\triangleleft \cF(Z) \triangleleft Y \to X\triangleright \cH( m\triangleleft Z) \triangleleft Y.
\end{align*}
We directly verify that the above maps satisfies the condition to be balanced in the first position, but leave the other conditions to be checked by the reader.
\begin{align*}
{b^\cH_{X,Z_1\otimes Z_2,m,Y}}^{1}\overline{\cH}(\id_X \triangleright \tau^\cF_{Z_1,Z_2} \times \id_m \times \id_Y) &= \highlight{(\id_X \triangleright L^\cH_{Z_1\otimes Z_2,m} \triangleleft \id_Y)(\id_X \triangleright \tau^F_{Z_1,Z_2} \triangleright \id_m \triangleleft \id_Y)} \\
&= (\id_X \triangleright \id_{\cF(Z_1)} \triangleright L^\cH_{Z_2,m} \triangleleft \id_Y)(\id_X \triangleright L^\cH_{Z_1,Z_2\triangleright m} \triangleleft \id_Y)\\
&= {b^\cH_{X\triangleleft Z_1,Z_2,m,Y}}^{1}{b^\cH_{X,Z_1,Z_2 \triangleright m,Y}}^{1}.
\end{align*}
Thus the map $\cH \mapsto \overline{\cH}$ gives a map from the set of $\cF$-twisted functors $\cM \to \cN$ to the set of $\cC$-balanced bimodule functors $\cC_\cF \times \cM \times _\cF\cC \to \cN$.


We now construct an inverse to the map $\cH \to \overline{\cH}$. Let
\[  \cJ: \cC_\cF \times \cM \times _\cF\cC \to \cN,\]
 a $\cC$-balanced functor. We define a functor $\underline{\cJ}:\cM \to \cN$ by
\begin{equation*}
\underline{\cJ}(m) := \cJ(\mathbf{1}, m , \mathbf{1}).
\end{equation*}
The functor $\underline{\cJ}$ has the structure of a $\cF$-twisted bimodule functor via the following maps:
\begin{align*}
L^{\underline{\cJ}}_{Z,m}&:= {b^{\cJ}_{\mathbf{1},Z,m,\mathbf{1}}}^{1} L^{\cJ}_{\cF(Z),\mathbf{1},m,\mathbf{1}}\text{ and } \\
R^{\underline{\cJ}}_{m,Z}&:=  {({b^{\cJ}_{\mathbf{1},m,Z,\mathbf{1}}}^1)}^{-1} R^{\cJ}_{\mathbf{1},m,\mathbf{1},\cF(Z)}.
\end{align*}
We leave it to the reader to verify that $L^{\underline{\cJ}}$ and $R^{\underline{\cJ}}$ satisfy the conditions for $\underline{\cJ}$ to be a $\cF$-twisted bimodule functor.

We now show that the maps $\cH \mapsto \widehat{\cH}$ and $\cJ \mapsto \underline{\cJ}$ are inverse to each other (up to isomorphism), giving bijectivity. Let $\cH : \cM\to \cN$ a $\cF$-twisted bimodule equivalence, then
\[ \underline{(\overline{\cH})}(m) = \overline{\cH}(\mathbf{1},m,\mathbf{1}) = \mathbf{1}\triangleright m \triangleleft \mathbf{1} = m. \]
Thus $\underline{(\overline{\cH})}$ is strictly equal to $\cH$ as a $\cF$-twisted equivalence.

Let $\cJ: \cC_\cF \times \cM \times _\cF\cC \to \cN$ a $\cC$-balanced bimodule functor, then 
\[ R^\cJ_{X\times m \times \mathbf{1}, Y}[L^\cJ_{X,\mathbf{1}\times m \times \mathbf{1}} \triangleleft \id_Y] :  \overline{(\underline{\cJ})}(X,m,Y) = X\triangleright \underline{\cJ}(m)\triangleleft Y = X\triangleright \cJ(\mathbf{1} \times m \times \mathbf{1}) \triangleleft Y \to \cJ(X,m,Y).\]
gives a natural isomorphism $\overline{(\underline{\cJ})} \to \cJ$. We directly check that this natural isomorphism is balanced (only in the first position).
\begin{align*}
&b^\cJ_{X,Z,m,Y}R^\cJ_{X\otimes \cF(Z) \times m \times \mathbf{1}, Y}\highlight{[L^\cJ_{X\otimes \cF(Z), \mathbf{1}\times m \times \mathbf{1}}\triangleleft \id_Y]}\\
&= \highlight{b^\cJ_{X,Z,m,Y}R^\cJ_{X\otimes \cF(Z) \times m \times \mathbf{1}, Y}}[L^\cJ_{X,\cF(Z)\times m \times \mathbf{1}} \triangleleft \id_Y][\id_X \triangleright L^\cJ_{\cF(Z),\mathbf{1}\times m \times \mathbf{1}} \triangleleft \id_Y] \\
&=R^\cJ_{X\times Z\triangleright m\times \mathbf{1},Y}\highlight{[b^\cJ_{X,Z,m,\mathbf{1}} \triangleleft \id_Y][L^\cJ_{X,\cF(Z)\times m \times \mathbf{1}} \triangleleft \id_Y]}[\id_X \triangleright L^\cJ_{\cF(Z),\mathbf{1}\times m \times \mathbf{1}} \triangleleft \id_Y] \\
&=R^\cJ_{X\times Z\triangleright m\times \mathbf{1},Y}   [L^\cJ_{X,\mathbf{1}\times Z\triangleright m \times \mathbf{1}}\triangleleft \id_Y] \highlight{   [\id_X\triangleright b^\cJ_{\mathbf{1},Z,m,\mathbf{1}} \triangleleft \id_Y][\id_X \triangleright L^\cJ_{\cF(Z),\mathbf{1}\times m \times \mathbf{1}} \triangleleft \id_Y]}\\
&= R^\cJ_{X\times Z\triangleright m\times \mathbf{1},Y}   [L^\cJ_{X,\mathbf{1}\times Z\triangleright m \times \mathbf{1}}\triangleleft \id_Y] b^{\overline{(\underline{\cJ})}}_{X,Z,m,Y}.
\end{align*}
Thus $\overline{(\underline{\cJ})}$ is naturally isomorphic to $\cJ$ as a $\cC$-balanced functor. Hence the maps $\cH \mapsto \widehat{\cH}$ and $\cJ \mapsto \underline{\cJ}$ are inverse to each other.

We now apply the equivalence
\[ \operatorname{Fun}^{\text{bal}}( \cC_\cF \times \cM \times _\cF\cC \to \cN  ) \simeq \operatorname{Fun}( \cC_\cF \boxtimes \cM \boxtimes _\cF\cC \to \cN  )   \]
from \eqref{eq:funball} to get a bijection between the set of $\cF$-twisted functors $\cM \to \cN$, and the set of untwisted bimodule functors $\cC_\cF \boxtimes \cM \boxtimes _\cF\cC \to \cN$. We write $\cH \mapsto \widehat{\cH}$ for the map from the set of $\cF$-twisted functors $\cM \to \cN$ to the set of untwisted bimodule functors $\cC_\cF \boxtimes \cM \boxtimes _\cF\cC \to \cN$ induced by the map $\cH \mapsto \overline{\cH}$.

\textbf{Proof of statement~\ref{thm:part2}:}

For this statement we are required to make a specific choice of map $R_{_\cF\cC} : _\cF\cC\boxtimes \cC_\cF \to \cC$. We make the canonical choice, the functor induced by the $\cC$-balanced functor $r_{_\cF\cC}: X\times Y \mapsto \cF^{-1}(X\otimes Y)$, with identity balancing maps. 

Our goal is to construct a bimodule natural isomorphism:
\[ \Omega^\boxtimes_{\cH_1,\cH_2}: \widehat{\cH_1\boxtimes \cH_2}\circ (\Id_{\cC_\cF} \boxtimes \Id_{\cM_1} \boxtimes \operatorname{R}_{_\cF\cC} \boxtimes \Id_{\cM_2} \boxtimes \Id_{_\cF\cC}) \to ( \widehat{\cH_1} \boxtimes \widehat{\cH_2}),\]
which is equivalent under the equivalence
\[ \operatorname{Fun}^{\text{bal}}( \cC_\cF \times \cM_1 \times _\cF\cC \times  \cC_\cF \times \cM_2 \times _\cF\cC \to \cN_1\boxtimes \cN_2) \simeq \operatorname{Fun}( \cC_\cF \boxtimes \cM_1 \boxtimes _\cF\cC \boxtimes  \cC_\cF \boxtimes \cM_2 \boxtimes _\cF\cC \to \cN_1\boxtimes \cN_2)   \]
 to constructing a $\cC$-balanced natural isomorphism:
\[ \omega^\boxtimes_{\cH_1,\cH_2}: \overline{ B_{\cN_1,\cN_2}(\cH_1\times \cH_2) }\circ (\Id_{\cC_\cF}\times \Id_{\cM_1} \times \operatorname{r}_{_\cF\cC} \times \Id_{\cM_2} \times \Id_{_\cF\cC}) \to B_{\cN_1,\cN_2}(\overline{\cH_1}\times\overline{\cH_2}). \]

We define the components of such a natural isomorphism as follows (We write $B$ here instead of $B_{\cN_1,\cN_2}$ for readability.):
\begin{align*}
\omega^\boxtimes_{X\times m_1 \times Y \times V \times m_2 \times W} :=&   {b^{B}_{X\triangleright \cH_1(m_1),Y,V\triangleright \cH_2(m_2) \triangleleft W}}^{-1}  B(\id_X \triangleright \id_{\cH_1(m_1)} \times {L^{\cH_2}_{\cF^{-1}(Y\otimes V), m_2}}^{-1} \triangleleft \id_W) \\
& R^{B}_{X \triangleright \cH_1(m_1) \times \cH_2( \cF^{-1}(Y\otimes V) \triangleright m_2),W}[L^{B}_{X, \cH_1(m_1) \times \cH_2(\cF^{-1}(Y\otimes V) \triangleright m_2)} \triangleleft \id_W] \\
: X\triangleright B( \cH_1(m_1) \times \cH_2(&\cF^{-1}(Y\otimes V) \triangleright m_2))\triangleleft W \to B(X\triangleright \cH_1(m_1) \triangleleft Y \times V \triangleright \cH_2(m_2) \triangleleft W).
\end{align*}
We directly verify that $\omega_{\cH_1,\cH_2}^\boxtimes$ is $\cC$-balanced in the first position, and leave the other four to the reader. 
\begin{align*}
& b^{B(\overline{\cH_1}\times \overline{\cH_2})}_{X,Z,m_1,Y,V,m_2,W} \cdot \omega^\boxtimes_{X\triangleleft Z \times  m_1\times Y\times V \times m_2\times W} \\
&=B(\id_X\triangleright L^{\cH_1}_{Z,m_1} \triangleleft \id_Y \times \id_V \triangleright \id_{\cH_2(m_2)} \triangleleft \id_W) {b^{B}_{( X\otimes \cF(Z) )\triangleright \cH_1(m_1),Y,V\triangleright \cH_2(m_2) \triangleleft W}}^{-1} \\
&B(\id_X \triangleright\id_{\cF(Z)} \triangleright \id_{\cH_1(m_1)} \times {L^{\cH_2}_{\cF^{-1}(Y\otimes V), m_2}}^{-1} \triangleleft \id_W)R^B_{X\triangleright \cF(Z) \triangleright \cH_1(m_1) \times \cH_2( \cF^{-1}(Y\otimes V) \triangleright m_2),W}\\
&\highlight{[L^B_{X\otimes \cF(Z), \cH_1(m_1) \times \cH_2(\cF^{-1}(Y\otimes V) \triangleright m_2)} \triangleleft \id_W]} \\
&= \highlight{B(\id_X\triangleright L^{\cH_1}_{Z,m_1} \triangleleft \id_Y \times \id_V \triangleright \id_{\cH_2{m_2}} \triangleleft \id_W) {b^{B}_{X\triangleright \cF(Z)\triangleright \cH_1(m_1),Y,V\triangleright \cH_2(m_2) \triangleleft W}}^{-1}}\\
&B(\id_X \triangleright\id_{\cF(Z)} \triangleright \id_{\cH_1(m_1)} \times {L^{\cH_2}_{\cF^{-1}(Y\otimes V), m_2}}^{-1} \triangleleft \id_W)R^B_{X\triangleright \cF(Z) \triangleright \cH_1(m_1) \times \cH_2( \cF^{-1}(Y\otimes V) \triangleright m_2),W}\\
&[L^B_{X,\cF(Z)\triangleright \cH_1(m_1) \times \cH_2(\cF^{-1}(Y\otimes V) \triangleright m_2)} \triangleleft \id_W][\id_X \triangleright L^B_{X\otimes \cF(Z), \cH_1(m_1) \times \cH_2(\cF^{-1}(Y\otimes V) \triangleright m_2)} \triangleleft \id_W] \\
&= {b^{B}_{X \triangleright \cH_1(Z \triangleright m_1),Y,V \triangleright \cH_2(m_2) \triangleleft W}}^{-1}      \highlight{     B(\id_X \triangleright L^{\cH_1}_{Z,m_1} \times \id_Y \triangleright \id_V \triangleright \cH_2(m_2)\triangleleft \id_W) } \\ 
&\highlight{B(\id_X \triangleright\id_{\cF(Z)} \triangleright \id_{\cH_1(m_1)} \times {L^{\cH_2}_{\cF^{-1}(Y\otimes V), m_2}}^{-1} \triangleleft \id_W) }    R^B_{X\triangleright \cF(Z) \triangleright \cH_1(m_1) \times \cH_2( \cF^{-1}(Y\otimes V) \triangleright m_2),W} \\
&[L^B_{X,\cF(Z)\triangleright \cH_1(m_1) \times \cH_2(\cF^{-1}(Y\otimes V) \triangleright m_2)} \triangleleft \id_W][\id_X \triangleright L^B_{X\otimes \cF(Z), \cH_1(m_1) \times \cH_2(\cF^{-1}(Y\otimes V) \triangleright m_2)} \triangleleft \id_W] \\
&=  {b^{B}_{X \triangleright \cH_1(Z \triangleright m_1),Y,V \triangleright \cH_2(m_2) \triangleleft W}}^{-1}B(\id_X \triangleright \id_{\cH_1(Z\triangleright m_1)} \times {L^{\cH_2}_{\cF^{-1}(Y\otimes V),m_2}}^{-1} \triangleleft \id_W )  \\
&\highlight{B(\id_X \triangleright L^{\cH_1}_{Z,m_1} \times \id_{\cH_2(\cF^{-1}(Y\otimes V)\triangleright m_2)} \triangleleft \id_W) R^B_{X\triangleright \cF(Z) \triangleright \cH_1(m_1) \times \cH_2( \cF^{-1}(Y\otimes V) \triangleright m_2),W}} \\
&[L^B_{X,\cF(Z)\triangleright \cH_1(m_1) \times \cH_2(\cF^{-1}(Y\otimes V) \triangleright m_2)} \triangleleft \id_W][\id_X \triangleright L^B_{X\otimes \cF(Z), \cH_1(m_1) \times \cH_2(\cF^{-1}(Y\otimes V) \triangleright m_2)} \triangleleft \id_W] \\
&=  {b^{B}_{X \triangleright \cH_1(Z \triangleright m_1),Y,V \triangleright \cH_2(m_2) \triangleleft W}}^{-1}B(\id_X \triangleright \id_{\cH_1(Z\triangleright m_1)} \times {L^{\cH_2}_{\cF^{-1}(Y\otimes V),m_2}}^{-1} \triangleleft \id_W )\\
&R^B_{X\triangleright \cH_1(Z\triangleright m_1) \times \cH_2(\cF^{-1}(Y\otimes V) \triangleright m_2),W}     \highlight{  [B(\id_X \triangleright L^{\cH_1}_{Z,m_1} \times \id_{\cH_2(\cF^{-1}(Y\otimes V )\triangleright m_2)}) \triangleleft \id_W]} \\
&\highlight{ [L^B_{X,\cF(Z)\triangleright \cH_1(m_1) \times \cH_2(\cF^{-1}(Y\otimes V) \triangleright m_2)} \triangleleft \id_W]}[\id_X \triangleright L^B_{X\otimes \cF(Z), \cH_1(m_1) \times \cH_2(\cF^{-1}(Y\otimes V) \triangleright m_2)} \triangleleft \id_W] \\
&=  {b^{B}_{X \triangleright \cH_1(Z \triangleright m_1),Y,V \triangleright \cH_2(m_2) \triangleleft W}}^{-1}B(\id_X \triangleright \id_{\cH_1(Z\triangleright m_1)} \times {L^{\cH_2}_{\cF^{-1}(Y\otimes V),m_2}}^{-1} \triangleleft \id_W ) \\
&R^B_{X\triangleright \cH_1(Z\triangleright m_1) \times \cH_2(\cF^{-1}(Y\otimes V) \triangleright m_2),W}      [ L^B_{X,\cH_1(Z\triangleright m_1)\times \cH_2( \cF^{-1}(Y\otimes V) \triangleright m_2)} \triangleleft \id_W]                         \\
&           [\id_X \triangleright B(L^{\cH_1}_{Z,m_1} \times \id_{\cH_2(\cF^{-1}(Y\otimes V)\triangleright m_2)})\triangleleft \id_W][\id_X \triangleright L^B_{X\otimes \cF(Z), \cH_1(m_1) \times \cH_2(\cF^{-1}(Y\otimes V) \triangleright m_2)} \triangleleft \id_W] \\
&= \omega^\boxtimes_{X\times Z\triangleright m_1\times Y \times V\times m_2\times W}\cdot b^{\overline{B(\cH_1\times \cH_2)}}_{X,Z,m_1,Y,V,m_2,W}.
\end{align*}

\textbf{Proof of statement ~\ref{thm:part3}:}  

For this statement we are required to make a choice of bimodule equivalences
\[     S_{\cF_2,\cF_1}: \cC_{\cF_2}\boxtimes \cC_{\cF_1} \to \cC_{\cF_2\circ \cF_1} \quad \text{ and } \quad S^*_{\cF_1,\cF_2}: _{\cF_1}\cC \boxtimes  _{\cF_2}\cC \to _{\cF_2\circ \cF_1}\cC.\] 

We choose the bimodule equivalences induced from the $\cC$-balanced functors:
\[   s_{\cF_2,\cF_1} : X\boxtimes Y \mapsto X\otimes \cF_2(Y)\quad  \text{ and } \quad s^*_{\cF_1,\cF_2}: X\boxtimes Y \mapsto \cF_2(X)\otimes Y.\]
respectively. The balancing morphisms for these two functors are given by:
\begin{align*}
\id_X \otimes \tau^{\cF_2}_{Z,Y} &: X\otimes \cF_2(Z)\otimes \cF_2(Y) \to X\otimes \cF_2(Z\otimes Y), \text{ and } \\
{\tau^{\cF_2}_{X,Z}}^{-1}\otimes \id_Y& : \cF_2(X\otimes Z) \otimes Y \to \cF_2(X) \otimes \cF_2(Z) \otimes Y.
\end{align*}

Our goal is to construct a bimodule natural isomorphism
\[ \Omega^\circ_{\cH_1,\cH_2}:\widehat{\cH_2\circ \cH_1}\circ(S_{\cF_2,\cF_1} \boxtimes \Id_{\cM} \boxtimes S^*_{\cF_1,\cF_2}) \simeq  \widehat{\cH_2} \circ (\Id_{\cC_{\cF_2}} \boxtimes \widehat{\cH_1} \boxtimes \Id_{_{\cF_2}\cC}), \]
which is equivalent under the equivalence
\[ \operatorname{Fun}^{\text{bal}}( \cC_{\cF_2} \times\cC_{\cF_1} \times \cM \times _{\cF_1}\cC \times _{\cF_2}\cC \to \cN) \simeq \operatorname{Fun}( \cC_{\cF_2} \boxtimes\cC_{\cF_1} \boxtimes \cM \boxtimes _{\cF_1}\cC \boxtimes _{\cF_2}\cC \to \cN)     \] 
to constructing a $\cC$-balanced natural isomorphism:
\[ \omega^\circ_{\cH_1,\cH_2}: \overline{\cH_2\circ \cH_1}\circ(s_{\cF_2,\cF_1} \times \Id_{\cM} \times s^*_{\cF_1,\cF_2}) \to  \overline{\cH_2} \circ (\Id_{\cC_{\cF_2}} \times \overline{\cH_1} \times \Id_{_{\cF_2}\cC}). \]

Our choice for $\omega^\circ_{\cH_1,\cH_2}$  has components (we drop the $\cH_1$, $\cH_2$ subscript for readability):
\begin{align*}
\omega^\circ_{X\times Y \times m \times V \times W}&:=[\id_X \triangleright R^{\cH_2}_{Y\triangleright \cH_1(m),V} \triangleleft \id_W][\id_X \triangleright L^{\cH_2}_{Y,\cH_1(m)}\triangleleft \id_{\cF(V)} \triangleleft \id_W] \\ &:  (X\otimes \cF_2(Y)) \triangleright \cH_2(\cH_1(m)) \triangleleft ( \cF_2(V) \otimes W) \to X \triangleright \cH_2(Y\triangleright \cH_1(m) \triangleleft V) \triangleleft W.
\end{align*}

We only verify that $\omega^\circ_{\cH_1,\cH_2}$ is $\cC$-balanced in the first position and leave the other three positions to the reader.

\begin{align*}
&\omega^\circ_{X,Z\triangleright Y,m,V,W}\cdot b_{X,Z,Y,m,V,W}^{\overline{\cH_2\circ \cH_1}\circ ( s_{\cF_2,\cF_1} \times \id_\cM \times s^*_{\cF_1,\cF_2})}  \\
& = [\id_X \triangleright R^{\cH_2}_{Z\triangleright Y\triangleright \cH_1(m),V} \triangleleft \id_W]\highlight{[\id_X \triangleright L^{\cH_2}_{Z\otimes Y,\cH_1(m)}\triangleleft \id_{\cF_2(V)} \triangleleft \id_W][\id_X \triangleright \tau^{\cF_2}_{Z,Y} \triangleright \id_{\cH_2(\cH_1(m))} \triangleleft \id_{\cF_2(V)} \triangleleft \id_W]} \\
& = \highlight{[\id_X \triangleright R^{\cH_2}_{Z\triangleright Y\triangleright \cH_1(m),V} \triangleleft \id_W][\id_X \triangleright L^{\cH_2}_{Z, Y\triangleright \cH_1(m)} \triangleleft \id_{\cF_2(V)} \triangleleft \id_W]}[\id_X \triangleright \id_{\cF_2{Z}} \triangleright L^{\cH_2}_{Y,\cH_1(m)} \triangleleft  \id_{\cF_2(V)} \triangleleft \id_W] \\
&=  [\id_X \triangleright L^{\cH_2}_{Z, Y\triangleright \cH_1(m) \triangleleft V} \triangleleft \id_W][\id_X \triangleright \id_{\cF_2(Z)} \triangleright R^{\cH_2}_{Y\triangleright \cH_1(m), V} \triangleleft \id_W][\id_X \triangleright \id_{\cF_2{Z}} \triangleright L^{\cH_2}_{Y,\cH_1(m)} \triangleleft  \id_{\cF_2(V)} \triangleleft \id_W] \\
&=    b_{X,Z,Y,m,V,W}^{\overline{\cH_2}\circ (\Id_{\cC_{\cF_2}} \times \overline{\cH_1}\times \Id_{_{\cF_2}\cC}) }\cdot \omega^\circ_{X\triangleleft Z, Y,m,V,W}.
\end{align*}

\textbf{Proof of statement~\ref{thm:part4}:}

Let $\cH_1, \cH_2 : \cM \to \cN$ be $\cF$-twisted bimodule functors. Our goal is to construct a bijection between the set of $\cF$-twisted bimodule natural isomorphisms $\cH_1 \to \cH_2$, and the set of untwisted bimodule natural isomorphisms $\widehat{ \cH_1} \to \widehat{\cH_2}$. We achieve this by constructing a bijection between the set of $\cF$-twisted bimodule natural isomorphisms $\cH_1 \to \cH_2$, and the set of $\cC$-balanced natural isomorphisms $\overline{ \cH_1} \to \overline{\cH_2}$, which then induces the desired bijection through the equivalence
\[ \operatorname{Fun}^{\text{bal}}( \cC_\cF \times \cM \times _\cF\cC \to \cN  ) \simeq \operatorname{Fun}( \cC_\cF \boxtimes \cM \boxtimes _\cF\cC \to \cN  ) .  \]

Given  $\mu: \cH_1 \to \cH_2$ a natural isomorphism of $\cF$-twisted bimodule functors, we need to construct a $\cC$-balanced natural isomorphism $\overline{\mu}: \overline{\cH_1} \to \overline{\cH_2}$. We claim that the natural isomorphism
\begin{equation*}
\overline{\mu}_{X,m,Y} = \id_X \triangleright \mu_m \triangleleft \id_Y,
\end{equation*}
is $\cC$-balanced in all positions. To see this we directly compute (only in the first position, and leave the rest to the reader)
\begin{align*}
b^{\cH_2}_{X,Z,m,Y}\overline{\mu}_{X \triangleleft Z, m , Y} =& \highlight{[\id_X \triangleright L^{\cH_2}_{Z,M} \triangleleft \id_Y][\id_{X\triangleleft Z} \triangleright \mu_m \triangleleft \id_Y)]} \\
=& \id_X \triangleright [L^{\cH_2}_{Z,M}(\id_{\cF(Z)}\triangleright \mu_m)] \triangleleft \id_Y \\
=&  \id_X \triangleright \highlight{[\mu_{Z\triangleright m}L^{\cH_1}_{Z,m}]} \triangleleft \id_Y \\ 
=& \overline{\mu}_{X,Z\triangleright m,Y}b^{\cH_1}_{X,Z,m,Y}. 
\end{align*}
Thus the map $\mu \mapsto \overline{\mu}$ gives a map from the set of $\cF$-twisted bimodule natural isomorphisms $\cH_1 \to \cH_2$, to the set of $\cC$-balanced natural isomorphisms $\overline{ \cH_1} \to \overline{\cH_2}$.

We now give an inverse to the map $\mu \mapsto \overline{\mu}$. Let $\nu : \overline{\cH_1} \to \overline{\cH_2}$ a $\cC$-balanced natural isomorphism. We define a natural transformation $ \underline{\nu}: \cH_1 \to \cH_2$ by
\[ \underline{\nu}_m := \nu_{\mathbf{1} , m , \mathbf{1}}.   \]
It is straightforward to verify that $\underline{\nu}$ is a natural isomorphism of $\cF$-twisted functors. It also follows immediately that $\underline{(\overline{\mu})}_m = \mu_m$ and $\overline{(\underline{\nu})}_{X,m,Y} = \nu_{X,m,Y}$. Thus the map $\mu \mapsto \overline{\mu}$ is bijective, and hence induces a bijective map $\mu \mapsto \widehat{\mu}$ from the set of $\cF$-twisted bimodule natural isomorphisms $\cH_1 \to \cH_2$, to the set of untwisted bimodule natural isomorphisms $\widehat{ \cH_1} \to \widehat{\cH_2}$.

\textbf{Proof of statement~\ref{thm:part5}:}

Via the equivalence
\[ \operatorname{Fun}^{\text{bal}}( \cC_\cF \times \cM_1 \times _\cF\cC \times  \cC_\cF \times \cM_2 \times _\cF\cC \to \cN_1\boxtimes \cN_2) \simeq \operatorname{Fun}( \cC_\cF \boxtimes \cM_1 \boxtimes _\cF\cC \boxtimes  \cC_\cF \boxtimes \cM_2 \boxtimes _\cF\cC \to \cN_1\boxtimes \cN_2)   \]
we have that this statement is equivalent to showing that the $\cC$-balanced natural isomorphisms:
\[ \omega^\boxtimes_{\cH_1',\cH_2'}[\overline{\mu_1 \times \mu_2}\circ (\id_{\Id_{\cC_\cF}} \times \id_{\Id_{\cM_1}}  \times \id_{\operatorname{R_{_\cF\cC}}} \times \id_{\Id_{\cM_2}} \times \id_{\Id_{_\cF\cC}})] \]
and
\[  [ \overline{\mu_1} \times \overline{\mu_2}]\omega^\boxtimes_{\cH_1,\cH_2} \]
are equal. 

Let 
\[ X\times m_1 \times Y\times V \times m_2 \times W \in \cC_\cF \times \cM_1 \times _\cF\cC \times \cC_\cF \times \cM_2 \times _\cF\cC,\]
 then
\begin{align*}
 &\left(\omega^\boxtimes_{\cH_1',\cH_2'}[\overline{\mu_1 \times \mu_2}\circ (\id_{\Id_{\cC_\cF}} \times \id_{\Id_{\cM_1}}  \times \id_{\operatorname{R_{_\cF\cC}}} \times \id_{\Id_{\cM_2}} \times \id_{\Id_{_\cF\cC}})]\right)_{X\times m_1 \times Y\times V \times m_2 \times W} \\
&=  {b^{B}_{X\triangleright \cH_1'(m_1),Y,V\triangleright \cH_2'(m_2) \triangleleft W}}^{-1}  B(\id_X \triangleright \id_{\cH_1'(m_1)} \times {L^{\cH_2'}_{\cF^{-1}(Y\otimes V), m_2}}^{-1} \triangleleft \id_W) \highlight{ R^{B}_{X \triangleright \cH_1'(m_1) \times \cH_2'( \cF^{-1}(Y\otimes V) \triangleright m_2),W}}\\
&\highlight{[L^{B}_{X, \cH_1(m_1) \times \cH_2(\cF^{-1}(Y\otimes V) \triangleright m_2)} \triangleleft \id_W][\id_X \triangleright B( {\mu_1}_{m_1} \times {\mu_2}_{\cF^{-1}(Y\otimes V) \triangleright m_2}) \triangleleft \id_W]}  \\
&= {b^{B}_{X\triangleright \cH_1'(m_1),Y,V\triangleright \cH_2'(m_2) \triangleleft W}}^{-1}	\highlight{B(\id_X \triangleright \id_{\cH_1'(m_1)} \times { L^{\cH_2'}_{\cF^{-1}(Y\otimes V), m_2}}^{-1}  \triangleleft \id_W)B( \id_X \triangleright {\mu_1}_{m_1} \times {\mu_2}_{\cF^{-1}(Y\otimes V)\triangleright m_2} \triangleleft \id_W)} \\
& R^B_{X\triangleright \cH_1(m_1) \times \cH_2(\cF^{-1}(Y\otimes V)\triangleright m_2),W}[L^B_{X,  \cH_1(m_1) \times \cH_2( \cF^{-1}(Y\otimes V) \triangleright m_2)} \triangleleft \id_W] \\
&= \highlight{{b^{B}_{X\triangleright \cH_1'(m_1),Y,V\triangleright \cH_2'(m_2) \triangleleft W}}^{-1} B(\id_W \triangleright {\mu_1}_{m_1} \times \id_Y \triangleright \id_V \triangleright {\mu_2}_{m_2} \triangleleft \id_W ) }     B( \id_X \triangleright \id_{\cH_1(m_1)} \times {L^{\cH_2}_{\cF^{-1}(Y\otimes V), m_2}}^{-1} \triangleleft \id_W) \\
& R^B_{X\triangleright \cH_1(m_1) \times \cH_2(\cF^{-1}(Y\otimes V)\triangleright m_2),W}[L^B_{X,  \cH_1(m_1) \times \cH_2( \cF^{-1}(Y\otimes V) \triangleright m_2)} \triangleleft \id_W] \\
&=B(\id_X \triangleright {\mu_1}_{m_1} \triangleleft \id_Y \times  \id_V \triangleright {\mu_2}_{m_2} \triangleleft \id_W )  {b^{B}_{X\triangleright \cH_1(m_1), Y, V\triangleright \cH_2(m_2) \triangleleft W  }}^{-1}      B( \id_X \triangleright \id_{\cH_1(m_1)} \times {L^{\cH_2}_{\cF^{-1}(Y\otimes V), m_2}}^{-1} \triangleleft \id_W) \\
& R^B_{X\triangleright \cH_1(m_1) \times \cH_2(\cF^{-1}(Y\otimes V)\triangleright m_2),W}[L^B_{X,  \cH_1(m_1) \times \cH_2( \cF^{-1}(Y\otimes V) \triangleright m_2)} \triangleleft \id_W] \\
&=  \left(( \overline{\mu_1} \times \overline{\mu_2})\omega^\boxtimes_{\cH_1,\cH_2}\right)_{X\times m_1 \times Y\times V \times m_2 \times W}.
\end{align*}

\textbf{Proof of statement~\ref{thm:part6}:} 

Via the equivalence
\[ \operatorname{Fun}^{\text{bal}}( \cC_{\cF_2} \times\cC_{\cF_1} \times \cM \times _{\cF_1}\cC \times _{\cF_2}\cC \to \cN) \simeq \operatorname{Fun}( \cC_{\cF_2} \boxtimes\cC_{\cF_1} \boxtimes \cM \boxtimes _{\cF_1}\cC \boxtimes _{\cF_2}\cC \to \cN)     \] 
we have that this statement is equivalent to showing that the $\cC$-balanced natural isomorphisms:
\[ \omega^\circ_{\cH_1',\cH_2'}[ \overline{\mu_2\circ\mu_1} \circ (\id_{s_{\cF_2,\cF_1}\times \id_{\Id_\cM} \times \id_{s^*_{\cF_1,\cF_2}}})] \]
and 
\[ [\overline{\mu_2} \circ ( \id_{\Id_{\cC_{\cF_2}}} \times \overline{\mu_1} \times \id_{\Id_{_{\cF_2}\cC}})]\omega^\circ_{\cH_1,\cH_2} \]
are equal. Let 
\[ X\times Y\times m\times V\times W \in \cC_{\cF_2}\times \cC_{\cF_1} \times \cM \times _{\cF_1}\cC \times _{\cF_2}\cC,\] 
then
\begin{align*}
&\left(\omega^\circ_{\cH_1',\cH_2'}[ \overline{\mu_2\circ\mu_1} \circ (\id_{s_{\cF_2,\cF_1}\times \id_{\Id_\cM} \times \id_{s^*_{\cF_1,\cF_2}}})]\right)_{X\times Y\times m\times V\times W} \\
&=[\id_X \triangleright R^{\cH_2'}_{Y\triangleright \cH_1'(m),V} \triangleleft \id_W]\highlight{[\id_X \triangleright L^{\cH_2'}_{Y,\cH_1'(m)}\triangleleft \id_{\cF(V)} \triangleleft \id_W][\id_X \triangleright \id_{\cF(Y)} \triangleright {\mu_2}_{\cH_1'(m)} \triangleleft \id_{\cF(V)} \triangleleft \id_W]}\\
&[\id_X \triangleright \id_{\cF(Y)} \triangleright \cH_2({\mu_1}_m) \triangleleft \id_{\cF(V)} \triangleleft \id_W] \\
&= \highlight{[\id_X \triangleright R^{\cH_2'}_{Y\triangleright \cH_1'(m),V} \triangleleft \id_W] [\id_X\triangleright {\mu_2}_{Y\triangleright \cH_1'(m)} \triangleleft \id_{\cF(V)} \triangleleft W]}[ \id_X \triangleright L^{\cH_2}_{Y,\cH_1'(m)} \triangleleft \id_{\cF(V)} \triangleleft \id_W         ]  \\
&[\id_X \triangleright \id_{\cF(Y)} \triangleright \cH_2({\mu_1}_m) \triangleleft \id_{\cF(V)} \triangleleft \id_W] \\
&= [\id_X \triangleright {\mu_2}_{Y\triangleright \cH_1'(m)\triangleleft V} \triangleleft \id_{W}]\highlight{[\id_X \triangleright R^{\cH_2}_{Y\triangleright \cH_1'(m),V} \triangleleft \id_W][ \id_X \triangleright L^{\cH_2}_{Y,\cH_1'(m)} \triangleleft \id_{\cF(V)} \triangleleft \id_W         ]} \\
& \highlight{ [\id_X \triangleright \id_{\cF(Y)} \triangleright \cH_2({\mu_1}_m) \triangleleft \id_{\cF(V)} \triangleleft \id_W]}\\
&= [\id_X \triangleright {\mu_2}_{Y\triangleright \cH_1'(m)\triangleleft V} \triangleleft \id_W][\id_X \triangleright \cH_2(\id_Y \triangleright {\mu_1}_m \triangleleft \id_V )\triangleleft \id_W][\id_X \triangleright R^{\cH_2}_{Y\triangleright \cH_1(m),V} \triangleleft \id_W]  \\
&[\id_X \triangleright L^{\cH_2}_{Y,\cH_1(m)}\triangleleft \id_{\cF(V)} \triangleleft \id_W] \\
&=\left({[\overline{\mu_2} \circ ( \id_{\Id_{\cC_{\cF_2}}} \times \overline{\mu_1} \times \id_{\Id_{_{\cF_2\cC}}})]\omega^\circ_{\cH_1,\cH_2}}\right)_{X\times Y\times m\times V\times W}.
\end{align*}

\textbf{Proof of statement~\ref{thm:part7}:}

To show that $\widehat{\mu_1\mu_2}$ and $\widehat{\mu_1}\cdot \widehat{\mu_2}$ are equal we invoke the equivalence
\[ \operatorname{Fun}^{\text{bal}}( \cC_\cF \times \cM \times _\cF\cC \to \cN  ) \simeq \operatorname{Fun}( \cC_\cF \boxtimes \cM \boxtimes _\cF\cC \to \cN  ) \]
and show that the $\cC$-balanced natural isomorphisms $\overline{\mu_1\mu_2}$ and $\overline{\mu_1}\cdot \overline{\mu_2}$ are equal. We compute
\begin{align*}
\overline{\mu_2\mu_1}_{X,m,Y} &= \highlight{\id_X \triangleright {\mu_2}_m {\mu_1}_m \triangleleft \id_Y} \\
&= [\id_X \triangleright {\mu_2}_m \triangleleft \id_Y][\id_X \triangleright {\mu_1}_m \triangleleft \id_Y] \\
&= \overline{\mu_2}_{X,m,Y} \overline{\mu_1}_{X,m,Y}.
\end{align*}
\end{proof}
\bibliography{bibliography} 
\bibliographystyle{plain}

\end{document}